\definecolor{myurlcolor}{rgb}{0,0,0.7}
\newtheorem{theorem}{Theorem}
\newtheorem{lemma}[theorem]{Lemma}
\newtheorem{proposition}[theorem]{Proposition}
\newtheorem{corollary}[theorem]{Corollary}
\newtheorem{defn}[theorem]{Definition}
\newtheorem*{theorem*}{Theorem}
\newtheorem*{definition*}{Definition}
\newtheorem*{corollary*}{Corollary}
\newtheorem*{proposition*}{Proposition}
\newtheorem*{example*}{Example}
\newenvironment{customthm}[1]
  {\innercustomthm}
  {\endinnercustomthm}
\newcommand{\Top}{\mathrm{Top}}
\newcommand{\Fin}{\mathrm{Fin}}
\newcommand{\Set}{\mathrm{Set}}
\newcommand{\Vect}{\mathrm{Vect}}
\newcommand{\NTop}{\Top^\N}
\newcommand{\Op}{\mathrm{Op}}
\newcommand{\Com}{\mathrm{Com}}
\newcommand{\Assoc}{\mathrm{Assoc}}
\newcommand{\Phyl}{\mathrm{Phyl}}
\newcommand{\Tree}{\mathrm{Tree}}
\newcommand{\PTree}{\mathrm{PTree}}
\newcommand{\R}{\mathbb{R}}
\newcommand{\N}{\mathbb{N}}
\newcommand{\define}[1]{{\bf \boldmath{#1}}}
\newcommand{\T}{\mathscr{T}}
\newcommand{\maps}{\colon}
\newcommand{\End}{\mathrm{End}}
\newcommand{\iso}{\cong}
\newcommand{\tensor}{\otimes}
\newcommand{\op}{\mathrm{op}}
\newcommand{\CAT}{\mathrm{CAT}}
\newcommand{\inc}{\mathrm{in}}
\newcommand{\wco}{W}
\newcommand{\Coend}{\mathrm{Coend}}
\newcommand\ellipsefoci[4]{
  \path[#1] let \p1=(#2), \p2=(#3), \p3=($(\p1)!.5!(\p2)$)
  in \pgfextra{
    \pgfmathsetmacro{\angle}{atan2(\x2-\x1,\y2-\y1)}
    \pgfmathsetmacro{\focal}{veclen(\x2-\x1,\y2-\y1)/2/1cm}
    \pgfmathsetmacro{\lentotcm}{\focal*2*#4}
    \pgfmathsetmacro{\axeone}{(\lentotcm - 2 * \focal)/2+\focal}
    \pgfmathsetmacro{\axetwo}{sqrt((\lentotcm/2)*(\lentotcm/2)-\focal*\focal}
  }
  (\p3) ellipse[x radius=\axeone cm,y radius=\axetwo cm, rotate=\angle];
}
\title{Operads and Phylogenetic Trees}
\date{July 28, 2017}                     
\begin{document}

\author{John Baez}
\address{Department of Mathematics, University of California, Riverside CA 92521, USA \\ and Centre for Quantum Technologies, National University of Singapore, 
         Singapore 117543}
\email{baez@math.ucr.edu}
\author{Nina Otter}
\address{Mathematical Institute, University of Oxford, Oxford OX2 6GG, UK}
\email{otter@maths.ox.ac.uk}

\begin{abstract} 
We construct an operad $\Phyl$ whose operations are the edge-labelled trees used in phylogenetics. This operad is the coproduct of $\Com$, the operad for commutative semigroups, and $[0,\infty)$, the operad with unary operations corresponding to nonnegative real numbers, where composition is addition.  We show that there is a homeomorphism between the space of $n$-ary operations of $\Phyl$ and $\T_n\times [0,\infty)^{n+1}$, where $\T_n$ is the space of metric $n$-trees introduced by Billera, Holmes and Vogtmann. Furthermore, we show that the Markov models used to reconstruct phylogenetic trees from genome data give coalgebras of $\Phyl$.   These always extend to coalgebras of the larger operad $\Com + [0,\infty]$, since Markov processes on finite sets converge to an equilibrium as time approaches infinity.  We show that for any operad $O$, its coproduct with $[0,\infty]$ contains the operad $\wco(O)$ constucted by Boardman and Vogt.  To prove these results, we explicitly describe the coproduct of operads in terms of labelled trees.
 \end{abstract}

\maketitle

 \section{Introduction}
\label{sec:intro}
 
Trees are important, not only in mathematics, but also biology.  The most important is the `tree of life' relating all organisms that have ever lived on Earth.  Darwin drew this sketch of it in 1837:

\vskip 1em

\begin{center}
\href{http://en.wikipedia.org/wiki/Tree_of_life_\%28biology\%29\#Darwin.27s_tree_of_life}{ 
\raisebox{-0.0 em}{\includegraphics[scale = 0.25]{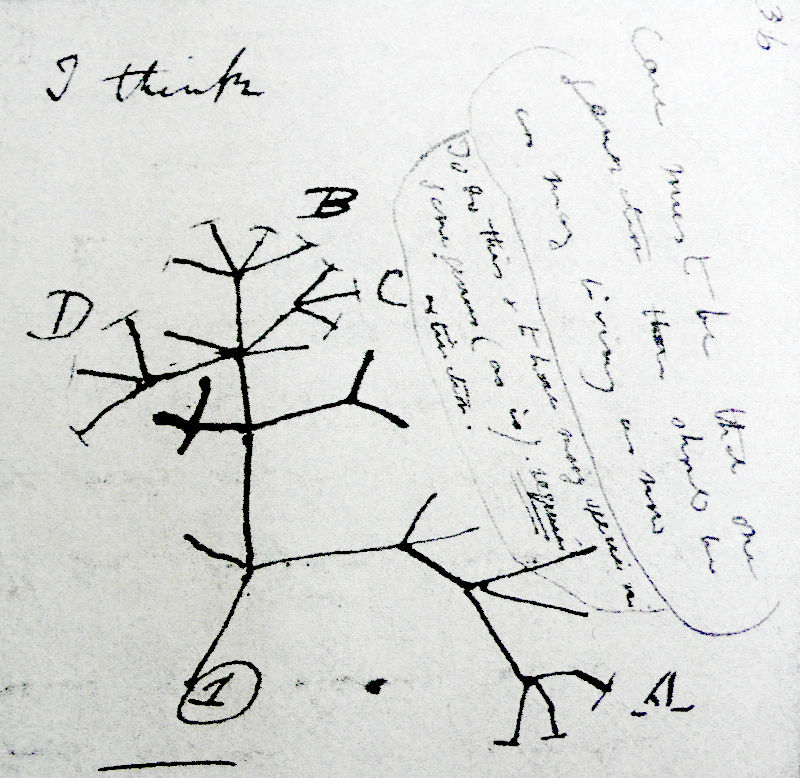}} 
}
\end{center}

\vskip 1em
He wrote about it in \textsl{On the Origin of Species} \cite{Dar}:

\begin{quote}
The affinities of all the beings of the same class have sometimes been represented by a great tree. I believe this simile largely speaks the truth. The green and budding twigs may represent existing species; and those produced during former years may represent the long succession of extinct species. At each period of growth all the growing twigs have tried to branch out on all sides, and to overtop and kill the surrounding twigs and branches, in the same manner as species and groups of species have at all times overmastered other species in the great battle for life. 
\end{quote}

Now we know that the tree of life is not really a tree in the mathematical
sense \cite{Doo}.  One reason is `endosymbiosis': the incorporation
of one organism together with its genetic material into another, as
probably happened with the mitochondria in our cells and also the
plastids that hold chlorophyll in plants.  Another is `horizontal gene
transfer': the passing of genetic material from one organism to
another, which happens frequently with bacteria.  So, the tree of life
is really a thicket, as shown in this figure \cite{Sme}:

\vskip 1em
\begin{center}
\href{http://en.wikipedia.org/wiki/Horizontal_gene_transfer}{ 
\includegraphics[scale = 1.5]{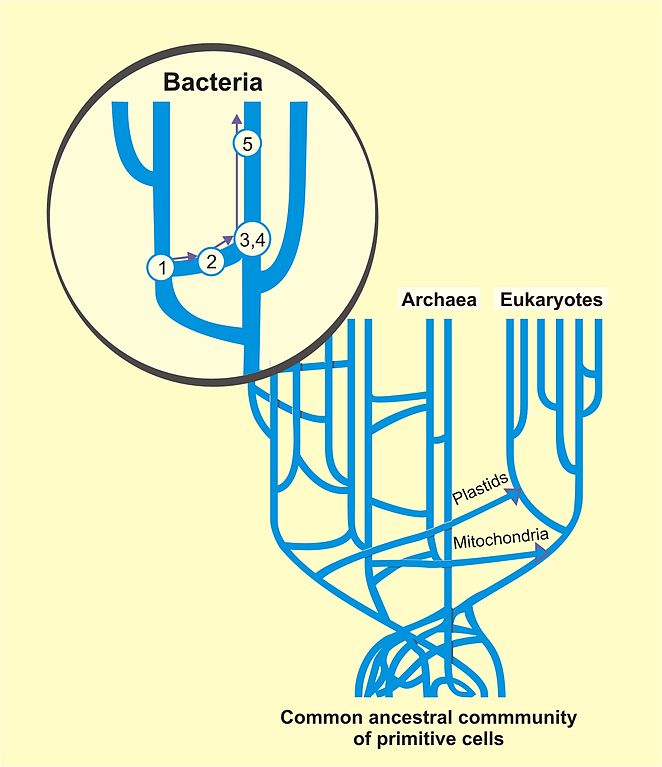}
}
\end{center}

\vskip 1em
In addition, the concept of `species' is imprecise and hotly debated \cite{Hey}.
Nonetheless, a tree with species as branches is a widely used approximation to the 
complex reality of evolution, especially for animals and plants in the last few hundred million years. Thus, biologists who try to infer phylogenetic trees from present-day genetic data often use simple models where:

\begin{itemize}
\item 
the genotype of each species follows a random walk, but  
\item 
species branch in two at various times.   
\end{itemize}

These are called `Markov models'.  The simplest Markov model for DNA evolution is the Jukes--Cantor model \cite{JC}.  Consider one or more pieces of DNA having a total of $N$ base pairs.  We can think of this as a string of letters chosen from the set \{A,T,C,G\}:

\begin{center}
   $\cdots$ ATCGATTGAGCTCTAGCG $\cdots$
\end{center}

\noindent
As time passes, the Jukes--Cantor model says the DNA changes randomly, with each base pair having the same constant rate of randomly flipping to any other.  So, we get a Markov process on the set 
\[X = \{\textrm{A,T,C,G}\}{}^N \]
However, a species can also split in two.  So, given current-day genetic data from
various species, biologists try to infer the most probable tree where, starting
from a common ancestor, the DNA in question undergoes a random walk most of the time but branches in two at certain times.   

To formalize this, we can define a concept of `phylogenetic tree'.   Our work is based on the definition of Billera, Holmes and Vogtmann \cite{BHV}, though we use a slightly different definition, for reasons that will soon become clear.  For us, a phylogenetic tree is a rooted tree with leaves labelled by numbers $1,2, \dots, n$ and edges labelled by `times' or, geometrically speaking, `lengths' in $[0, \infty)$.    We require that:
\begin{itemize}
\item the length of every edge is positive, except perhaps for `external edges': that is, edges incident to the leaves or root;
\item there are no 1-ary vertices.
\end{itemize}
For example, here is a phylogenetic tree with 5 leaves:
\[
\begin{tikzpicture}
\node at (0,1) {$\bullet$}; 
\node at (2,3){$\bullet$};
\node at (-0.65,3) {$\ell_1$};
\node at (0.25,3) {$\ell_2$};
\node at (1.2,3.5) {$\ell_3$};
\node at (1.8,3.7) {$\ell_4$};
\node at (2.9,3.5) {$\ell_5$};
\node at (1.1,1.7){$\ell_7$};
\node at (0.3,0.5) {$\ell_6$};

\node at (-1.5,4.3){$3$};
\node at (0,4.3){$1$};
\node at (1,4.3){$4$};
\node at (2,4.3){$5$};
\node at (3.1,4.3){$2$};
\node at (0,-0.3){$0$};

\path[-,font=\scriptsize]
(0,0) edge (0,1)
(0,1) edge (-1.5,4)
(0,1) edge (0,4)
(0,1) edge (1,2)
(2,3) edge (1,4)
(2,3) edge (2,4)
(1,2) edge (3,4);
\end{tikzpicture}
\]
where $\ell_1, \dots, \ell_6 \ge 0$ but we demand that $\ell_7 > 0$.    We draw the vertices as dots.  We do not count the leaves and the root as vertices, and we label the root with the number $0$.  We cannot collapse edges of length zero that end at  leaves, since doing so would eliminate those leaves.   Also note that the embedding of the tree in the plane is irrelevant, so this counts as the same phylogenetic tree:
\[
\begin{tikzpicture}
\node at (0,1) {$\bullet$}; 
\node at (2,3){$\bullet$};
\node at (-0.65,3) {$\ell_2$};
\node at (0.25,3) {$\ell_1$};
\node at (1.2,3.5) {$\ell_3$};
\node at (1.8,3.7) {$\ell_4$};
\node at (2.9,3.5) {$\ell_5$};
\node at (1.1,1.7){$\ell_7$};
\node at (0.3,0.5) {$\ell_6$};

\node at (-1.5,4.3){$1$};
\node at (0,4.3){$3$};
\node at (1,4.3){$4$};
\node at (2,4.3){$5$};
\node at (3.1,4.3){$2$};
\node at (0,-0.3){$0$};

\path[-,font=\scriptsize]
(0,0) edge (0,1)
(0,1) edge (-1.5,4)
(0,1) edge (0,4)
(0,1) edge (1,2)
(2,3) edge (1,4)
(2,3) edge (2,4)
(1,2) edge (3,4);
\end{tikzpicture}
\]

While the phylogenetic trees that we consider here are rooted, `unrooted' trees, i.e.\ trees without a specified root, are also used in phylogenetics \cite[Chap.\ 3]{BS}. Biologists use such trees to represent uncertainty about  the direction in which the evolution occurred among the species. 

In applications to biology, we are often interested in trees where the total distance from the root to the leaf is the same for every  leaf, since all species have evolved for the same time from their common ancestor. These are mathematically interesting as well, because then the distance between any two leaves defines an ultrametric on the set of leaves \cite{RTV}.  However, more general phylogenetic trees are also interesting---and they become essential when we construct an operad whose operations are phylogenetic trees.

Let $\Phyl_n$ be the set of phylogenetic trees with $n$ leaves.  This has 
a natural topology, which we explain in Section \ref{A:topology of phyl}.  For example, here is a continuous path in $\Phyl_4$ where we only change the length of one internal edge, reducing it until it becomes zero and we can collapse it:
\[
\begin{tikzpicture}[scale=0.5]
\node (1) at (0,1){$\bullet$};
\node (2) at (2,2){$\bullet$};
\node (3) at (-2,2){$\bullet$}; 
\node at (0,-1.5){$0$};
\node at (-3,4.5){$1$};
\node at (-1,4.5){$2$};
\node at (1,4.5){$3$};
\node at (3,4.5){$4$};
\node at (1.3,1.2){$1$};
\node at (-1.3,1.2){$1$};
\node at (-3.1,3){$0.6$};
\node at (-0.9,3){$0.6$};
\node at (3.1,3){$0.6$};
\node at (0.9,3){$0.6$};
\node at (-0.8,-0.1){$1.2$};
\node at (5,1.5){$\leadsto$};
\path[-,font=\scriptsize]
(0,-1) edge (0,1)
(0,1) edge (2,2)
(0,1) edge (-2,2)
(-2,2) edge (-3,4)
(-2,2) edge (-1,4)
(2,2) edge (1,4)
(2,2) edge (3,4);
\end{tikzpicture}
\begin{tikzpicture}[scale=0.5]
\node (1) at (0,1){$\bullet$};
\node (2) at (1.5,1.5){$\bullet$};
\node (3) at (-2,2){$\bullet$}; 
\node at (0,-1.5){$0$};
\node at (-3,4.5){$1$};
\node at (-1,4.5){$2$};
\node at (1,4.5){$3$};
\node at (3,4.5){$4$};
\node at (1.05,0.9){$0.5$};
\node at (-1.3,1.2){$1$};
\node at (-3.1,3){$0.6$};
\node at (-0.9,3){$0.6$};
\node at (3.1,3){$0.6$};
\node at (0.6,3){$0.6$};
\node at (-0.8,-0.1){$1.2$};
\node at (5,1.5){$\leadsto$};
\path[-,font=\scriptsize]
(0,-1) edge (0,1)
(0,1) edge (1.5,1.5)
(0,1) edge (-2,2)
(-2,2) edge (-3,4)
(-2,2) edge (-1,4)
(1.5,1.5) edge (1,4)
(1.5,1.5) edge (3,4);
\end{tikzpicture}
\begin{tikzpicture}[scale=0.5]
\node (1) at (0,1){$\bullet$};
\node (3) at (-2,2){$\bullet$}; 
\node at (0,-1.5){$0$};
\node at (-3,4.5){$1$};
\node at (-1,4.5){$2$};
\node at (1.5,4.5){$3$};
\node at (3.5,4.5){$4$};
\node at (-1.3,1.2){$1$};
\node at (-3.1,3){$0.6$};
\node at (-0.9,3){$0.6$};
\node at (3.1,3){$0.6$};
\node at (0.3,3){$0.6$};
\node at (-0.8,-0.1){$1.2$};
\path[-,font=\scriptsize]
(0,-1) edge (0,1)
(0,1) edge (-2,2)
(-2,2) edge (-3,4)
(-2,2) edge (-1,4)
(0,1) edge (1.5,4)
(0,1) edge (3.5,4);
\end{tikzpicture}
\]

Phylogenetic trees reconstructed by biologists are typically binary.  When a phylogenetic
tree appears to have higher arity, sometimes we merely lack sufficient data to resolve
a higher-arity branching into a number of binary ones \cite{PG}.  With the topology we are using on $\Phyl_n$, binary trees form an open dense set of $\Phyl_n$, except for $\Phyl_1$.   However, trees of higher arity are still important, because paths, paths of
paths, etc.\ in $\Phyl_n$ are often forced to pass through trees of higher arity.

Billera, Holmes and Vogtmann \cite{BHV} focused their attention on the set $\T_n$ of phylogenetic trees where lengths of the external edges---edges incident to the root and leaves---are fixed to a constant value.  They endow $\T_n$ with a metric, which induces a topology on $\T_n$, and we show that for $n \ne 1$ there is a homeomorphism
\[                 \Phyl_n \iso \T_n \times [0,\infty)^{n+1} , \]
where the data in $[0,\infty)^{n+1}$ describe the lengths of the external edges in a
general phylogenetic tree.

In algebraic topology, trees are often used to describe the composition of 
$n$-ary operations.   This is formalized in the theory of operads \cite{May72}.
An `operad' is an algebraic stucture where for each natural number $n=0,1,2,\dots$ we have a set $O_n$ whose elements are considered as abstract $n$-ary operations, not necessarily operating on anything yet.  An element $f \in O_n$ can be depicted as a planar tree with one vertex and $n$ labelled leaves:
\[
\begin{tikzpicture}
\node (1) at (-1,1){}; 
\node at (-1,1.3){$2$};
\node (2) at (0,1){};
\node at (0,1.3){$1$};
\node (3) at (1,1){};
\node at (1,1.3){$3$};
\node (4) at (0,0){$\bullet$};
\node at (0.5,0){$f$};
\node (5) at (0,-1){};
\node at (0,-1.3){$0$};
\path[-,font=\scriptsize]
(-1,1) edge (0,0)
(0,1) edge (0,0)
(1,1) edge (0,0)
(0,0) edge (0,-1);
\end{tikzpicture}
\]
We can compose these operations in a tree-like way to get new operations:
\[
\begin{tikzpicture}[scale = 0.8]
\node at (-2,1){}; 
\node at (-1,1){};
\node at (0,1){};
\node at (0,0){};
\node at (2,1){};
\node at (3,1){};
\node at (-2,1.3){$3$}; 

\node at (-1,1.3){$4$};
\node at (0,1.3){$1$};
\node at (1,1.3){$6$};
\node at (2,1.3){$2$};
\node at (3,1.3){$5$};
\node at (1,-2.3){$0$};
\node at (5,1.3){$3$}; 
\node at (6,1.3){$4$};
\node at (7,1.3){$1$};
\node at (8,1.3){$6$};
\node at (9,1.3){$2$};
\node at (10,1.3){$5$};
\node at (7.5,-2.3){$0$};
\node at (-1,0){$\bullet$};
\node at (-0.3,0){$g_1$};
\node at (1,0){$\bullet$};
\node at (1.4,0){$g_2$};
\node at (2.5,0){$\bullet$};
\node at (3,0){$g_3$};
\node at (1,-1){$\bullet$};
\node at (1.5,-1){$f$};
\node at (7.5,-1){$\bullet$};
\node at (9.2,-1){$f \circ (g_1, g_2, g_3)$};
\node at (4.5,-1){$=$};
\path[-,font=\scriptsize]
(-2,1) edge (-1,0)
(-1,1) edge (-1,0)
(0,1) edge (-1,0)
(2,1) edge (2.5,0)
(1,1) edge (1,0)
(3,1) edge (2.5,0)
(-1,0) edge (1,-1)
(1,0) edge (1,-1)
(2.5,0) edge (1,-1)
(1,-1) edge (1,-2)
(5,1) edge (7.5,-1)
(6,1) edge (7.5,-1)
(7,1) edge (7.5,-1)
(8,1) edge (7.5,-1)
(9,1) edge (7.5,-1)
(10,1) edge (7.5,-1)
(7.5,-1) edge (7.5,-2);
\end{tikzpicture}
\]
and an associative law holds, making this sort of composite unambiguous:
\[
\begin{tikzpicture}[scale=1.1]
\node at (-2,1){$\bullet$}; 
\node at (-1.7,1){$h_1$};
\node at (-1,1){$\bullet$};
\node at (-0.7,1){$h_2$};
\node at (0,1){$\bullet$};
\node at (0.3,1){$h_3$};
\node at (1,1){$\bullet$};
\node at (1.3,1){$h_4$};
\node at (2,1){$\bullet$};
\node at (2.3,1){$h_5$};
\node at (3,1){$\bullet$};
\node at (3.3,1){$h_6$};
\node at (-1,0){$\bullet$};
\node at (-0.3,0){$g_1$};
\node at (1,0){$\bullet$};
\node at (1.4,0){$g_2$};
\node at (2.5,0){$\bullet$};
\node at (3,0){$g_3$};
\node at (1,-1){$\bullet$};
\node at (1.5,-1){$f$};
\node at (-2.5,2.3){$4$};
\node at (-2,2.3){$1$}; 
\node at (-1.5,2.3){$3$}; 
\node at (-1,2.3){$9$}; 
\node at (-0.2,2.3){$8$}; 
\node at (0.2,2.3){$2$}; 
\node at (0.8,2.3){$6$}; 
\node at (1.2,2.3){$5$}; 
\node at (3,2.3){$7$}; 
\node at (1,-2.3){$0$};
\path[-,font=\scriptsize]
(-2.5,2) edge (-2,1)
(-2,2) edge (-2,1)
(-1.5,2) edge (-2,1)
(-1,2) edge (-1,1)
(-0.2,2) edge (0,1)
(0.2,2) edge (0,1)
(0.8,2) edge (1,1)
(1.2,2) edge (1,1)
(3,2) edge (3,1)
(-2,1) edge (-1,0)
(-1,1) edge (-1,0)
(0,1) edge (-1,0)
(2,1) edge (2.5,0)
(1,1) edge (1,0)
(3,1) edge (2.5,0)
(-1,0) edge (1,-1)
(1,0) edge (1,-1)
(2.5,0) edge (1,-1)
(1,-1) edge (1,-2);
\end{tikzpicture}
\]
\noindent
There are various kinds of operads, but in this paper our operads will always be `unital', having an operation $1 \in O_1$ that acts as an identity for composition.   They will also be `symmetric', meaning there is an action of the symmetric group $S_n$ on each set $O_n$, compatible with composition.  Further, they will be `topological', meaning that each set $O_n$ is a topological space, with composition and permutations acting as continuous maps.

In Section \ref{sec:trees} we prove that there is an operad $\Phyl$, the `phylogenetic operad', whose space of $n$-ary operations is $\Phyl_n$.  This raises a number of questions:
\begin{itemize}
\item What is the mathematical nature of this operad?
\item  How is it related to `Markov processes with branching'? 
\item How is it related to known operads in topology?
\end{itemize}
Briefly, the answer is that $\Phyl$ is the coproduct of $\Com$, the operad for commutative topological semigroups, and $[0,\infty)$, the operad having only unary 
operations, one for each $t \in [0,\infty)$.    The first describes branching, the 
second describes Markov processes.   Moreover, $\Phyl$ is closely related to the Boardmann--Vogt $W$ construction applied to $\Com$.  This is a construction
that Boardmann and Vogt applied to another operad in order to obtain an operad whose algebras are loop spaces \cite{BV}.

To understand all this in more detail, first recall that the \textit{raison d'\^etre} 
of operads is to have `algebras'.  The most traditional sort of algebra of an operad
$O$ is a topological space $X$ on which each operation $f \in O_n$ acts as a 
continuous map 
\[            \alpha(f) \maps X^n \to  X  \]
obeying some conditions: composition, the identity, and the permutation
group actions are preserved, and $\alpha(f)$ depends continuously on $f$.  
The idea is that the abstract operations in $O$ are realized as actual operations on
the space $X$.

In this paper we instead need algebras of a linear sort.  Such an algebra of $O$ is a 
finite-dimensional real vector space $V$ on which each operation $f \in O_n$ 
acts as a multilinear map
\[            \alpha(f) \maps V^n \to V  \] 
obeying the same list of conditions.  We can also think of $\alpha(f)$ as a linear
map 
\[            \alpha(f) \maps V^{\otimes n} \to V  \]
where $V^{\otimes n}$ is the $n$th tensor power of $V$.  

We also need `coalgebras' of operads.  The point is that while ordinarily one thinks of an
operation $f \in O_n$ as having $n$ inputs and one output, a phylogenetic tree is better
thought of as having one input and $n$ outputs.  A coalgebra of $O$ is a finite-dimensional real vector space $V$ on which every operation $f \in O_n$ gives a linear map
\[        \alpha(f) \maps V \to V^{\otimes n}  \]
obeying the same conditions as an algebra, but `turned around'.   More precisely, one can define algebras of an operad $O$ in any category $C$ enriched over topological spaces, and a coalgebra of $O$ in $C$ is simply an algebra of $O$ in $C^\op$.

The main point of this paper is that the phylogenetic operad has interesting coalgebras,
which correspond to how phylogenetic trees are actually used to describe 
branching Markov processes in biology.  But to understand this, we need to start
by looking at coalgebras of two operads from which the phylogenetic operad is built.

By abuse of notation, we will use $[0,\infty)$ as the name for the operad having only unary operations, one for each $t \in [0,\infty)$, with composition of operations given by addition.  \label{defn:1-param semigroup} A coalgebra of $[0,\infty)$ is a finite-dimensional real vector space $V$ together with for each $t \in [0,\infty)$ a linear map  
\[             \alpha(t) \maps V \to V  \]
such that:
\begin{itemize}
\item $\alpha(s+t) = \alpha(s) \alpha(t)$ for all $s,t \in [0,\infty)$,
\item $\alpha(0) = 1_V$,
\item $\alpha(t)$ depends continuously on $t$, where the space of linear operators from $V$ to itself is given its usual topology as a finite-dimensional real vector space.  
\end{itemize}
Analysts call such a thing a `continuous one-parameter semigroup' of operators on $V$, though category theorists might prefer to call it a continuous one-parameter monoid.

Given a finite set $X$, a `Markov process' or `continuous-time Markov chain' on $X$ is a continuous one-parameter semigroup of operators on $\R^X$ such that if $f \in \R^X$ is a probability distribution on $X$, so is $\alpha(t) f$ for all $t \in [0,\infty)$.  Equivalently, if we think of $\alpha(t)$ as an $X \times X$ matrix of real numbers, we demand that its entries be nonnegative and each column sum to 1. Such a matrix is called `stochastic'.  If $X$ is a set of possible sequences of base pairs, a Markov process on $X$ describes the random changes of DNA with the passage of time.  Any Markov process on $X$ makes $\R^X$ into a coalgebra of $[0,\infty)$.

This handles the Markov process aspect of DNA evolution; what about the branching?  
For this we use $\Com$, the unique operad with one $n$-ary operation for 
each $n > 0$.  Algebras of $\Com$ are not-necessarily-unital commutative algebras:  
there is only one way to multiply $n$ elements for $n > 0$.   

For us what matters most is that coalgebras of $\Com$ are finite-dimensional cocommutative coalgebras, not necessarily with counit.  The real-valued functions on a finite set form a commutative algebra with pointwise operations, so the dual of this vector space is a cocommutative coalgebra.  Since the set gives a basis for this vector space, we can identify
this vector space with its dual.  Thus, if $X$ is a finite set, there is a cocommutative coalgebra whose underlying vector space is $\R^X$.   The unique $n$-ary operation of $\Com$ acts as the linear map
\[  \Delta_n   \maps  \R^X \to \underbrace{\R^X  \tensor \cdots  \tensor \R^X}_{n \; \textrm{times}}  \iso \R^{X^n} \]
where
\[     \Delta_n (f)(x_1, \dots, x_n) = 
\left\{ \begin{array}{cl}  f(x) & \textrm{if } x_1 = \cdots = x_n = x  \\ \\
                                       0   & \textrm{otherwise}   
\end{array} \right.   \]
This map describes the `$n$-fold duplication' of a probability distribution $f$
on the set $X$ of possible genes, pictured as follows:
\[
\begin{tikzpicture}[scale=0.5]
\node (2) at (0,1){$\bullet$};
\node at (-1,2.5){$1$};
\node at (0,2.5){$2$};
\node at (1,2.5){$3$};
\node at (0,-0.5){$0$};
\path[-,font=\scriptsize]
(-1,2) edge (0,1)
(0,2) edge (0,1)
(1,2) edge (0,1)
(0,1) edge (0,0);
\end{tikzpicture}
\]

Next, we wish to describe how to combine the operads $[0,\infty)$ and $\Com$ to
obtain the phylogenetic operad.  Any pair of operads $O$ and $O'$ has a 
coproduct $O + O'$.  The definition of coproduct gives an easy way to understand
the algebras of $O + O'$.  Such an algebra is simply an object that is both an algebra of $O$ and an algebra of $O'$, with no compatibility conditions imposed.
One can also give an explicit construction of $O + O'$.  When $O'$ has only unary operations,
the $n$-ary operations of $O + O'$ are certain equivalence classes of trees with 
leaves labelled $\{1,\dots, n\}$, vertices labelled by
operations in $O$, and edges labelled by operations in $O'$.    

Given this, it should come as no surprise that the operad $\Phyl$ is the coproduct $\Com + [0,\infty)$.  In fact, we shall take this as a definition.  Starting from this definition, we work backwards to show that the operations of $\Phyl$ correspond to phylogenetic trees.  We prove this in Theorem \ref{thm:bijection}.  The definition of coproduct determines a topology on the spaces $\Phyl_n$, and it is a nontrivial fact that with this topology we have $\Phyl_n \cong \T_n \times [0,\infty)^{n+1}$ for $n > 1$, where $\T_n$ has the topology defined by Billera, Holmes and Vogtmann.   We prove this in Theorem \ref{thm:homeo}. 

Using the definition of the phylogenetic operad as a coproduct, it is clear that given any Markov process on a finite set $X$, the vector space $\R^X$ naturally becomes a coalgebra of this operad.  The reason is that, as we have seen, $\R^X$ is automatically a coalgebra of $\Com$,  and the Markov process makes it into a coalgebra of $[0,\infty)$.   Thus, by the universal property of a coproduct, it becomes a coalgebra of $\Phyl \iso \Com + [0,\infty)$.   We prove this in Theorem \ref{thm:coalgebra_of_phyl}.

Proving these theorems requires a detailed understanding of the operations in a coproduct of operads.  To reach this understanding, we study the relation between an operad $O$ and its underlying collection $U(O)$, where a `collection' is simply a sequence of topological spaces, one for each natural number.  We explicitly describe the free operad $F(C)$ on a collection $C$ in Theorem \ref{thm:C-tree-2}, and describe the counit $\epsilon_O \maps F(U(O)) \to O$ in Theorem \ref{thm:counit}.  Using these results, we describe the operations in a coproduct of operads $O + O'$ in Theorem \ref{thm:coproduct}.   In Theorem \ref{thm:generalized_phyl_trees} we show how this description simplifies when $O'$ has only unary operations.   

Since operads arose in algebraic topology, it is interesting to consider how the phylogenetic operad connects to ideas from that subject.   Boardmann and Vogt 
\cite{BV} defined a construction on operads, the `$W$ construction', which when applied
to the operad for spaces with an associative multiplication gives an operad for loop spaces.  The operad $\Phyl$ has an interesting relation to $W(\Com)$.  To see this, 
define addition on $[0,\infty]$ in the obvious way, where 
\[         \infty + t = t + \infty = \infty   \]
Then $[0,\infty]$ becomes a commutative topological monoid, so we obtain 
an operad with only unary operations, one for each $t \in [0,\infty]$, where
composition is addition.  By abuse of notation, let us call this operad $[0,\infty]$.

Boardmann and Vogt's $W$ construction involves trees with edges having lengths 
in $[0,1]$, but we can equivalently use $[0,\infty]$.  Leinster \cite{Lei}
observed that for any \emph{nonsymmetric} topological operad $O$, Boardmann and Vogt's operad $W(O)$ is closely related to $O + [0,\infty]$.   Here we make this observation precise in the symmetric case.  Operations in $\Com + [0,\infty]$ are just like phylogenetic trees except that edges may have length $\infty$.  Moreover, for any operad $O$, the operad $W(O)$ is a non-unital suboperad of $O + [0,\infty]$.  An operation of $O + [0,\infty]$ lies in $W(O)$ if and only if all the external edges of the corresponding tree have length $\infty$.  We prove this in Theorem \ref{thm:W-construction}.

Berger and Moerdijk \cite{BM2} showed that if $S_n$ acts freely on $O_n$ and $O_1$ is well-pointed,  $W(O)$ is a cofibrant replacement for $O$.  This is true for $O = \Assoc$, the operad whose algebras are topological semigroups.  This cofibrancy is why Boardmann and Vogt could use $W(\Assoc)$ as an operad for loop spaces.  But $S_n$ does not act freely on $\Com_n$, and $W(\Com)$ is not a cofibrant replacement for $\Com$.  So, it is \emph{not} an operad for infinite loop spaces. 

Nonetheless, the larger operad $\Com + [0,\infty]$, a compactification of $\Phyl = \Com + [0,\infty)$, is somewhat interesting. The reason is that any Markov process $\alpha \maps [0,\infty) \to \End(\R^X)$ approaches a limit as $t \to \infty$.  Indeed, $\alpha$ extends uniquely to a homomorphism from the topological monoid $[0,\infty] $ to $\End(\R^X)$.   Thus, given a Markov process on a finite set $X$, the vector space 
$\R^X$ naturally becomes a coalgebra of $\Com + [0,\infty]$.  We prove this in 
Theorem \ref{thm:extension}.

\section{Trees and the phylogenetic operad}
\label{sec:trees}

For a graph theorist, a rooted planar tree is something like this:
\[
\begin{tikzpicture}[scale=0.5]
\node (1) at (0,0){$\bullet$};
\node (3) at (1,1){$\bullet$};
\node (2) at (-2,2){$\bullet$}; 
\node (4) at (2,2){$\bullet$};
\node (5) at (0,2){$\bullet$};

\path[-,font=\scriptsize]
(0,0) edge (1,1)
(0,0) edge (-2,2)
(1,1) edge (2,2)
(1,1) edge (0,2);
\end{tikzpicture}
\]
with a vertex called the `root' at the bottom and vertices called `leaves' at top.  Sometimes these trees are drawn upside-down.  But more importantly, ever since the pioneering work of Boardmann and Vogt \cite{BV},
operad theorists have used trees of a different sort:
\[
\begin{tikzpicture}[scale=0.5]
\node (1) at (0,1){$\bullet$};
\node (2) at (1,2){$\bullet$};
\path[-,font=\scriptsize]
(0,0) edge (0,1)
(0,1) edge (1,2)
(0,1) edge (-2,3)
(1,2) edge (2,3)
(1,2) edge (0,3);
\end{tikzpicture}
\]
These have `input edges' coming in from above and an `output edge' leaving the root from below.  These `external edges' are incident to a vertex at only one end; the other end trails off into nothingness.   So, a tree of this type is not a graph of the various kinds most commonly used in graph theory; rather, it is of the kind discussed in Appendix C.4 of Loday and Vallette's book \cite{LV}.

We want a notion of tree that is suitable for operad theory yet tailored to working with phylogenetic trees.  After painstaking thought, our choice is this:

\begin{defn}
For any natural number $n = 0, 1, 2, \dots$, an \define{$n$-tree} is a quadruple $T=(V,E,s,t)$ where:
\begin{itemize} 
\item $V$ is a finite set;
\item $E$ is a finite non-empty set  whose elements are called \define{edges};
\item $s \maps E \to V\sqcup \{1,\dots, n\}$ and $t \maps E \to V
\sqcup\{0\}$ are maps sending any edge to its \define{source} and \define{target}, respectively.
\end{itemize}
Given $u,v\in V \sqcup\{0, 1,\dots, n\}$, we write $u \stackrel{e}{\longrightarrow} v$ if $e\in E$ has $s(e)=u$ and $t(e)=v$.  

This data is required to satisfy the following conditions:
\begin{itemize}
\item $s \maps E \to V\sqcup \{1,\dots, n\}$ is a bijection; 
\item there exists exactly one $e\in E$ such that $t(e)=0$;
\item for any $v \in V\sqcup\{1,\dots , n\} $ there exists a
\define{directed edge path} from $v$ to $0$: that is, a sequence of edges $e_0, \dots, e_n$ and vertices $v_1 , \dots , v_n$ such that
\[     v \stackrel{e_0}{\longrightarrow} v_1 , \; v_1 \stackrel{e_1}{\longrightarrow} v_2, \; \dots, \; v_n \stackrel{e_n}{\longrightarrow} 0  .\]
\end{itemize}
\end{defn}

We draw $n$-trees following a convention where the source of any edge is at
its top end, while its target is at the bottom.   Here are two 3-trees:
\[
\begin{tikzpicture}[scale=0.5]
\node (1) at (0,1){$\bullet$};
\node (2) at (1,2){$\bullet$};
\node at (-2,3.5){$1$};
\node at (0,3.5){$2$};
\node at (2,3.5){$3$};
\node at (0,-0.5){$0$};
\path[-,font=\scriptsize]
(0,0) edge (0,1)
(0,1) edge (1,2)
(0,1) edge (-2,3)
(1,2) edge (2,3)
(1,2) edge (0,3);
\end{tikzpicture} 
\qquad \qquad
\begin{tikzpicture}[scale=0.5]
\node (1) at (0,1){$\bullet$};
\node at (-2,3.5){$2$};
\node at (0,3.5){$3$};
\node at (2,3.5){$1$};
\node at (0,-0.5){$0$};
\path[-,font=\scriptsize]
(0,0) edge (0,1)
(0,1) edge (0,3)
(0,1) edge (-2,3)
(0,1) edge (2,3);
\end{tikzpicture} 
\]
We draw the elements of $V$ as dots, but not the elements $0,1, \dots n$.   A graph theorist would call all the elements of $V \sqcup \{0\} \sqcup \{1, \dots, n\}$ vertices.  We, however, reserve the term \define{vertex} for an element of $V$.  We call $0$ the \define{root}, and call $1, \dots, n$ the \define{leaves}.    

We define the \define{arity} of a vertex $v\in V$ to be the cardinality of the preimage 
$t^{-1}(v)$.  We call elements of this preimage the \define{children} of $v$.  Note that for us children are edges, not vertices.  

We define a \define{terminus} to be a vertex of arity zero.  Here is a 3-tree with two  termini:
\[
\begin{tikzpicture}[scale=0.5]
\node (1) at (0,1){$\bullet$};
\node (2) at (1,2){$\bullet$};
\node at (-0.5,2){$\bullet$};
\node at (1.5,3){$\bullet$};
\node at (-4,4.5){$1$};
\node at (0,4.5){$3$};
\node at (4,4.5){$2$};
\node at (0,-0.5){$0$};
\path[-,font=\scriptsize]
(0,0) edge (0,1)
(0,1) edge (1,2)
(0,1) edge (-0.5,2)
(0,1) edge (-4,4)
(1,2) edge (1.5,3)
(1,2) edge (4,4)
(1,2) edge (0,4);
\end{tikzpicture}
\]
Termini are important for studying operads with 0-ary operations---or in biology, extinctions, where a species dies out.

We can also have $n$-trees with vertices of arity 1:

\[
\begin{tikzpicture}[scale=0.5]
\node (1) at (0,1.6){$\bullet$};
\node at (0,-0.5){$0$};
\node at (0,3.5){$1$};
\path[-,font=\scriptsize]
(0,0) edge (0,3);
\end{tikzpicture} \qquad \qquad \quad
\begin{tikzpicture}[scale=0.5]
\node (1) at (0,0.8){$\bullet$};
\node (2) at (0,1.6){$\bullet$};
\node at (-1,3.5){$2$};
\node at (1,3.5){$1$};
\node at (0,-0.5){$0$};
\path[-,font=\scriptsize]
(0,0) edge (0,0.8)
(0,0.6) edge (0,1.6)
(0,1.6) edge (-1,3)
(0,1.6) edge (1,3);
\end{tikzpicture} \qquad \qquad \quad
\begin{tikzpicture}[scale=0.5]
\node (1) at (0,1){$\bullet$};
\node (2) at (1,2){$\bullet$};
\node at (0.5,1.5){$\bullet$};
\node at (-1,2){$\bullet$};
\node at (-2,3.5){$3$};
\node at (0,3.5){$1$};
\node at (2,3.5){$2$};
\node at (0,-0.5){$0$};
\path[-,font=\scriptsize]
(0,0) edge (0,1)
(0,1) edge (1,2)
(0,1) edge (-2,3)
(1,2) edge (2,3)
(1,2) edge (0,3);
\end{tikzpicture} 
\]
Vertices with arity 1 are important for describing operads with 1-ary operations.  In biology it is not very interesting to think about a species that splits into just one species.  However,  we will use an operad with one 1-ary operation for each number $t \in [0,\infty)$ to equip phylogenetic trees with lengths for edges.  So, we will need to think about $n$-trees with 1-ary vertices.

Finally, we warn the reader that there exist 0-trees---that is, trees with no leaves:
\[
\begin{tikzpicture}[scale=0.5]
\node (2) at (0,1){$\bullet$};
\node at (0,-0.5){$0$};
\path[-,font=\scriptsize]
(0,0) edge (0,1);
\end{tikzpicture} 
\qquad \qquad
\begin{tikzpicture}[scale=0.5]
\node at (0,1){$\bullet$};
\node at (0,2.2){$\bullet$};
\node at (0,-0.5){$0$};
\path[-,font=\scriptsize]
(0,0) edge (0,1)
(0,1) edge (0,2.2);
\end{tikzpicture}
\qquad
\begin{tikzpicture}[scale=0.5]
\node at (0,.7){$\cdots$};
\node at (0,-1){};
\end{tikzpicture}
\]
since we interpret the set $\{1,\dots, n\}$ to be the empty set when $n = 0$.  There is also a tree with no vertices, which is a 1-tree:
\[
\begin{tikzpicture}[scale=0.5]
\node (2) at (0,1.5){$1$};
\node at (0,-0.5){$0$};
\path[-,font=\scriptsize]
(0,0) edge (0,1);
\end{tikzpicture} 
\]

Moerdijk and Weiss \cite{MW} have described a category $\Omega$ whose objects are essentially the same as $n$-trees (for arbitrary $n$), and this has been further developed by Weber \cite{Web}.  For this the authors need to define morphisms between $n$-trees. We shall only need isomorphisms, which are easier to define:

\begin{defn}\label{defn:iso n-tree}
An \define{isomorphism of $n$-trees}
$f \maps (V,E,s,t) \to (V',E',s',t')$ consists of:
\begin{itemize}
\item a bijection $f_0 \maps V \sqcup\{0, 1,\dots, n\} \to V' \sqcup \{0,1,\dots, n\}$,
\item a bijection $f_1 \maps E \to E'$
\end{itemize}
such that
\begin{itemize} 
\item $f_0$ is the identity on $\{0,1,\dots, n\}$,
\item $f_0 s = s' f_1$,
\item $f_0 t = t' f_1$.
\end{itemize}
\end{defn}
\noindent
In simple terms, two $n$-trees are isomorphic if one is obtained from the other by renaming the vertices and edges.

\begin{defn}
We call an $n$-tree with just one vertex a \define{corolla}.
\end{defn}
\noindent
For each $n\ge 0$ there is, up to isomorphism, a unique $n$-tree that is a corolla:
\[
\begin{tikzpicture}[scale=0.5]
\node (2) at (0,1){$\bullet$};
\node at (0,-0.5){$0$};
\path[-,font=\scriptsize]
(0,0) edge (0,1);
\end{tikzpicture}
\qquad\;\;
\begin{tikzpicture}[scale=0.5]
\node (2) at (0,1){$\bullet$};
\node at (0,2.5){$1$};
\node at (0,-0.5){$0$};
\path[-,font=\scriptsize]
(0,0) edge (0,1)
(0,1) edge (0,2);
\end{tikzpicture}
\qquad\;
\begin{tikzpicture}[scale=0.5]
\node (2) at (0,1){$\bullet$};
\node at (-0.5,2.5){$1$};
\node at (0.5,2.5){$2$};
\node at (0,-0.5){$0$};
\path[-,font=\scriptsize]
(-0.5,2) edge (0,1)
(0.5,2) edge (0,1)
(0,1) edge (0,0);
\end{tikzpicture}
\qquad
\begin{tikzpicture}[scale=0.5]
\node (2) at (0,1){$\bullet$};
\node at (-1,2.5){$1$};
\node at (0,2.5){$2$};
\node at (1,2.5){$3$};
\node at (0,-0.5){$0$};
\path[-,font=\scriptsize]
(-1,2) edge (0,1)
(0,2) edge (0,1)
(1,2) edge (0,1)
(0,1) edge (0,0);
\end{tikzpicture}
\qquad
\begin{tikzpicture}[scale=0.5]
\node at (0,.7){$\cdots$};
\node at (0,-1){};
\end{tikzpicture}
\]

\begin{defn}
A \define{planar} $n$-tree is an $n$-tree in which each vertex is equipped with a linear order on the set of its children.   A \define{planar tree} is a planar $n$-tree for any $n = 0, 1, 2, \dots$.
\end{defn}

\begin{defn} 
An \define{isomorphism of planar $n$-trees} is an isomorphism of $n$-trees $f \maps (V,E,s,t) \to (V',E',s',t')$ that preserves this linear ordering on the children of each vertex. 
\end{defn}

\noindent
We can draw any planar $n$-tree on the plane in such a way that the children of a vertex are listed in increasing order from left to right.  With this convention, the following two planar 3-trees are not isomorphic, even though they are isomorphic as 3-trees:
\[
\begin{tikzpicture}[scale=0.5]
\node (1) at (0,1){$\bullet$};
\node (2) at (1,2){$\bullet$};
\node at (-2,3.5){$1$};
\node at (0,3.5){$2$};
\node at (2,3.5){$3$};
\node at (0,-0.5){$0$};
\path[-,font=\scriptsize]
(0,0) edge (0,1)
(0,1) edge (1,2)
(0,1) edge (-2,3)
(1,2) edge (2,3)
(1,2) edge (0,3);
\end{tikzpicture} 
\qquad \qquad
\begin{tikzpicture}[scale=0.5]
\node (1) at (0,1){$\bullet$};
\node (2) at (1,2){$\bullet$};
\node at (-2,3.5){$1$};
\node at (0,3.5){$3$};
\node at (2,3.5){$2$};
\node at (0,-0.5){$0$};
\path[-,font=\scriptsize]
(0,0) edge (0,1)
(0,1) edge (1,2)
(0,1) edge (-2,3)
(1,2) edge (2,3)
(1,2) edge (0,3);
\end{tikzpicture} 
\]

We are now ready to define a phylogenetic tree:

\begin{defn}
An $n$-tree together with a map $\ell \maps E\to [0,\infty)$ is called an \define{$n$-tree with lengths}.  For any $e\in E$ we call $\ell(e)$ the \define{length} of $e$.
\end{defn}

\begin{defn}
A \define{phylogenetic $n$-tree} is an isomorphism class of $n$-trees with lengths obeying these rules:
\begin{enumerate}
\item the length of every edge is positive, except perhaps for edges incident to a leaf or the root;
\item there are no 0-ary or 1-ary vertices.
\end{enumerate}
A \define{phylogenetic tree} is a phylogenetic $n$-tree for some $n \ge 1$.
\end{defn}

\noindent
In this definition we require that there are no $0$-ary vertices, or in other words, there are no extinctions.  This restriction may seem odd, but it reflects common practice: biologists often use phylogenetic trees to describe evolutionary relationships between currently existing species, ignoring extinct species \cite{B}.  Furthermore, the space of phylogenetic $n$-trees is finite-dimensional with this restriction, but infinite-dimensional without it, since without it an $n$-tree could have arbitrarily many edges labelled by lengths.

Taking isomorphism classes of $n$-trees (see Definition \ref{defn:iso n-tree}) means that the names of the vertices and edges are irrelevant. So, for example, this is a phylogenetic tree:
\[
\begin{tikzpicture}[scale = 0.7]
\node at (-0.5,0){$\bullet$};
\node at (1,-1){$\bullet$};
\node at (-1.5,1.3){$2$};
\node at (-1.4,0.4){$0$};
\node at (0.8,1.3){$1$};
\node at (0.5,0.4){$0.2$};
\node at (2.5,1.3){$3$};
\node at (2.4,0){$2.7$};
\node at (-0.3,-0.7){$1.4$};
\node at (1.5,-1.6){$1.3$};
\node at (1,-2.5){$0$};                   
\path[-,font=\scriptsize]
(-1.5,1) edge (-0.5,0)
(0.5,1) edge (-0.5,0)
(-0.5,0) edge (1,-1)
(2.5,1) edge (1,-1)
(1,-1) edge (1,-2);
\end{tikzpicture}
\]

This tree with lengths is not a phylogenetic tree, because it violates rule 1:
\[
\begin{tikzpicture}[scale = 0.7]
\node at (-0.5,0){$\bullet$};
\node at (1,-1){$\bullet$};
\node at (-1.5,1.4){$2$};
\node at (-1.5,0.5){$0$};
\node at (0.8,1.4){$1$};
\node at (0.6,0.5){$0.2$};
\node at (2.5,1.4){$3$};
\node at (2.5,0){$2.7$};
\node at (-0.1,-0.7){$0$};
\node at (1.5,-1.6){$1.3$};
\node at (1,-2.5){$0$};                   
\path[-,font=\scriptsize]
(-1.5,1) edge (-0.5,0)
(0.5,1) edge (-0.5,0)
(-0.5,0) edge (1,-1)
(2.5,1) edge (1,-1)
(1,-1) edge (1,-2);
\end{tikzpicture}
\]
In terms of biology, the idea is that if a species splits and then \emph{immediately} splits again, we would describe this using a single ternary vertex instead of two binary ones.

The following tree with lengths is not a phylogenetic tree, because it has a $0$-ary vertex:
\[
\begin{tikzpicture}[scale = 0.7]
\node at (-0.5,0){$\bullet$};
\node at (1,-1){$\bullet$};
\node at (-1.5,1.4){$2$};
\node at (-1.5,0.5){$0$};
\node at (0.8,1.4){$1$};
\node at (0.6,0.5){$0.2$};
\node at (2.2,0.4){$\bullet$};
\node at (2.1,-0.4){$2.7$};
\node at (-0.3,-0.7){$1.4$};
\node at (1.5,-1.6){$1.3$};
\node at (1,-2.5){$0$};                   
\path[-,font=\scriptsize]
(-1.5,1) edge (-0.5,0)
(0.5,1) edge (-0.5,0)
(-0.5,0) edge (1,-1)
(2.2,0.4) edge (1,-1)
(1,-1) edge (1,-2);
\end{tikzpicture}
\]
In terms of biology, the terminus here describes an extinction.  We might allow trees with termini if we were building phylogenetric trees using DNA from extinct species.  Doing this would simply require that we replace $\Com$ with $\Com_+$, the operad with a single $n$-ary operation for each $n \ge 0$.   The 0-ary operation in $\Com_+$ would describe an extinction.

Finally, this tree with lengths is not a phylogenetic tree since it has a $1$-ary vertex:
\[
\begin{tikzpicture}[scale = 0.7]
\node at (-0.5,0){$\bullet$};
\node at (1,-1){$\bullet$};
\node at (-0.5,1.4){$2$};
\node at (-1,0.5){$0.3$};
\node at (2.5,1.4){$1$};
\node at (2.5,0){$2.7$};
\node at (-0.3,-0.7){$1.4$};
\node at (1.5,-1.6){$1.3$};
\node at (1,-2.5){$0$};                   
\path[-,font=\scriptsize]
(-0.5,1) edge (-0.5,0)
(-0.5,0) edge (1,-1)
(2.5,1) edge (1,-1)
(1,-1) edge (1,-2);
\end{tikzpicture}
\]
In terms of biology, we do not want to discuss the process of a species splitting into  \emph{just one} species.  In particular, this is not a phylogenetic tree:
\[
\begin{tikzpicture}
\node at (0,1){$\bullet$};
\node at (0,2.4){$1$};
\node at (0.2,1.5){$0$};
\node at (0.2,0.5){$0$};
\node at (0,-0.4){$0$};
\path[-,font=\scriptsize]
(0,0) edge (0,1)
(0,1) edge (0,2);
\end{tikzpicture}
\]
but this is:
\[
\begin{tikzpicture}
\node (2) at (0,1.4){$1$};
\node at (0.2,0.5){$0$};
\node at (0,-0.4){$0$};
\path[-,font=\scriptsize]
(0,0) edge (0,1);
\end{tikzpicture} 
\]
and we shall need it, to serve as the identity operation in the phylogenetic operad.

We next build the phylogenetic operad as the coproduct of two operads already discussed in Section \ref{sec:intro}.  The first is $\Com$.  This is the unique operad with one $n$-ary operation $f_n$ for each $n > 0$ and no 0-ary operations.  The second is $[0,\infty)$.  This is the unique operad with only unary operations whose space of unary operations is the set of nonnegative real numbers, topologized in the standard way, with composition defined to be addition. 

\begin{defn} 
The \define{phylogenetic operad} $\Phyl$ is the coproduct $\Com + [0,\infty)$.
\end{defn}

Our first main result, Theorem \ref{thm:bijection}, gives an explicit description of the
operations of $\Phyl$:

\begin{theorem} 
\label{thm:bijection}
The $n$-ary operations in the phylogenetic operad are in 
one-to-one correspondence with phylogenetic $n$-trees.
\end{theorem}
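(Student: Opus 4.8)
The plan is to use the explicit description of operations in a coproduct $O + O'$ when $O'$ has only unary operations, which is the content of Theorem~\ref{thm:generalized_phyl_trees}. That theorem tells us that the $n$-ary operations of $\Com + [0,\infty)$ are equivalence classes of $n$-trees whose vertices are labelled by operations of $\Com$ and whose edges are labelled by elements of $[0,\infty)$, modulo the relations generated by (a) composing along an edge whenever the edge label is the identity (i.e.\ $0 \in [0,\infty)$) and the edge is internal, absorbing it into the adjacent $\Com$-operations, and (b) contracting internal edges when two $\Com$-vertices meet. So first I would recall that statement in the form we need, noting that a $\Com$-operation on a vertex $v$ is determined entirely by the arity of $v$ (since $\Com_n$ is a point for $n>0$), so the ``vertex labels'' carry no information beyond the requirement that every vertex have arity at least $1$.

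Next I would set up the correspondence explicitly. Given an equivalence class of such labelled $n$-trees, the edge labels give a map $\ell \maps E \to [0,\infty)$, so we have an $n$-tree with lengths; I need to choose the canonical representative in each equivalence class and check it satisfies the two rules defining a phylogenetic $n$-tree. The relations of Theorem~\ref{thm:generalized_phyl_trees} are exactly: (1) an internal edge of length $0$ connecting two vertices can be contracted, merging the two $\Com$-operations into one (using commutativity and the single $n$-ary operation of $\Com$); (2) a $1$-ary vertex can be eliminated by composing its two adjacent edges, adding their lengths. Since $\Com$ has no $0$-ary operation, no vertex of arity $0$ ever appears. Quotienting by relation (2) removes all $1$-ary vertices; quotienting by relation (1) removes all internal edges of length $0$. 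What remains in the canonical representative is precisely an $n$-tree with lengths in which every internal edge has positive length and there are no $0$-ary or $1$-ary vertices — that is, a phylogenetic $n$-tree. Conversely, every phylogenetic $n$-tree, viewed as an $n$-tree with lengths together with the unique $\Com$-operation at each vertex of the appropriate arity, is already in canonical form and hence names an operation of $\Phyl$.

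I then need to verify this assignment is a well-defined bijection: different labelled trees representing the same operation of $\Phyl$ reduce to the same phylogenetic tree (because the reduction moves are confluent — collapsing length-zero internal edges and absorbing $1$-ary vertices commute up to the obvious rearrangement, and the end result is forced), and distinct phylogenetic $n$-trees give distinct operations (because no further relation applies to a tree already in canonical form, so the equivalence class of a canonical representative contains no other canonical representative). This uses the precise generators-and-relations statement of Theorem~\ref{thm:generalized_phyl_trees}; the bijection is then just ``strip the (redundant) vertex labels and read off the lengths.''

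The main obstacle I anticipate is not the existence of the map but the confluence/normal-form argument — showing that every equivalence class contains exactly one phylogenetic tree. Concretely, one must check that the two reduction moves (contract a length-$0$ internal edge; splice out a $1$-ary vertex) can be applied in any order to reach the same normal form, and that a tree to which neither move applies is exactly a phylogenetic $n$-tree. Care is needed with external edges: edges incident to a leaf or the root are never contracted even when their length is $0$ (contracting them would change $n$ or destroy the root), which is why rule (1) of the definition of phylogenetic tree exempts external edges, and this exemption must be matched against the statement of Theorem~\ref{thm:generalized_phyl_trees}, where only \emph{internal} edges can be composed along. Once the normal-form statement is in hand, the theorem follows immediately.
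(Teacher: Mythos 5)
Your overall route is the paper's: both deduce the theorem from Theorem \ref{thm:generalized_phyl_trees}. But you have mis-stated what that theorem provides, and as a result your effort lands in the wrong place. As stated, Theorem \ref{thm:generalized_phyl_trees} already delivers the normal form: operations of $\Com+[0,\infty)$ are equivalence classes of $(\Com,[0,\infty))$-trees in which no unary vertex is labelled by $1_{\Com}$ and no internal edge is labelled by $0$, and the only remaining relation is the one of Definition \ref{defn:eq relation} (permuting the children of a vertex labelled $f\cdot\sigma$). The contraction of length-zero internal edges, the elimination of unary vertices, the special handling of the edge at the root, and the confluence argument (via Newman's Lemma) are all carried out inside the proof of that theorem; so the ``main obstacle'' you anticipate --- the normal-form/confluence argument --- is not something you need to redo, and redoing it from the moves of Theorem \ref{thm:coproduct} would only duplicate that proof.

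What your outline omits is the one step that the paper's proof of Theorem \ref{thm:bijection} actually consists of: the objects in Theorem \ref{thm:generalized_phyl_trees} are isomorphism classes of \emph{planar} labelled trees, whereas phylogenetic $n$-trees carry no planar structure. The residual equivalence relation is the symmetry relation, and the needed observation is that the $S_k$-action on $\Com_k$ is trivial, so this relation identifies exactly the trees obtained from one another by varying the planar structure; hence each equivalence class is the same thing as a (non-planar) phylogenetic $n$-tree. Without this, your claim that the equivalence class of a canonical representative contains no other canonical representative is not right as stated --- a class contains all planar variants of a given tree --- and the injectivity of your map ``strip the vertex labels and read off the lengths'' is not justified. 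Adding that observation (and keeping your correct remarks that $\Com$ has no nullary operation, so no termini occur, and that external edges of length zero are never contracted) completes the argument.
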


The statement here is somewhat inadequate, since we really have a specific bijection in mind.
The task of making this theorem precise and proving it occupies Sections \ref{sec:free}--\ref{sec:coproduct_unary}.   In Section \ref{sec:free} we give a description of the operations in a free operad.  In Section \ref{sec:coproduct} we use this to describe operations in a coproduct of operads.  Finally, in Section \ref{sec:coproduct_unary} we give a simpler description of the operations in a coproduct of operads $O+O'$ when $O'$ has only unary operations.   We show that the $n$-ary operations of $O+O'$ are certain equivalence classes of planar rooted trees having $n$ leaves, with edges labelled by the unary operations of $O'$ and $k$-ary vertices labelled by $k$-ary operations in $O$.   We state this fact more precisely and prove it in Lemma \ref{thm:generalized_phyl_trees}.

Applying this lemma to the coproduct $\Com+[0,\infty)$, we see that the $n$-ary operations in this operad are equivalence classes of planar rooted trees having $n$ leaves, with edges labelled by numbers in $[0,\infty)$ but with unlabelled vertices, since there is a unique $k$-ary operation in 
$\Com$ for each $k \ge 1$.  This gives Theorem \ref{thm:bijection}, which we prove
at the very end of Section \ref{sec:coproduct_unary}.

Henceforth we shall use the bijection given in the proof of Theorem \ref{thm:bijection} to identify $n$-ary operations of $\Phyl = \Com + [0,\infty)$ with phylogenetic $n$-trees.    Since $\Com + [0,\infty)$ is a topological operad, this bijection puts a topology on the set of phylogenetic $n$-trees.  From now on, we freely use $\Phyl_n$ to mean either the set of phylogenetic $n$-trees with this topology or the space of $n$-ary operations of $\Com + [0,\infty)$.  

In their work on phylogenetic trees, Billera, Holmes and Vogtmann \cite{BHV} studied a space closely related to $\Phyl_n$, which they call the space of `metric $n$-trees'.    They give a definition equivalent to this one:

\begin{defn} \label{defn:metric tree}
A \define{metric $n$-tree} is an isomorphism class of $n$-trees with lengths obeying
these rules:
\begin{enumerate}
\item the length of every internal edge is positive;
\item the length of every external edge is zero;
\item there are no 0-ary or 1-ary vertices.
\end{enumerate}
We denote the set of metric $n$-trees by \define{$\T_n$}.
\end{defn}

We note that by the last item in the definition of a metric $n$-tree there are no $0$-trees, and there is exactly one $1$-tree, namely the trivial tree with its unique edge labelled by zero.

Billera, Holmes and Vogtmann do not label the external edges with lengths.  But this is equivalent to labelling them all with length zero.  More importantly, these authors give the space $\T_n$ a metric. To do this, they show that $\T_n$ may be constructed by gluing standard Euclidean orthants together.   They then define the distance between two points in the same orthant as the Euclidean distance, and the distance between two points in two different orthants as the minimum of the lengths of all paths between them that consist of finitely many straight line segments in orthants.  This metric makes $\T_n$ into a space with well-behaved geodesics, called a $\CAT(0)$-space \cite[Sec.\ II.1]{BH}. 

The space $\T_n$ is contractible; we get a contracting homotopy by rescaling the lengths of all internal edges in a way that sends them to zero.  Nonetheless, its topology is very interesting.  For example, Billera, Holmes and Vogtmann note that $\T_4$ is the cone on the Petersen graph:
\begin{center}
\includegraphics[scale = 0.2]{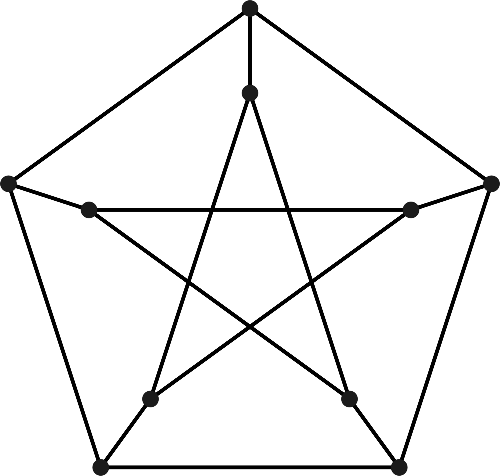}
\end{center}
\noindent
The 15 edges of the Petersen graph correspond to the 15 binary trees with 
four labelled leaves.   The cone on any edge of the Petersen graph is a quadrant,
where the two coordinates are the lengths of the two internal edges of the given
binary tree:
\[
\hspace*{-10em}
\raisebox{0em}{
\begin{tikzpicture}
\node at (0,-0.02){$\bullet$};
\node at (0,5){$\bullet$};
\node at (5,-0.02){$\bullet$};
\node at (5,5){$\bullet$};
\node at (-0.8,5){$(0,1)$};
\node at (4.3,5){$(1,1)$};
\node at (5,-0.5){$(1,0)$};
\node at (-0.5,-0.5){$(0,0)$};
\draw [ultra thick] (0,0) -- (0,6);
\draw [ultra thick] (6,0) -- (0,0);
\draw [blue, ->] (1,0.9) -- (0.2,0.2);
\draw [blue, ->] (1,5.2) -- (0.2,5.05);
\draw [blue, ->] (6,5.2) -- (5.2,5.05);
\draw [blue, ->] (6,0.9) -- (5.2,0.2);
\end{tikzpicture}
}
\hspace*{0em}
\raisebox{14em}{
\scalebox{0.6}{
\begin{tikzpicture}[scale=0.5]
\node (1) at (0,1){$\bullet$};
\node (2) at (2,2){$\bullet$};
\node (3) at (-2,2){$\bullet$}; 
\node at (0,-1.5){$0$};
\node at (-3,4.5){$1$};
\node at (-1,4.5){$2$};
\node at (1,4.5){$3$};
\node at (3,4.5){$4$};
\node at (1.3,1.2){$1$};
\node at (-1.3,1.2){$1$};
\path[-,font=\scriptsize]
(0,-1) edge (0,1)
(0,1) edge (2,2)
(0,1) edge (-2,2)
(-2,2) edge (-3,4)
(-2,2) edge (-1,4)
(2,2) edge (1,4)
(2,2) edge (3,4);
\end{tikzpicture}
}
}
\hspace*{-8em}
\raisebox{3em}{
\scalebox{0.6}{
\begin{tikzpicture}[scale=0.5]
\node (1) at (0,1){$\bullet$};
\node (3) at (-2,2){$\bullet$}; 
\node at (0,-1.5){$0$};
\node at (-3,4.5){$1$};
\node at (-1,4.5){$2$};
\node at (1.5,4.5){$3$};
\node at (3.5,4.5){$4$};
\node at (-1.3,1.2){$1$};
\path[-,font=\scriptsize]
(0,-1) edge (0,1)
(0,1) edge (-2,2)
(-2,2) edge (-3,4)
(-2,2) edge (-1,4)
(0,1) edge (1.5,4)
(0,1) edge (3.5,4);
\end{tikzpicture}
}
}
\hspace*{-21em}
\raisebox{14em}{
\scalebox{0.6}{
\begin{tikzpicture}[scale=0.5]
\node (1) at (0,1){$\bullet$};
\node (3) at (2,2){$\bullet$}; 
\node at (0,-1.5){$0$};
\node at (3,4.5){$4$};
\node at (1,4.5){$3$};
\node at (-1.5,4.5){$2$};
\node at (-3.5,4.5){$1$};
\node at (1.3,1.2){$1$};
\path[-,font=\scriptsize]
(0,-1) edge (0,1)
(0,1) edge (2,2)
(2,2) edge (3,4)
(2,2) edge (1,4)
(0,1) edge (-1.5,4)
(0,1) edge (-3.5,4);
\end{tikzpicture}
}
}
\hspace*{-8em}
\raisebox{3em}{
\scalebox{0.6}{
\begin{tikzpicture}[scale=0.5]
\node (1) at (0,1){$\bullet$};
\node at (0,-1.5){$0$};
\node at (3,4.5){$4$};
\node at (1,4.5){$3$};
\node at (-1.5,4.5){$2$};
\node at (-3.5,4.5){$1$};
\path[-,font=\scriptsize]
(0,-1) edge (0,1)
(0,1) edge (0,1)
(0,1) edge (3,4)
(0,1) edge (1,4)
(0,1) edge (-1.5,4)
(0,1) edge (-3.5,4);
\end{tikzpicture}
}
}
\]
At the boundary of such a quadrant we have phylogenetic trees where the length of one internal edge equals zero; such trees are no longer binary.  In the Petersen graph, three edges meet at each vertex. Thus, the cone on any vertex of the Petersen graph is a ray at which three quadrants of the above form meet along their boundaries.  Similarly, the cone on any pentagon in the Petersen graph consists of five quadrants glued together along their boundaries.  The  corresponding binary trees are those appearing in the famous Stasheff pentagon \cite{Sta}.  Billera, Holmes and Vogtmann explain how to generalize this to any $n$, obtaining a relation between the spaces $\T_n$ and associahedra.

Our second main result relates $\Phyl_n$ to $\T_n$. 

\begin{theorem} 
\label{thm:homeo}
For every $n \ne 1$ there is a homeomorphism 
\[           \Phyl_n \cong \T_n  \times [0,\infty)^{n+1}  ,\]
and $\Phyl_1 \cong \T_1 \times [0,\infty)$.
\end{theorem}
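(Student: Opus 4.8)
The plan is to write down an explicit map in each direction and check that the two are mutually inverse and continuous. Recall that by Theorem \ref{thm:bijection}, an element of $\Phyl_n$ is a phylogenetic $n$-tree: an isomorphism class of $n$-trees with a length function $\ell \maps E \to [0,\infty)$, subject to the rules that internal edges have positive length and there are no $0$-ary or $1$-ary vertices. An $n$-tree $T$ has $n$ leaf edges (the edges whose source lies in $\{1,\dots,n\}$), one root edge (the unique $e$ with $t(e)=0$), and some number of internal edges; note that since $s \maps E \to V \sqcup \{1,\dots,n\}$ is a bijection, the root edge either \emph{is} a leaf edge (only in the trivial $1$-tree) or has source some vertex $v$, and similarly each non-root edge is determined by its source. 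The key combinatorial observation is that for $n \neq 1$, no edge can be simultaneously a leaf edge and the root edge, so the set $E$ of edges of a phylogenetic $n$-tree partitions cleanly as $n$ leaf edges, one root edge, and internal edges, giving $n+1$ ``external'' edges in total. The map $\Phyl_n \to \T_n \times [0,\infty)^{n+1}$ sends a phylogenetic tree to the pair consisting of: (i) the \emph{same underlying $n$-tree} but with all $n+1$ external edge lengths reset to $0$ and internal lengths unchanged --- this is a metric $n$-tree --- and (ii) the tuple of the $n+1$ external edge lengths, indexed by $\{0,1,\dots,n\}$ (root plus leaves). The inverse map takes a metric $n$-tree and a tuple $(t_0,\dots,t_n)$ and overwrites the external lengths accordingly.

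First I would verify this is a well-defined bijection of sets. Well-definedness of the forward map: resetting external lengths to $0$ preserves rules (1) and (3) of Definition \ref{defn:metric tree} (the internal edges keep positive length, vertex arities are unchanged), so we land in $\T_n$; and resetting the tuple is canonical. For the inverse, overwriting external lengths by arbitrary elements of $[0,\infty)$ keeps internal edges positive and arities unchanged, so we land in $\Phyl_n$; the two composites are visibly identities because the underlying $n$-tree and the internal lengths are untouched by both, and external lengths are discarded then restored (or vice versa). The case $n=1$ is special: the only metric $1$-tree is the trivial one-edge tree, that edge is simultaneously root and leaf, and there is exactly one external length to record, which is why $\Phyl_1 \cong \T_1 \times [0,\infty) = \{\pt\} \times [0,\infty) \cong [0,\infty)$; I would dispatch this separately, checking directly from the description of $\Phyl_1$ (phylogenetic $1$-trees are: the trivial tree with a nonnegative root-leaf length, since any vertex would be $1$-ary and hence forbidden).

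The hard part will be continuity in both directions, since this is precisely where the topology on $\Phyl_n$ --- inherited from the coproduct construction $\Com + [0,\infty)$ via Lemma \ref{thm:generalized_phyl_trees} --- must be reconciled with the gluing-of-orthants topology that Billera, Holmes and Vogtmann put on $\T_n$. My plan is to describe both spaces as colimits (quotients of disjoint unions) over the finite poset of combinatorial types of $n$-trees with no $0$-ary or $1$-ary vertices: for a fixed combinatorial type with $k$ internal edges, the corresponding piece of $\Phyl_n$ is $(0,\infty)^k \times [0,\infty)^{n+1}$ and the corresponding piece of $\T_n$ is the open orthant $(0,\infty)^k$, and the gluing in both cases is by letting internal lengths tend to $0$ and collapsing the resulting zero-length edges (which merges two vertices into one, passing to a lower type). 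Under this description the proposed homeomorphism is, piece by piece, just the identity/shuffle $(0,\infty)^k \times [0,\infty)^{n+1} \to (0,\infty)^k \times [0,\infty)^{n+1}$, and it is compatible with the gluing maps because collapsing a zero-length internal edge is the same operation on both sides and does not involve the external coordinates. Hence the induced map on colimits is a homeomorphism. The one genuinely technical point I expect to need care with is checking that the coproduct topology on $\Phyl_n$ really does agree with this orthant-gluing colimit topology --- i.e.\ that the equivalence relation from the coproduct-of-operads construction (Lemma \ref{thm:generalized_phyl_trees}), which allows sliding $[0,\infty)$-labels along and collapsing edges labelled $0$ at internal vertices, restricts to exactly the BHV gluing after we separate out the $n+1$ external coordinates; I would prove this by identifying, within a neighborhood of a point of a given combinatorial type, a canonical set of coordinates (the internal edge lengths and the $n+1$ external lengths) and showing the quotient map is open, so that convergence in $\Phyl_n$ is detected exactly by convergence of these coordinates together with the combinatorial degenerations.
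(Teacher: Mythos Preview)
Your proposal is correct and identifies exactly the right bijection and the right difficulty. The paper's proof follows the same overall architecture---establish the obvious bijection, handle $n=0,1$ by hand, and then work hard on the topology---but it executes the topological step differently. Rather than presenting both $\Phyl_n$ and $\T_n$ as colimits over the poset of combinatorial types and comparing them functorially, the paper writes down by hand an explicit basis for the quotient topology on $\Phyl_n$ (Definition~\ref{defn:basis phyl} and Lemma~\ref{lem:topology phylogenetic trees}): a basic open set around a tree of a given non-binary type $T$ is a product of open intervals for the internal and external edge lengths of $T$, \emph{together with} additional pieces, one for each binary type $\widetilde{T}_l$ that contracts to $T$, carrying extra factors $(0,r_i)$ for the edges being un-collapsed. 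Verifying that this really is a basis for the quotient topology inherited from the free-operad construction is the bulk of the work (the paper analyzes preimages under $\epsilon_{\Com}+\epsilon_{[0,\infty)}$ in detail). Once that basis is in hand, continuity and openness of $f$ are checked directly against balls in the BHV metric.

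Your colimit packaging is cleaner conceptually and would yield the same result; the paper's explicit-basis approach has the advantage of making the neighborhood structure of $\Phyl_n$ completely concrete, at the cost of more bookkeeping. The ``one genuinely technical point'' you flag---that the coproduct topology agrees with the orthant-gluing topology---is precisely the content of the paper's Lemma~\ref{lem:topology phylogenetic trees}, and your sketch of how to prove it (local coordinates plus openness of the quotient map) is the right idea, though in practice the verification is somewhat laborious because one must track how the equivalence relation on $U(\Com)+U([0,\infty))$-trees interacts with the product topology on the free operad.
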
  

If we compare the definition of a phylogenetic $n$-tree to the definition of a metric $n$-tree, we see that the theorem holds vacuously for $n=0$, since there are no metric $0$-trees and no phylogenetic $0$-trees.  A phylogenetic $1$-tree has just one edge, which is labelled by a non-negative real number, while there is only one metric $0$-tree, namely the tree with one edge labelled by zero. Thus there is a bijection between  $\Phyl_1$ and   $\T_1  \times [0,\infty)$. More interestingly, for $n > 1$ a phylogenetic tree gives a metric $n$-tree together with an $(n+1)$-tuple of numbers in $[0,\infty)$, namely the lengths labelling the  external edges of the phylogenetic tree.  On the other hand, a metric $n$-tree together with an $(n+1)$-tuple of numbers in $[0,\infty)$ gives a phylogenetic tree with these numbers labelling its external edges.  We thus have a  bijection between $\Phyl_n$ and  $\T_n  \times [0,\infty)^{n+1}$. 

Checking that this bijection is a homeomorphism requires further work.  The operad $\Phyl$ is defined as a coproduct of operads, so the topology on $\Phyl_n$ is determined rather implicitly by  the universal property of the coproduct.   We describe an explicit basis for the topology on $\Phyl_n$ in  Appendix \ref{A:topology of phyl}, and use this to prove Theorem \ref{thm:homeo}.

\section{Branching Markov processes}
\label{sec:branching}

In Section \ref{sec:intro} we sketched how a Markov process on a finite set gives a coalgebra of the phylogenetic operad.   Here we give the details.   We also prove that any coalgebra of $\Phyl = \Com + [0,\infty)$ arising this way extends uniquely to a coalgebra of a larger operad $\Com + [0,\infty]$.  This expresses the fact that Markov processes on finite sets always converge as time approaches infinity.

We begin with a general definition of algebras and coalgebras for an operad $O$.
Let $\Top$ be a convenient category of topological spaces, such as compactly
generated Hausdorff spaces, and suppose $C$ is a symmetric monoidal category enriched over $\Top$.  Then for any object $V \in C$ there is an operad $\End(V)$, the \define{endomorphism operad} of $V$, with
\[           \End(V)_n = \hom_C(V^{\otimes n}, V) .\]
An \define{algebra} of $O$ in $C$ is an operad morphism $\alpha \maps O \to \End(V)$.  In other words, $\alpha$ sends operations $f \in O_n$ to maps 
\[    \alpha(f) \maps V^{\otimes n} \to V \]
in a continuous manner, preserving composition, the identity, and the permutation group actions.  

A \define{coalgebra} of $O$ in $C$ is an algebra of $O$ in the opposite category, $C^{\op}$.   Equivalently, it is an operad morphism from $O$ to the \define{coendomorphism operad} $\Coend(V)$, where
\[        \Coend(V)_n = \hom_C(V, V^{\otimes n})  \]
Given a coalgebra of $O$, any operation $f \in O_n$ is mapped to a morphism
\[           \alpha(f) \maps V \to V^{\otimes n} . \]
We say $\alpha(f)$ \define{coacts} on $V$.

In this section we only need coalgebras in $\Fin\Vect$, the category of finite-dimensional
real vector spaces and linear maps, made into a symmetric monoidal category
with its usual tensor product, and enriched over $\Top$ in the usual way.  So, by `coalgebra', we shall mean one in $\Fin\Vect$.  

Recall from Section \ref{sec:intro} that a coalgebra of the operad $[0,\infty)$ is known to analysts as a \define{continuous 1-parameter semigroup}.   Concretely, such a coalgebra consists of
a finite-dimensional real vector space $V$ together with linear maps
\[             \alpha(t) \maps V \to V  \]
for each $t \in [0,\infty)$, such that:
\begin{itemize}
\item $\alpha(s+t) = \alpha(s) \alpha(t)$ for all $s,t \in [0,\infty)$,
\item $\alpha(0) = 1_V$,
\item $\alpha(t)$ depends continuously on $t$.
\end{itemize}
When $V = \R^X$ for a finite set we have the following result:

\begin{theorem} 
\label{thm:coalgebra_of_phyl}
Given a finite set $X$ and a continuous 1-parameter semigroup $\alpha \maps [0,\infty) \to \End(\R^X)$, there is a unique way of making
$\R^X$ into a coalgebra of $\Phyl =  \Com+ [0,\infty)$ such that:
\begin{enumerate}
\item Each unary operation $t \in [0,\infty)$ coacts on $\R^X$ as
$\alpha(t) \maps \R^X \to \R^X$.
\item  The unique binary operation in $\Com$ coacts on $\R^X$ as the linear
map
\[  \Delta \maps  \R^X \to \R^X  \tensor \R^X  \]
where
\[     \Delta (f)(x_1, x_2) = 
\left\{ \begin{array}{cl}  f(x) & \textrm{if } x_1 = x_2 = x  \\ \\
                                       0   & \textrm{otherwise.}   
\end{array} \right.   \]
\end{enumerate}
\end{theorem}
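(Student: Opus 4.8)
The plan is to prove both existence and uniqueness purely formally, from the universal property of the coproduct $\Phyl = \Com + [0,\infty)$, exactly as anticipated in Section~\ref{sec:intro}. The key observation is the dual of the remark about algebras made there: since a coalgebra of an operad $O$ in a $\Top$-enriched symmetric monoidal category $C$ is by definition an operad morphism from $O$ into the coendomorphism operad $\Coend(V)$ (that is, an algebra of $O$ in $C^\op$), the universal property of the coproduct in the category of topological operads shows that a coalgebra of $\Com + [0,\infty)$ on $\R^X$ is precisely the same data as a coalgebra of $\Com$ on $\R^X$ together with a coalgebra of $[0,\infty)$ on $\R^X$, with no compatibility relating the two. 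So it suffices to produce each of these two structures, check that each is uniquely forced by the corresponding condition, and invoke the universal property.

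For the $[0,\infty)$ summand there is nothing to prove: a coalgebra of the operad $[0,\infty)$ in $\Fin\Vect$ is by definition (Section~\ref{sec:intro}) a continuous 1-parameter semigroup of operators on the vector space, and $\alpha$ is assumed to be one. Condition~(1) says that this semigroup is $\alpha$, so the $[0,\infty)$-coalgebra structure both exists and is uniquely determined. For the $\Com$ summand, recall from Section~\ref{sec:intro} that $\R^X$ carries a canonical cocommutative coalgebra structure --- it is the linear dual of the commutative algebra of functions on $X$, with the set $X$ itself giving a self-dual basis --- under which the unique $n$-ary operation of $\Com$ coacts as the diagonal map $\Delta_n$. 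For $n=2$ this is exactly the map $\Delta$ of condition~(2), so this structure exists and satisfies~(2). Uniqueness for this summand follows because $\Com$ is generated as an operad by its binary operation $f_2$ (the $n$-ary operation is any iterated operadic composite of copies of $f_2$, and $\Com_1$ is just the identity), so any operad morphism out of $\Com$ is determined by the image of $f_2$, which condition~(2) pins down to be $\Delta$.

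Assembling these via the universal property of $+$ yields a unique operad morphism $\Phyl \to \Coend(\R^X)$, i.e.\ a unique coalgebra of $\Phyl$ on $\R^X$, whose restrictions to $\Com$ and $[0,\infty)$ are the structures above; equivalently, the unique coalgebra of $\Phyl$ satisfying~(1) and~(2). I do not expect a serious obstacle here: the content beyond the universal property is the elementary verification, already noted in Section~\ref{sec:intro}, that $\Delta$ is cocommutative ($\Delta(f)(x_1,x_2)$ is symmetric in its arguments) and coassociative ($(\Delta \otimes \id)\circ\Delta = (\id \otimes \Delta)\circ\Delta$, both equal to $\Delta_3$), which is what makes $\Delta$ extend to a genuine coalgebra of $\Com$ rather than merely a symmetric linear map. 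The one point that demands care --- and the closest thing to an obstacle --- is keeping the variance straight, namely that coalgebras of $O$ in $C$ are algebras of $O$ in $C^\op$, so that the coproduct's universal property is applied with the coendomorphism operad as target; once that bookkeeping is done the argument is routine, and in particular it does not require the tree description of $\Phyl_n$ from Theorem~\ref{thm:bijection}.
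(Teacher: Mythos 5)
Your argument is correct and follows essentially the same route as the paper's proof: uniqueness on the $\Com$ factor because all higher-arity operations are composites of the binary one (forcing the coaction to be $\Delta_n$), existence by checking this gives a genuine cocommutative coalgebra, the $[0,\infty)$ factor handled by $\alpha$ itself, and then the universal property of the coproduct to assemble and get uniqueness. The extra remarks on coassociativity/cocommutativity of $\Delta$ and on the variance bookkeeping are just a slightly more explicit version of what the paper leaves as ``easy to check.''
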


\begin{proof} 
Because all $n$-ary operations in $\Com$ for $n > 1$
are composites of the unique binary operation,
item (2) forces the unique $n$-ary operation to coact as the linear map
\[  \Delta_n   \maps  \R^X \to \R^X  \tensor \cdots  \tensor \R^X  \iso \R^{X^n} \]
where
\[     \Delta_n (f)(x_1, \dots, x_n) = 
\left\{ \begin{array}{cl}  f(x) & \textrm{if } x_1 = \cdots = x_n = x  \\ \\
                                       0   & \textrm{otherwise}   
\end{array} \right.   \]
It is easy to check that this formula indeed makes $\R^X$ into a coalgebra of
$\Com$.  It also becomes a coalgebra of the operad $[0,\infty)$ via item (1).
By the universal property of the coproduct of operads, these coalgebra structures uniquely determine a way of making $\R^X$ into a coalgebra of the coproduct $\Com + [0,\infty)$.
\end{proof}

Among continuous 1-parameter semigroups on $\R^X$, Markov processes have a special property: they always approach an equilibrium.  More precisely:

\begin{lemma}
\label{lem:convergence}
If $X$ is a finite set and $\alpha \maps [0,\infty) \to \End(\R^X)$ is a Markov process, 
the operators $\alpha(t) \in \End(\R^X)$ converge to a fixed operator $P \in \End(\R^X)$ as $t \to \infty$.   Moreover, $P^2 = P$.
\end{lemma}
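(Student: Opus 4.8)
The plan is to exploit the defining property of a Markov process---that each $\alpha(t)$ is a stochastic matrix---together with the semigroup law $\alpha(s+t) = \alpha(s)\alpha(t)$. Write $\alpha(t) = e^{tQ}$ for the infinitesimal generator $Q$; such a $Q$ exists because $\alpha$ is a continuous one-parameter semigroup on a finite-dimensional space, and $Q$ has the structure of a rate matrix (nonnegative off-diagonal entries, columns summing to zero). The convergence claim is then a statement about the long-time behavior of $e^{tQ}$, which is controlled by the eigenvalues of $Q$: since every stochastic matrix has spectral radius $1$, every eigenvalue $\lambda$ of $Q$ satisfies $\mathrm{Re}(\lambda)\le 0$, and the eigenvalues on the imaginary axis are exactly those needed to track.

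The key steps, in order, are: (1) Observe that $\alpha(t)$ is stochastic for all $t$, hence $\|\alpha(t)\|\le 1$ in the appropriate norm, so the family $\{\alpha(t)\}$ is bounded. (2) Pass to the generator $Q$ and analyze the Jordan form of $Q$. The eigenvalue $0$ always occurs (since the all-ones covector is a left eigenvector, as columns of $Q$ sum to zero). (3) Show that the only eigenvalue of $Q$ with $\mathrm{Re}(\lambda) = 0$ is $\lambda = 0$ itself, and that its Jordan blocks are trivial ($1\times 1$); otherwise $\|e^{tQ}\|$ would be unbounded (a nontrivial Jordan block at $\lambda = 0$ gives polynomial growth, an eigenvalue $i\omega$ with $\omega\ne 0$ gives oscillation that prevents convergence but is at least bounded---so one must argue a bit more carefully here, perhaps invoking that a stochastic matrix whose spectral radius is attained only at $1$ is needed, or using the Perron--Frobenius-type structure of nonnegative matrices). (4) Decompose $\R^X = \ker Q^k \oplus \mathrm{im}\, Q^k$ for $k$ large; on the second summand $e^{tQ}\to 0$ since all eigenvalues there have strictly negative real part, and on the first summand $e^{tQ}$ is eventually the identity (since $Q$ restricted there is nilpotent and actually zero, by triviality of the Jordan blocks). (5) Conclude $\alpha(t)\to P$, where $P$ is the projection onto $\ker Q$ along $\mathrm{im}\,Q^k$; idempotency $P^2=P$ is immediate from this description, or alternatively from $P = \lim_t \alpha(t) = \lim_t \alpha(2t) = \lim_t \alpha(t)\alpha(t) = P^2$ by continuity of multiplication.

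I expect the main obstacle to be step (3): ruling out purely imaginary nonzero eigenvalues of $Q$. Boundedness of $\alpha(t)$ alone does not preclude such eigenvalues (they would give genuinely oscillatory, non-convergent behavior), so one genuinely needs to use that the matrices are \emph{nonnegative}, not merely bounded. The cleanest route is probably to avoid the generator entirely and instead argue directly about powers of a fixed stochastic matrix: fix $t_0 > 0$, note $A := \alpha(t_0)$ is stochastic, and analyze $\lim_n A^n$ using the standard theory of nonnegative matrices (the spectral projection at eigenvalue $1$), then upgrade from the discrete limit along $t\in t_0\N$ to the continuous limit using uniform continuity of $\alpha$ on compact intervals together with $\alpha(t)\alpha(t_0) = \alpha(t+t_0)$. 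Once the discrete convergence $A^n \to P$ is in hand, the idempotency $P^2 = P$ and the identity $\alpha(t)P = P$ follow formally, and passing $t\to\infty$ continuously is routine.
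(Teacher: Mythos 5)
Your overall skeleton---pass to the generator $Q$, show every eigenvalue of $Q$ is either $0$ or has negative real part with only trivial Jordan structure on the imaginary axis, split $\R^X$ accordingly, and get $P^2=P$ from $\alpha(2t)=\alpha(t)^2$---is the same as the paper's, and your boundedness argument does correctly dispose of nontrivial Jordan blocks at the eigenvalue $0$. But the step you yourself flag as the main obstacle, ruling out nonzero purely imaginary eigenvalues of $Q$, is the actual analytic content of the lemma, and your proposal never closes it. The paper closes it by applying the Perron--Frobenius theorem to the shifted generator $H+cI$, which is a nonnegative matrix, irreducible on each strongly connected component of the graph of $H$: this produces a \emph{real} eigenvalue strictly dominating the real parts of all other eigenvalues, and stochasticity of $\exp(tH)$ applied to the positive Perron eigenvector forces that dominant eigenvalue to be $0$; block-triangularizing over the strongly connected components then yields the full spectral statement. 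Nothing in your write-up substitutes for this; as you note, boundedness of $\alpha(t)$ alone cannot, since it is compatible with genuine oscillation.

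Your proposed fallback---fix $t_0>0$, set $A=\alpha(t_0)$, and obtain $\lim_n A^n$ from ``the standard theory of nonnegative matrices''---fails as stated: powers of a stochastic matrix need not converge (any nontrivial permutation matrix is a counterexample), because the peripheral spectrum of a stochastic matrix may contain roots of unity other than $1$. To make that route work you must use that $A$ is not an arbitrary stochastic matrix but $e^{t_0Q}$ for a rate matrix $Q$: for instance, $e^{t_0Q}$ has strictly positive diagonal entries, which forces aperiodicity of every recurrent class and hence makes $1$ the only peripheral eigenvalue; alternatively, unit-modulus eigenvalues of a stochastic matrix are roots of unity, whereas an eigenvalue $i\omega\neq 0$ of $Q$ would give $e^{i\omega t_0}$ as an eigenvalue of $\alpha(t_0)$ for every $t_0$, which is not a root of unity for suitable $t_0$. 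Either patch (or the paper's Perron--Frobenius argument on $H+cI$) supplies the missing ingredient; without one of them the convergence claim is not established. The remaining steps of your outline---the direct-sum decomposition, the passage from discrete to continuous time, and the idempotency of $P$---are routine and correct.
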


\begin{proof} 
This result should be well-known in the theory of Markov processes, but since we were unable to find an easy reference we include a proof which also serves as a quick introduction to Markov processes.

Suppose $\alpha \maps [0,\infty) \to \End(\R^X)$ is a Markov process.   Because $\alpha$ is a continuous one-parameter semigroup of operators on $\R^X$, it follows \cite[Thm.\ 2.9]{EN} that $\alpha(t)$ is differentiable, and if we set
\[          H = \left. \frac{d}{dt} \alpha(t) \right|_{t = 0} \]
then $\alpha(t) = \exp(tH)$.  

Since $\alpha(t)$ is stochastic for all $t  \ge 0$, its matrix entries must be nonnegative in the standard basis of $\R^X$, but its off-diagonal entries vanish at $t = 0$ since $\alpha(0) = 1$.  Thus, the off-diagonal entries of $H = \left. \frac{d}{dt} \alpha(t) \right|_{t = 0}$ must be nonnegative.  We can define a directed graph $\Gamma$ with $X$ as its set of nodes and an edge from $i \in X$ to $j \in X$ if and only if $H_{ji} > 0$.   We begin by assuming that this graph is \define{strongly connected}, meaning that there is a directed edge path from $i$ to $j$ for all $i, j \in X$.  In this case $H$ is \define{irreducible}: there is no way to bring the matrix $(H_{ij})$ into a block upper triangular form by permuting its rows and columns \cite[Sec.\ 8.1]{BC}. 

Thus, for sufficiently large $c > 0$, $H + c I$ will be irreducible and also have nonnegative matrix entries.  As a result, the Perron--Frobenius theorem applies \cite[Sec.\ 8.3]{BC}. This says that the matrix $H + c I$ has a real eigenvalue that is greater than the real part of all other eigenvalues, and a positive eigenvector $f \maps X \to (0,\infty)$ with this eigenvalue.   It follows that the same holds for $H$.  

Suppose $\lambda$ is the real eigenvalue of $H$ that is greater than the real part of all other eigenvalues, and let $f \maps X \to (0,\infty)$ be a function with 
$H f= \lambda f$.   We may assume $f$ is normalized to be a probability distribution.  Since $\exp(tH)$ is stochastic for all $t \ge 0$, 
\[   \exp(tH)f = \exp(t\lambda) f  \]
must also be a probability distribution for all $t \ge 0$.  It follows that $\lambda = 0$.  In particular, $H$ has zero as an eigenvalue.  Moreover, if we regard $H$ as a special case of an $n \times n$ matrix of complex numbers, then all the other---possibly complex---eigenvalues $\lambda_i$ of $H$ have $\mathrm{Re}(\lambda_i) < 0$. 

More generally, suppose the graph $\Gamma$ is not strongly connected.  Then we can partition
$X$ into \define{strongly connected components}: that is subsets $\{S_k\}_{k \in \Lambda}$ such that the restriction of $\Gamma$ to each subset is strongly 
connected.  Moreover, these strongly connected components are partially ordered
where $k \preccurlyeq \ell$ if and only if there exists a directed edge path from a node in $S_k$ to a node in $S_\ell$.  We can choose a linear ordering $\le$ for $\Lambda$ such that
$k \preccurlyeq \ell$ implies $k \le \ell$.  Thus, we can order the standard basis
for $\R^X$ in such a way that $H$ becomes a block upper triangular matrix,
with blocks corresponding to strongly connected components. 

By our analysis of the strongly connected case, each diagonal block of $H$ must have
zero as an eigenvalue, with all other eigenvalues having negative
real part.  Since $H$ is block upper triangular, it follows that the only possible eigenvalues of $H$, including complex eigenvalues, are zero and numbers with negative real part.  Using the Jordan normal form, it follows that for some invertible linear transformation $Q$, $QHQ^{-1}$ is a block diagonal sum of Jordan blocks:
\[  
\left(\begin{array}{cccccc}
\lambda & 1 & 0 & \cdots & 0 & 0  \\
0 & \lambda & 1 & \cdots & 0 & 0 \\
0 & 0 & \lambda & \cdots & 0 & 0 \\
\vdots & \ddots & \ddots  & \ddots & \vdots   &  \vdots \\
0 & 0 & 0 & \cdots & \lambda & 1 \\
0 & 0 & 0 & \cdots & 0 & \lambda
 \end{array} \right)
 \]
where $\lambda = 0$ or $\mathrm{Re}(\lambda) < 0$.  Exponentiating, we see that $Q\exp(tH)Q^{-1}$ is a block diagonal sum of square matrices of this form:
\[  
\left(\begin{array}{cccccc}
e^{t\lambda} & te^{t\lambda} &\displaystyle{ \frac{t^2}{2!} e^{t\lambda}} & \cdots & \displaystyle{ \frac{t^{k-1}}{(k-1)!} e^{t\lambda}}  \\
0 & e^{t\lambda} & te^{t\lambda} & \cdots &\displaystyle{ \frac{t^{k-2}}{(k-2)!} e^{t\lambda} }  \\
\vdots &\ddots & \ddots &\ddots &\vdots \\
0 & 0 & 0 & \cdots  & te^{t\lambda} \\
0 & 0 & 0 & \cdots  & e^{t\lambda}
 \end{array} \right)
 \]
As $t \to +\infty$, the above matrix converges to the identity if $\lambda = 0$ and to zero if $\mathrm{Re}(\lambda) < 0$.  Thus, it converges to an idempotent.  As a consequence, $\exp(tH)$ converges to an idempotent $P \in \End(\R^X)$ as $t \to +\infty$.
\end{proof}

We can make $[0,\infty]$ into a commutative monoid using addition, where we define
$\infty + t = t + \infty = \infty$ for all $t \in [0,\infty]$.  The set $[0,\infty]$ has a 
topology where it is homeomorphic to a closed interval, e.g.\ by requiring that
$\tan \maps [0,\pi/2] \to [0,\infty]$ is a homeomorphism.    With this topology $[0,\infty]$ becomes a topological monoid, and thus an operad with only unary operations.

Since $[0,\infty)$ is a suboperad of $[0,\infty]$ in an obvious way, the phylogenetic operad $\Phyl = \Com + [0,\infty)$ becomes a suboperad of $\Com + [0,\infty]$, thanks to Corollary \ref{cor:suboperad}.

\begin{theorem}
\label{thm:extension}
If $X$ is a finite set and $\alpha \maps [0,\infty) \to \End(\R^X)$ is a Markov process, 
$\R^X$ becomes a coalgebra of $\Com + [0,\infty]$ in a unique way extending its
structure as a coalgebra of $\Phyl = \Com + [0,\infty)$ described in Theorem \ref{thm:coalgebra_of_phyl}.
\end{theorem}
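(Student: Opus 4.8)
The plan is to use the universal property of the coproduct operad, exactly as in the proof of Theorem \ref{thm:coalgebra_of_phyl}. A coalgebra of $\Com + [0,\infty]$ on $\R^X$ is, by the universal property, the same as a coalgebra of $\Com$ on $\R^X$ together with a coalgebra of $[0,\infty]$ on $\R^X$, with no compatibility required. The $\Com$-coalgebra structure is already fixed: it is the cocommutative comultiplication $\Delta_n$ coming from the diagonal on $X$, exactly as in Theorem \ref{thm:coalgebra_of_phyl}, and we want it unchanged since our extension must restrict to the $\Phyl$-coalgebra there. So the entire content is to show that the $[0,\infty)$-coalgebra structure $\alpha$ extends uniquely to a $[0,\infty]$-coalgebra structure, i.e.\ that the continuous one-parameter semigroup $\alpha \maps [0,\infty) \to \End(\R^X)$ extends uniquely to a continuous monoid homomorphism $\bar\alpha \maps [0,\infty] \to \End(\R^X)$.

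First I would invoke Lemma \ref{lem:convergence}: since $\alpha$ is a Markov process, $\alpha(t)$ converges as $t \to \infty$ to an idempotent $P \in \End(\R^X)$. So the only possible value of $\bar\alpha(\infty)$ making $\bar\alpha$ continuous is $P$, which gives uniqueness of the extension as a continuous map. It remains to check that $\bar\alpha$ defined by $\bar\alpha(t) = \alpha(t)$ for $t < \infty$ and $\bar\alpha(\infty) = P$ is actually a monoid homomorphism, i.e.\ that $\bar\alpha(s+t) = \bar\alpha(s)\bar\alpha(t)$ for all $s,t \in [0,\infty]$. The only new relations to verify are those involving $\infty$: I need $P^2 = P$ (which is precisely the idempotency asserted in Lemma \ref{lem:convergence}, corresponding to $\infty + \infty = \infty$) and $P\,\alpha(s) = \alpha(s)\,P = P$ for each finite $s$ (corresponding to $\infty + s = s + \infty = \infty$). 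The latter follows by continuity: $P\,\alpha(s) = \bigl(\lim_{t\to\infty}\alpha(t)\bigr)\alpha(s) = \lim_{t\to\infty}\alpha(t+s) = P$, and symmetrically on the other side, using that matrix multiplication is continuous and that $\alpha$ is already a homomorphism on $[0,\infty)$. Continuity of $\bar\alpha$ at points of $[0,\infty)$ is inherited from $\alpha$, and continuity at $\infty$ is exactly the convergence statement of Lemma \ref{lem:convergence}, once one notes that $[0,\infty]$ carries the topology of the one-point compactification (equivalently, the closed-interval topology via $\tan$).

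Having produced the unique continuous monoid homomorphism $\bar\alpha \maps [0,\infty] \to \End(\R^X)$, I would conclude: this is precisely a coalgebra structure for the operad $[0,\infty]$ on $\R^X$, it restricts to the given $[0,\infty)$-coalgebra structure, and it is the unique such extension. Combining with the unchanged $\Com$-coalgebra structure and applying the universal property of the coproduct $\Com + [0,\infty]$ exactly as in the proof of Theorem \ref{thm:coalgebra_of_phyl} yields the unique coalgebra structure on $\R^X$ for $\Com + [0,\infty]$ extending the one on $\Phyl = \Com + [0,\infty]$. The main obstacle is essentially bookkeeping rather than depth: the real analytic input (convergence and idempotency of the limit $P$) is already packaged in Lemma \ref{lem:convergence}, so the only thing to be careful about is checking the mixed relations $P\alpha(s)=\alpha(s)P=P$ and confirming that uniqueness of the \emph{operad}-coalgebra extension really does reduce, via the coproduct universal property, to uniqueness of the \emph{monoid}-homomorphism extension $[0,\infty) \hookrightarrow [0,\infty]$.
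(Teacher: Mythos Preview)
Your proposal is correct and follows essentially the same approach as the paper: reduce via the universal property of the coproduct to extending the $[0,\infty)$-coalgebra to $[0,\infty]$, invoke Lemma~\ref{lem:convergence} for existence of $P=\lim_{t\to\infty}\alpha(t)$, use density of $[0,\infty)$ in $[0,\infty]$ for uniqueness, and verify the relations $P\alpha(s)=\alpha(s)P=P$ and $P^2=P$ by passing to the limit. (One typo: in your final sentence you wrote $\Phyl=\Com+[0,\infty]$ where you meant $\Com+[0,\infty)$.)
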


\begin{proof}
Thanks to the universal property of the coproduct, we only need to prove that $\R^X$ becomes a coalgebra of $[0,\infty]$ in unique way extending its structure as a coalgebra 
of $[0,\infty)$, where $t \in [0,\infty)$ acts as $\alpha(t)$.  Uniqueness follows from the continuity and fact that $[0,\infty)$ is dense in $[0,\infty]$: this forces us to take
\[       \alpha(\infty) = \lim_{t \to \infty} \alpha(t)  .\]
For existence, we first use Lemma \ref{lem:convergence} to note that the limit exists.  Then, to note that $\alpha \maps [0,\infty] \to \End(\R^X)$ thus defined is really a coalgebra
action, we note that for any $s \in [0,\infty)$
\[       \alpha(s) \alpha(\infty) = \alpha(s)  \lim_{t \to \infty} \alpha(t) =
 \lim_{t \to \infty} \alpha(s+t) = \alpha(\infty) \]
as required, and similarly $\alpha(\infty)\alpha(s) = \alpha(\infty)$, and also
\[   \alpha(\infty) \alpha(\infty) = 
\lim_{t \to \infty} \alpha(s)  \lim_{t \to \infty} \alpha(t) =
 \lim_{s,t \to \infty} \alpha(s+t) = \alpha(\infty).  \qedhere \]
\end{proof}

Combining this result and Lemma \ref{lem:convergence}, one
sees that $P = \alpha(\infty)$ is an idempotent ($P^2 = P$) that maps the set of probability distributions on $X$ onto the set of \define{equilibrium} probability
distributions, meaning those that are invariant under the time evolution given by 
$\alpha(t)$.

\section{Free operads}
\label{sec:free}

Theorem \ref{thm:bijection} claims that there is a bijection between phylogenetic trees and operations in $\Com + [0,\infty)$.   Constructing this bijection takes some work.   We need an explicit description of the operations in a  coproduct of operads---and for that, we need a description of the operations in a free operad.  We work out the details in the next three sections.

Readers who are eager to read about the relationship between the phylogenetic operad and the  $\wco$ construction can go directly to Section \ref{section:w}.  We have tried to make that section readable on its own, though logically it depends on all the material that comes before. 

 
In what follows, we use `operad' to mean a symmetric operad in the symmetric monoidal category  $\Top$.    Thus, our definition matches that of May \cite{May72} except that we allow more than one operation of arity $0$.  We shall show, among other things, that there is an operad $\PTree$ with isomorphism classes of planar $n$-trees as its $n$-ary operations.  Moreover $\PTree$ arises quite naturally from the theory of operads, as we now explain.

Every operad $O$ has an underlying `collection', which is simply the list of
spaces $O_n$, forgetting composition and the permutation group actions:

\begin{defn} 
A \define{collection} $C$ consists of topological spaces $\{C_n\}_{n \ge 0}$.  A \define{morphism} of collections $f \maps C \to C'$ consists of a continuous map $f_n \maps C_n \to C'_n$ for each $n \ge 0$.  
\end{defn}
\noindent
Collections and morphisms between them form a category.  This category is simply $\NTop$,
where $\N$ stands for the set of natural numbers, or the corresponding discrete category.

Let $\Op$ be the category consisting of operads and morphisms between them.  There is a forgetful functor $U \maps \Op \to \NTop$ sending any operad $O$ to the collection $\{O_n\}_{n \ge 0}$.  Moreover:

\begin{lemma}
\label{monadic}
The forgetful functor
\[         U \maps \Op \to \NTop  \]
is \define{monadic}, meaning that it has a left adjoint 
\[         F \maps \NTop  \to \Op  \]
giving rise to a monad $UF \maps \NTop \to \NTop$, and the comparison functor from $\Op$ to the category of algebras of this monad is an equivalence.
\end{lemma}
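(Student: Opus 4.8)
The plan is to verify the hypotheses of Beck's monadicity theorem, after first exhibiting the left adjoint $F$ explicitly via trees. First I would construct $F$: for a collection $C$, take $F(C)_n$ to be the space of isomorphism classes of $n$-trees whose vertices of arity $k$ are decorated by points of $C_k$ (topologized as a disjoint union over the finite set of tree shapes, each summand being a product of copies of the $C_k$'s, quotiented by tree automorphisms). Operadic composition is grafting of decorated trees, the symmetric group acts by relabelling leaves, and the unit is the no-vertex $1$-tree. One checks the operad axioms hold — this is the content that will be developed carefully in Section~\ref{sec:free} (Theorem~\ref{thm:C-tree-2}) — and that there is a natural map $C \to U F(C)$ sending a point of $C_n$ to the corolla decorated by it. The adjunction $\mathrm{Hom}_{\Op}(F(C), O) \cong \mathrm{Hom}_{\NTop}(C, U(O))$ then follows by the universal property of free structures: a morphism out of $F(C)$ is determined by where the corollas go, and any choice of images extends uniquely to decorated trees by iterated composition, with continuity automatic from the product/coproduct topology on $F(C)_n$.

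Next I would invoke Beck's theorem: $U$ is monadic iff $U$ has a left adjoint, $U$ reflects isomorphisms, and $\Op$ has (and $U$ preserves) coequalizers of $U$-split pairs. The left adjoint is $F$, just constructed. Reflection of isomorphisms is immediate: a morphism of operads that is a continuous bijection with continuous inverse on each $O_n$ has an inverse which automatically respects composition, units, and the $S_n$-actions (these are equations transported along the bijections), so it is an operad isomorphism. The substantive point is the coequalizer condition. Given a parallel pair $f, g \maps P \rightrightarrows Q$ in $\Op$ such that $U(f), U(g)$ admit a split coequalizer $e \maps U(Q) \to A$ in $\NTop$, I would put the quotient operad structure on $A$: since the splitting maps $A \to U(Q)$ and $U(Q) \to U(P)$ let one lift elements of $A$ back to $Q$, define composition of $a_1, \dots$ in $A$ by lifting, composing in $Q$, and pushing down along $e$; the split-coequalizer identities guarantee this is well-defined and independent of the lift, and the $S_n$-action descends the same way. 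Continuity of these operations on $A$ follows because $e$ is a (split, hence) quotient map and the operations on $Q$ are continuous. Then $A$ with this structure is a coequalizer of $f,g$ in $\Op$, and $U$ sends it to the given split coequalizer in $\NTop$.

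The main obstacle is precisely this last step — checking that the operad structure (composition maps, unit, symmetric group actions, and \emph{all} the coherence axioms: associativity, unitality, equivariance) genuinely descends along a $U$-split coequalizer, and that the descended maps are continuous in $\Top$. The algebra here is the standard "structure transports along split coequalizers" argument, but one must be careful that the topology cooperates: a split epimorphism in $\NTop$ is in particular a quotient map levelwise, and finite products of quotient maps in a convenient category of spaces are again quotient maps, which is what makes continuity of the descended composition go through. Once this is in hand, Beck's theorem yields that the canonical comparison functor $\Op \to \NTop^{UF}$ is an equivalence, which is the assertion that $U$ is monadic. (An alternative to verifying Beck's conditions by hand would be to cite a general result — e.g.\ that categories of algebras for a finitary $\Top$-enriched algebraic theory are monadic over the base — but spelling out the split-coequalizer argument keeps the paper self-contained and is not long.)
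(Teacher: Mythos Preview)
Your argument is correct, but the paper takes a much shorter route: it simply observes that operads are the algebras of a colored operad (with one color for each arity $n\in\N$), and then cites Boardman and Vogt's general results on free algebras over colored operads---their Theorem 2.24 for the left adjoint and their Proposition 2.33 for monadicity. So the paper outsources precisely the Beck-style verification you carry out by hand.

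What each approach buys: your direct verification via Beck's theorem is self-contained and makes the mechanism visible (in particular you correctly isolate the one place where the topology matters---that split epimorphisms are quotient maps and that finite products of quotient maps are again quotient maps in a convenient category of spaces, so the descended composition is continuous). The paper's approach is faster but depends on the reader accepting that the colored-operad framework applies here and tracking down the cited propositions. Your parenthetical alternative at the end (``cite a general result\ldots'') is in fact essentially what the paper does, just via colored operads rather than enriched algebraic theories. One small organizational point: you forward-reference Theorem~\ref{thm:C-tree-2} for the construction of $F$, which is fine logically since that theorem does not rely on monadicity, but be aware that the paper proves the present lemma \emph{before} constructing $F$ explicitly, precisely so that it can invoke monadicity later (in Theorem~\ref{thm:counit}) without circularity.
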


\begin{proof} This follows from Boardmann and Vogt's work on free algebras for
colored operads, using the fact that operads are themselves the algebras of a colored operad with one color for each arity $n \in \N$. The existence of a left adjoint for $U$ follows from Boardmann and Vogt's Theorem 2.24, and the monadicity of $U$ follows from their Proposition 2.33.  For the colored operad whose algebras are operads, see \cite{BD}.  This operad began life as a $\Set$-based rather than a topological operad, but we can reinterpret it as a topological operad whose spaces of operations  are discrete, and this has the same algebras in $\Top$.
\end{proof}

The operad whose $n$-ary operations are isomorphism
classes of planar $n$-trees has a simple description in terms of this adjunction: 
\[                 \PTree \iso FU(\Com_+)   \]
where $\Com_+$ is the operad whose algebras are commutative topological monoids.  More concretely, $\Com_+$ is the operad, unique up to isomorphism, whose space of $n$-ary operations is a one-element set for each $n \ge 0$.   More abstractly, $\Com_+$ is the terminal operad.   It thus arises naturally in operad theory---and thus, so does the concept of planar tree.  Indeed, the role of planar trees in operad theory well-known \cite[Sec.\ II.1.9]{MSS}, but we
deduce it from a more general statement in Corollary \ref{cor:PTree}.

As usual, the adjunction between operads and collections gives rise to an operad morphism called the \define{unit}
\[          \iota_C \maps C \to UF(C)  \]
for any operad $O$, and a morphism of collections called the \define{counit}\label{defn:counit}
\[          \epsilon_O \maps FU(O) \to O \]
for any collection $C$.  These are natural transformations.  Moreover, thanks to Lemma \ref{monadic}, any operad $O$ can be described as the coequalizer of this diagram:
\[        
\xymatrix{
FUFU(O)  \ar@<1ex>[rr]^{\epsilon_{FU(O)}}  \ar@<-1ex>[rr]_{FU(\epsilon_O)} &&
FU(O) \ar[r]^>>>>>{\epsilon} & O 
}
\]
This will give an explicit description of $O_n$ as a quotient of $FU(O)_n$ by an equivalence relation.   We start by describing the functor $F$.   For any collection $C$, the operations in $F(C)$ will be `$C$-trees':

\begin{defn}
For any collection $C$, a \define{$C$-labelled planar $n$-tree} is a planar $n$-tree for which each vertex with $k$ children is labelled by an element of $C_k$.
\end{defn}

\begin{defn}
\label{defn:isomorphism}
Given two $C$-labelled planar $n$-trees, we say they are \define{isomorphic} if there is an isomorphism of their underlying planar $n$-trees such that the labelling of each vertex in the first equals the labelling of the corresponding vertex in the second.
\end{defn}

\begin{defn}
\label{defn:CTree}
We define a \define{$C$-$n$-tree} to be an isomorphism class of $C$-labelled planar $n$-trees. We define a \define{$C$-tree} to be a $C$-$n$-tree for any $n = 0,1,2, \dots $. We denote the set of $C$-$n$-trees by \define{$C\Tree_n$}.
\end{defn}

To make $C$-trees into the operations of an operad, we must say how to compose
them.  Instead of fully general composition
\[   f \circ (g_1, \dots, g_n)  \]
it suffices to describe \define{partial composition}:
\[  f\circ_i g=f\circ (1,\dots,1, g, 1,\dots ,1) \qquad  f \in O_n, \; 1 \le i \le n \]
where $g$ appears in the $i$th position.  Knowing partial composites we can
recover all composites, and there is an alternative axiomatization for operads, 
equivalent to the standard one, using partial composition \cite[Def.\ 12]{May97}.  

Partial composition of $C$-trees will be defined using `grafting'.  The rough idea is that in the partial composite $T' \circ_i T$, we glue the root of $T$ to the $i$th  leaf of $T'$.  Then we delete the resulting vertex and combine the two edges incident to it into a single edge.  Here a picture is worth a thousand words:
\[
\begin{tikzpicture}[scale=0.5]
\node at (0,1) {$\bullet$};
\node at (-0.6,1) {$f$};
\node at (0,-0.5) {$0$};
\node at (-1,2.5) {$2$};
\node at (1,2.5) {$1$};
\node at (2,1) {$\circ_1$};
\path[-,font=\scriptsize]
(0,0) edge (0,1)
(0,1) edge (1,2)
(0,1) edge (-1,2);
\end{tikzpicture}
\begin{tikzpicture}[scale=0.5]
\node at (0,1) {$\bullet$};
\node at (-0.6,1) {$g$};
\node at (0,-0.5) {$0$};
\node at (-1,2.5) {$1$};
\node at (1,2.5) {$2$};
\node at (3,1) {$=$};
\path[-,font=\scriptsize]
(0,0) edge (0,1)
(0,1) edge (1,2)
(0,1) edge (-1,2);
\end{tikzpicture}
\qquad
\begin{tikzpicture}[scale=0.5]
\node at (0,1) {$\bullet$};
\node at (-0.6,1) {$f$};
\node at (1,2) {$\bullet$};
\node at (0.5,2) {$g$};
\node at (0,-0.5) {$0$};
\node at (-2,3.5) {$3$};
\node at (0,3.5) {$1$};
\node at (2,3.5) {$2$};
\path[-,font=\scriptsize]
(0,0) edge (0,1)
(0,1) edge (1,2)
(0,1) edge (-2,3)
(1,2) edge (0,3)
(1,2) edge (2,3);
\end{tikzpicture}
\]
\[
\begin{tikzpicture}[scale=0.5]
\node at (0,1) {$\bullet$};
\node at (-0.6,1) {$f$};
\node at (0,-0.5) {$0$};
\node at (-1,2.5) {$2$};
\node at (1,2.5) {$1$};
\node at (2,1) {$\circ_2$};
\path[-,font=\scriptsize]
(0,0) edge (0,1)
(0,1) edge (1,2)
(0,1) edge (-1,2);
\end{tikzpicture}
\begin{tikzpicture}[scale=0.5]
\node at (0,1) {$\bullet$};
\node at (-0.5,1) {$g$};
\node at (0,-0.5) {$0$};
\node at (-1,2.5) {$1$};
\node at (1,2.5) {$2$};
\node at (2,1) {$=$};
\path[-,font=\scriptsize]
(0,0) edge (0,1)
(0,1) edge (1,2)
(0,1) edge (-1,2);
\end{tikzpicture}
\qquad
\begin{tikzpicture}[scale=0.5]
\node at (0,1) {$\bullet$};
\node at (-0.6,1) {$f$};
\node at (-1,2) {$\bullet$};
\node at (-1.5,2) {$g$};
\node at (0,-0.5) {$0$};
\node at (-2,3.5) {$2$};
\node at (0,3.5) {$3$};
\node at (2,3.5) {$1$};
\path[-,font=\scriptsize]
(0,0) edge (0,1)
(0,1) edge (2,3)
(0,1) edge (-1,2)
(-1,2) edge (-2,3)
(-1,2) edge (0,3);
\end{tikzpicture}
\]
The subtlest issue is the labelling of leaves in the tree obtained from grafting.  For a formal definition, we start with grafting for planar trees:

\begin{defn}
Consider a planar $n$-tree $T=(V,E,s,t)$ and a planar $m$-tree $T'=(V',E',s',t')$. For any $1\leq i \leq m$ we define the \define{grafting of $T$ onto $T'$ along $i$} to be the planar $(n+m-1)$-tree  $T'\circ_i T=(\tilde{V},\tilde{E},\tilde{s},\tilde{t})$ where
\begin{itemize}
\item $\tilde{V}=V\sqcup V'$
\item $\tilde{E}=\Bigl(E\setminus\{e_0\}\Bigr)\sqcup\Bigl( E'\setminus\{e_i\}\Bigr)\sqcup \{x\}$, where $e_0$ is the edge of  $T$ with $t(e_0)=0$ and $e_i$ is the edge of $T'$ such that $s'(e_i)=i$
\item $\tilde{s} \maps \tilde{E}\rightarrow \tilde{V}$ is defined by
\[e\mapsto \begin{cases}
s(e) & \text{ if $e\in E$ and $s(e)\in V$}\\
s'(e) & \text{ if $e\in E'$ and $s'(e)\in V'$}\\
s'(e) & \text{ if $e\in E'$ and $1\leq s'(e)\leq i-1$}\\
s(e)+i-1 & \text{ if $e\in E$ and $1\leq s(e)\leq n$}\\
s'(e)+n-1 & \text{ if $e\in E'$ and $i+1\leq s'(e)\leq m$}\\
s(e_0) & \text{ if $e=x$}
\end{cases}\]
\item $\tilde{t}\maps \tilde{E}\rightarrow \tilde{V}$ is defined by
\[ e\mapsto \;\begin{cases} t(e) & \text{ if $e\in E$}\\
t'(e) & \text{ if $e\in E'$}\\
t(e_i) & \text{ if $e=x$}
\end{cases}
\]
\end{itemize}
If in $T$ the order of the children of $t(e_i)$ is $e_1<\dots <e_{i-1}< e_i<e_{i+1}<\dots <e_r$, then the order of its children in $T\circ_i T'$ is $e_1<\dots <e_{i-1}< x<e_{i+1}<\dots <e_r$. The order of the children of all other vertices is unchanged.  

We say that edge $e_0$ is \define{identified} with edge $e_i$.
\end{defn}

Next we define grafting for $C$-labelled planar trees. Suppose we have two $C$-labelled planar trees whose underlying planar trees are $T = (V,E,s,t)$ and $T ' = (V',E',s',t')$.   Then we can make $T \circ_i T'$ into a $C$-labelled planar tree as follows: 
its set of vertices is $V \sqcup V'$, so we label the vertices in $V$ using the 
labelling of $T$, and label those in $V'$ using the labelling of $T'$.  

Grafting is well-defined on isomorphism classes.  We thus obtain partial composition operations for $C$-trees. To make $C\Tree_n$ into the $n$-ary operations of an operad, we also need to give it a right action of the permutation group $S_n$.  We do this by permuting the labels of leaves:

\begin{defn}
Given a $C$-labelled planar $n$-tree $T=(V,E,s,t)$ and a permutation $\sigma\in S_n$, we define the $C$-labelled planar $n$-tree $T\cdot \sigma$ to have the underlying planar $n$-tree $(V,E,s\cdot \sigma, t)$ with same $C$-labelling, where $s\cdot \sigma\maps E\rightarrow V\sqcup \{1,\dots , n\}$ is given by
\[  (s\cdot \sigma)(e) = \left\{ \begin{array}{cl} s(e) & \textrm{if } s(e)\in V \\
\sigma^{-1}(s(e)) & \textrm{otherwise.}  \end{array} \right. \]
We call this \define{relabelling of leaves}.
\end{defn}

This operation defines a right action of the symmetric group $S_n$ on the set of planar $C$-labelled $n$-trees.  One can check that this is well-defined on isomorphism classes, 
so it descends to an action of $S_n$ on the set of $C$-$n$-trees.

For example, if $\sigma \in S_3$ is the cyclic permutation $\left(\begin{array}{ccc}
1 & 2 & 3 \\ 2 & 3 & 1 \end{array}\right)$, we have
\[  
\begin{tikzpicture}[scale=0.6]
\node at (0,0) {$\bullet$};
\node at (-1,1) {$\bullet$};
\node at (0,1) {$\bullet$};
\node at (0,-1) {$\bullet$};
\node at (0,-2.3) {$0$};
\node at (-1.4,1) {$h$};
\node at (-1,2.3) {$2$};
\node at (1,2.3) {$3$};
\node at (2,2.3) {$1$};
\node at (-0.5,-1) {$f$};
\node at (-0.5,0) {$g$};
\node at (-0.4,1) {$i$};
\node at (2.5,0) {$\cdot\,\sigma$};
\node at (4,0) {$=$};
\node at (7,0) {$\bullet$};
\node at (6,1) {$\bullet$};
\node at (7,1) {$\bullet$};
\node at (7,-1) {$\bullet$};
\node at (7,-2.3) {$0$};
\node at (5.6,1) {$h$};
\node at (6,2.3) {$1$};
\node at (8,2.3) {$2$};
\node at (9,2.3) {$3$};
\node at (6.5,-1) {$f$};
\node at (6.5,0) {$g$};
\node at (6.6,1) {$i$};
\path[-,font=\scriptsize]
(0,-1) edge (0,0)
(0,0) edge (-1,1)
(0,0) edge (0,1)
(0,1) edge (-1,2)
(0,1) edge (1,2)
(0,0) edge (2,2)
(0,-2) edge(0,-1)
(7,-1) edge (7,0)
(7,0) edge (6,1)
(7,0) edge (7,1)
(7,1) edge (6,2)
(7,1) edge (8,2)
(7,0) edge (9,2)
(7,-2) edge(7,-1);
\end{tikzpicture}
\]

\begin{lemma}
\label{C-tree}
 Let $C$ be a collection.  There is an operad \define{$C\Tree$} such that:
\begin{itemize}
\item $C\Tree_n$ is the set of $C$-trees with $n$ leaves;
\item composition is given by grafting of trees;
\item the unit is given by the isomorphism class of the tree with no vertices;
\item the permutation group $S_n$ acts on $C\Tree_n$ by relabelling leaves.
\end{itemize}
\end{lemma}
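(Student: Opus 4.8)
The plan is to check directly that grafting and leaf-relabelling obey the axioms of a symmetric operad in $\Top$, in the partial-composition presentation of \cite[Def.\ 12]{May97}: unit, the two associativity identities, and equivariance. First I would fix the topology on $C\Tree_n$. A planar $n$-tree with its leaves labelled $1,\dots,n$ has no nontrivial automorphisms, so $C\Tree_n$ is canonically the coproduct, over isomorphism classes $[\tau]$ of planar $n$-trees, of the products $\prod_{v}C_{k_v}$ of label spaces ($k_v$ the arity of $v$), and we give it the coproduct topology. Under this identification partial composition carries a point of the $[\tau']$-summand of $C\Tree_m$ and a point of the $[\tau]$-summand of $C\Tree_n$ into the $[\tau'\circ_i\tau]$-summand of $C\Tree_{m+n-1}$ by juxtaposing the two label-tuples --- legitimate since $V(\tau'\circ_i\tau)=V(\tau')\sqcup V(\tau)$ --- so it is continuous; the $S_n$-action merely reindexes summands by relabelled trees and is the identity on label-tuples, hence continuous. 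I would also note well-definedness on isomorphism classes: isomorphisms $T_1\to T_2$ of $C$-labelled planar $n$-trees and $T_1'\to T_2'$ of $C$-labelled planar $m$-trees induce an isomorphism $T_1'\circ_i T_1\to T_2'\circ_i T_2$ built functorially from the quadruples $(\tilde V,\tilde E,\tilde s,\tilde t)$ and the vertex-labellings, and the corresponding statement for $T\cdot\sigma$ is immediate since only the source map on leaf edges changes.

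Next I would dispatch the unit laws. Let $\mathbf 1\in C\Tree_1$ be the class of the $1$-tree with no vertices. For $T\in C\Tree_n$ and $1\le i\le n$ one checks $T\circ_i\mathbf 1=T$ and $\mathbf 1\circ_1 T=T$: in each case grafting deletes the unique edge of $\mathbf 1$ together with one leaf edge (resp.\ the root edge) of $T$, and inserts the single new edge $x$; tracing the source/target conventions shows $x$ simply renames the deleted edge of $T$, so the result is isomorphic to $T$ as a $C$-labelled planar tree, the vertex-labellings agreeing since $\mathbf 1$ contributes no vertices. This is the step where one must be careful about the (re)labelling convention for the glued edge $x$.

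Then come the two associativity identities --- the \emph{nested} one $T''\circ_i(T'\circ_j T)=(T''\circ_i T')\circ_{i+j-1}T$, and the \emph{parallel} one expressing that grafting into two distinct leaves of the ambient tree $T''$ may be done in either order, with the appropriate index shifts --- and equivariance: for $\sigma\in S_m$, $\tau\in S_n$, $(T\cdot\sigma)\circ_i T'=(T\circ_{\sigma(i)}T')\cdot(\sigma\circ_i\id_n)$ and $T\circ_i(T'\cdot\tau)=(T\circ_i T')\cdot(\id_m\circ_i\tau)$, where $\sigma\circ_i\tau\in S_{m+n-1}$ is the block permutation moving the $m$ blocks (the $i$th of width $n$) by $\sigma$ and permuting the $i$th block internally by $\tau$. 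In each of these, both sides have vertex set the disjoint union of the vertex sets of the factors with the inherited labelling, and edge set obtained from the disjoint union of edge sets by deleting the absorbed root edges and inserting the glue edges; the only content is that the two prescriptions for the leaf-source map agree. So I would reduce each identity to a comparison of leaf-source maps: for associativity, to checking that the composite index shifts match (e.g.\ in the nested case a leaf of $T$ labelled $p$ ends up labelled $p+(i-1)+(j-1)$ on both sides, and the leaves of $T'$, $T''$ are shifted consistently); for equivariance, to evaluating each side on a single leaf edge.

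The main obstacle is not any one of these checks but the volume of index bookkeeping in associativity and equivariance: keeping straight how the leaf set $\{1,\dots,m+n-1\}$ of a grafted tree splits into the blocks inherited from the two factors, and how iterated graftings shift those blocks. The clean route is to prove one auxiliary lemma recording the leaf order of $T'\circ_i T$ in terms of those of $T$ and $T'$ --- leaves $1,\dots,i-1$ of $T'$, then all leaves of $T$ in order, then leaves $i+1,\dots,m$ of $T'$ shifted up by $n-1$ --- together with the analogous statement for relabelling; after that all four families of axioms follow by inspection, and the lemma is proved.
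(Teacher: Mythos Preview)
Your proposal is correct and is essentially what the paper does: the paper's proof is a single sentence stating that the result ``follows via a straightforward verification of the operad axioms written in terms of partial composition \cite[Def.\ 12]{May97},'' and you have supplied a careful outline of precisely that verification. If anything, your write-up is more thorough than the paper's, since you also address the topology, well-definedness on isomorphism classes, and the bookkeeping strategy for the index shifts.
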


\begin{proof} 
This follows via a straightforward verification of the operad axioms written
in terms of partial composition \cite[Def.\ 12]{May97}.
\end{proof}

Next we show that $C\Tree$ is the free operad on the collection $C$.  There is
a morphism of collections 
\[ \iota \maps C \to U(C\Tree) \]
that sends any element $f\in C_n$ to the isomorphism class of the corolla with its $n$ leaves ordered so that $1< \cdots < n$, and with its vertex labelled by 
$f$.  For example, if $f \in C_3$, then
\[  
\begin{tikzpicture}[scale=0.5]
\node (2) at (0,1){$\bullet$};
\node at (0.7,1){$f$};
\node at (-1,2.5){$1$};
\node at (0,2.5){$2$};
\node at (1,2.5){$3$};
\node at (0,-0.5){$0$};
\node at (-4,1){$\iota(f)$};
\node at (-2.5,1){$=$};
\path[-,font=\scriptsize]
(-1,2) edge (0,1)
(0,2) edge (0,1)
(1,2) edge (0,1)
(0,1) edge (0,0);
\end{tikzpicture} 
\]
where the picture shows the isomorphism class of the corolla with 3 leaves ordered so that $1 < 2 < 3$.   We claim that $\iota$ exhibits $C\Tree$ as the free
operad on $C$.  In other words:

\begin{theorem} 
\label{thm:C-tree-2}
Let $C$ be a collection.  For any operad $O$ and any morphism of collections $\phi \maps C \to U(O)$, there exists a unique operad morphism $\overline{\phi} \maps 
C\Tree \to O$ making this triangle commute:
\[  
\xymatrix@R=3em@C=2.5em{
C \ar[d]_\iota  \ar[dr]^\phi   \\
U(C\Tree) \ar[r]_{U(\overline{\phi})} & U(O) 
}
\]
Thus, $C\Tree$ is the free operad on $C$.
\end{theorem}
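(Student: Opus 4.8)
The plan is to build $\overline{\phi}$ by recursion on the number of vertices of a tree, then check that the resulting map is an operad morphism, that the triangle commutes, and that it is unique; throughout I use that $C\Tree$ is an operad (Lemma \ref{C-tree}). For a $C$-labelled planar $n$-tree $T=(V,E,s,t)$ I define an element $\mathrm{ev}(T)\in O_n$ as follows. If $V=\emptyset$ then $T$ is the trivial $1$-tree and I set $\mathrm{ev}(T)=1\in O_1$. If $V\neq\emptyset$, pick any vertex: the last edge of a directed edge path from it to $0$ must be the root edge $e_0$, so the source of $e_0$ is a vertex, the \emph{root vertex} $r$, say of arity $k$ and carrying label $a\in C_k$. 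Its children $c_1<\cdots<c_k$ determine planar subtrees $T_1,\dots,T_k$, where $T_j$ is the trivial $1$-tree if $s(c_j)$ is a leaf, and otherwise is the full $C$-labelled planar subtree of $T$ rooted at $s(c_j)$, with its $n_j$ leaves relabelled $1,\dots,n_j$ in planar order. Each $T_j$ has fewer vertices than $T$, and since every leaf of $T$ lies in exactly one subtree, $n_1+\cdots+n_k=n$; so I may set
\[
\mathrm{ev}(T)=\Bigl(\phi(a)\circ\bigl(\mathrm{ev}(T_1),\dots,\mathrm{ev}(T_k)\bigr)\Bigr)\cdot\sigma,
\]
where $\sigma\in S_n$ records the order in which the leaves of $T$ appear from left to right. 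This uses only the combinatorial data and the vertex labels, so it is invariant under isomorphism of $C$-labelled planar $n$-trees and descends to a map $\overline{\phi}\maps C\Tree_n\to O_n$. Evaluating on $\iota(f)$ for $f\in C_n$ — a corolla whose subtrees are all trivial and whose leaf permutation is the identity — gives $\overline{\phi}(\iota(f))=\phi(f)$, which is exactly commutativity of the triangle.

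Next I would verify that $\overline{\phi}$ is a morphism of operads. Preservation of the unit is the first clause of the recursion. Equivariance, $\overline{\phi}(T\cdot\tau)=\overline{\phi}(T)\cdot\tau$, is a straightforward induction on $|V|$: relabelling the leaves of $T$ changes only the permutation $\sigma$ in the recursion, by right multiplication by $\tau$, and the equivariance axioms of $O$ finish it. Preservation of partial composition, $\overline{\phi}(T'\circ_i T)=\overline{\phi}(T')\circ_i\overline{\phi}(T)$, I would prove by induction on the number of vertices of $T'$: if $T'$ is trivial then $T'\circ_i T=T$ and there is nothing to check; otherwise the root vertex of $T'\circ_i T$ is that of $T'$, and of the subtrees of $T'$ exactly one — the one carrying the $i$th leaf of $T'$ — has $T$ grafted into it, so expanding $\mathrm{ev}$ on both sides and using the associativity and equivariance axioms of $O$ to move the bookkeeping permutations past the composites reduces the identity to the inductive hypothesis, the base case $T'=\iota(a)$ being essentially the definition of $\mathrm{ev}$. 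Finally $\overline{\phi}$ is continuous because $C\Tree_n$ is the topological coproduct, over isomorphism classes of planar $n$-trees, of the products $\prod_{v}C_{\mathrm{ar}(v)}$, and on each summand $\overline{\phi}$ is a finite composite of the continuous structure maps of $O$ with the continuous components of $\phi$.

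For uniqueness, let $\psi\maps C\Tree\to O$ be any operad morphism with $U(\psi)\circ\iota=\phi$. The root decomposition used above shows, by induction on the number of vertices, that every $C$-$n$-tree with at least one vertex is obtained from the corollas $\iota(a)$ by finitely many partial compositions and leaf relabellings, while the unique $C$-tree with no vertices is the unit of $C\Tree_1$. Since $\psi$ preserves partial composition, the unit, and the $S_n$-actions, and satisfies $\psi(\iota(a))=\phi(a)=\overline{\phi}(\iota(a))$, its value on every $C$-tree coincides with $\overline{\phi}$.

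The step I expect to be the main obstacle is preservation of partial composition: grafting reindexes leaves in a fiddly way (visible in the explicit formula for $T'\circ_i T$ in Section \ref{sec:free}), and matching this against iterated composition in $O$ requires careful bookkeeping of the auxiliary permutations together with repeated appeals to the associativity and equivariance axioms. Everything else is either immediate from the recursion or a routine induction on the number of vertices.
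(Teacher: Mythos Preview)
Your proof is correct, and the overall strategy---define the extension, check it is an operad morphism, observe uniqueness by generation---is sound. The one place you flag as delicate, preservation of partial composition, really does go through by induction on the number of vertices of $T'$ together with the associativity and equivariance axioms of $O$; you have identified the right skeleton.

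The paper's proof is organized differently. Rather than defining $\overline{\phi}$ by recursion on the root vertex, it factors $\overline{\phi}$ as a composite $\epsilon_O\circ\phi_\star$: first $\phi_\star\colon C\Tree\to U(O)\Tree$ simply replaces each vertex label $f$ by $\phi(f)$, and then $\epsilon_O\colon U(O)\Tree\to O$ is built by iteratively \emph{contracting internal edges}---each contraction merges two adjacent vertices using a single partial composition in $O$. Associativity of composition in $O$ shows the result is independent of the order of contractions. To check that $\epsilon_O$ preserves partial composition, the paper introduces the notion of contracting a \emph{subtree} and argues that contracting $T'\circ_i T$ all at once equals first contracting the copy of $T$ inside it and then contracting the rest; this sidesteps the explicit induction on $|V(T')|$ and the leaf-reindexing bookkeeping you anticipate.

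What each approach buys: your root-recursion is more elementary and self-contained, and makes continuity transparent. The paper's contraction approach packages the construction as the counit $\epsilon_O$ of the free--forgetful adjunction, which is exactly the object analyzed in the subsequent Theorem~\ref{thm:counit} (characterizing which $O$-trees have the same image under $\epsilon_O$); so their proof does double duty by setting up that later argument.
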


\begin{proof} 
The morphism $\overline{\phi} \maps C\Tree \to O$ making the above triangle
commute is clearly unique, because every operation in $C\Tree$ is
obtained from operations of the form $\iota(f)$ by composition and permutations.  
The issue is to show that an operad morphism $\overline{\phi}$ making the triangle 
commute actually exists.

For any morphism of collections $\psi\maps C\to D$, we can define a map $\psi_\star$ from the set of $C$-trees to the set of $D$-trees, mapping any $C$-tree $T$ to the $D$-tree obtained from $T$ by substituting the label $f$ of any vertex of $T$ by $\psi(f)$.  This gives an operad morphism $\psi_\star \maps C\Tree \to D\Tree$.  In particular, starting from $\phi \maps C \to U(O)$ we obtain an operad morphism
\[        \phi_\star \maps C\Tree \to U(O)\Tree  .\]
We shall construct an operad morphism
\[        \epsilon_O \maps U(O)\Tree \to O  \]
with the following property: $\epsilon_O$ maps the isomorphism class of the corolla with its $n$ leaves ordered so that $1< \cdots < n$ and its vertex labelled by 
$f \in U(O)_n$ to the corresponding operation $f \in O_n$.   It will follow that the
composite
\[     \overline{\phi} = \epsilon_O \phi_\star  \]
makes the triangle commute.

We begin by saying what it means to `contract' an edge of a planar tree.  Before giving the definition, we give an example.  Contracting the edge $e$ in the planar tree at left, we obtain the planar tree at right:
\[
\begin{tikzpicture}[scale = 0.9]
\node at (-3.5,3.2) {$4$};
\node at (0,-0.5) {$\bullet$};
\node at (3.5,3.2) {$2$};
\node at (0,1) {$\bullet$};
\node at (-2,3.2) {$3$};
\node at (2,3.2) {$1$};
\node at (-0.3,-0.5) {$$};
\node at (-0.5,1) {$t(e)$};
\node at (0.7,1.4) {$e$};
\node at (1,2) {$\bullet$};
\node at (0,-1.2) {$0$};
\node at (0.5,2.5) {$\bullet$};
\node at (1.5,2) {$s(e)$};
\node at (4,1) {$\rightsquigarrow$};

\node at (5,3.2) {$4$};
\node at (8,0) {$\bullet$};
\node at (11,3.2) {$2$};
\node at (8,1.5) {$\bullet$};
\node at (6.5,3.2) {$3$};
\node at (8,2.5) {$\bullet$};
\node at (9.5,3.2) {$1$};
\node at (7.7,0) {$$};
\node at (7.7,1.3) {$x$};
\node at (7.7,2.5) {$$};
\node at (8,-1.2) {$0$};

\path[-,font=\scriptsize]
(0,-1) edge (0,0)
(0,-0.5) edge (-3.5,3)
(0,-0.5) edge (3.5,3)
(0,-0.5) edge (0,1)
(0,1) edge (-2,3)
(0,1) edge (1,2)
(1,2) edge (0.5,2.5)
(1,2) edge (2,3)

(8,-1) edge (8,0)
(8,0) edge (5,3)
(8,0) edge (11,3)
(8,0) edge (8,2)
(8,1.5) edge (6.5,3)
(8,1.5) edge (8,2.5)
(8,1.5) edge (9.5,3);
\end{tikzpicture}
\]
In the resulting tree, the vertices $s(e)$ and $t(e)$ are gone: they have coalesced to form a new vertex $x$.

\begin{defn}
Given an $n$-tree $T=(V,E,s,t)$, we define \define{$\inc(v)$}
to be the set of children of the vertex $v \in V$.
\end{defn}

\begin{defn}
Given an $n$-tree $T=(V,E,s,t)$,
we call an edge $e\in E$ \define{internal} if its source and target both lie in $V$.
We call the edges that are not internal \define{external}. 
\end{defn}

\begin{defn}
Consider a planar $n$-tree $T=(V,E,s,t)$ with an internal edge $e$. 
We define the planar $n$-tree $T / e =(V_e,E_e,s_e,t_e)$, called 
\define{$T$ with its edge $e$ contracted}, as follows:
\begin{itemize}
\item the vertex set $V_e$ is given by $\bigl(V - \{s(e),t(e)\}\bigr)\sqcup\{x\}$; 
\item the edge set $E_e$ is given by $E -\{e\}$;
\item The maps $s_e$ and $t_e$ are defined as follows:\\
\[ \begin{array}{ccl} s_e(e') &=& \begin{cases}
s(e') & \text{ if } s(e')\ne t(e)\\
x & \text{ otherwise}
\end{cases} 
\\
\\
t_e(e') &=& \begin{cases}
t(e') &\text{ if $t(e')\ne t(e)$ and $t(e')\ne s(e) $}\\ 
x & \text{ otherwise}
\end{cases}
\end{array}
\]
\end{itemize}
The order on the children of a vertex in $V_e$ is defined as it was in $T$ if that vertex
lies in $V$.  For the new vertex $x$, the order is defined as follows.  The vertex $t(e)$ has $k_1>0$ children by construction, while if $s(e)$ if has no children then $x$ has none, so we do not need to define an order on its children. Therefore suppose that  $s(e)$ has $k_2>0$ children, and further that $e$ is the $i$th child of $t(e)$.  The planar structure on $T$ induces order-preserving bijections 
\[  \phi_1\maps \inc(t(e))\to [k_1], \qquad \phi_2\maps \inc(s(e))\to [k_2] \] 
where $[n]$ is the set $\{1,\dots, n\}$ with its standard linear ordering.  
Using these we define a bijection
\[
\phi_1\circ_i \phi_2\maps \inc(t(e))\sqcup \inc(s(e)) \setminus\{e\}\to [k_1+k_2-1]
\] 
as follows:
\[
\phi(y)=\begin{cases}
\phi_1(y) & \text{ if $y\in \inc(t(e))$ and $1\leq \phi_1(y)\leq i-1$}\\
\phi_2(y)+i-1 & \text{ if $y\in \inc(s(e))$}\\
\phi_1(y)+k_1-1 &\text{ if $y\in \inc(t(e))$ and $\phi_1(y)>i$.}
\end{cases}
\]

This induces a linear order on $\inc (x)$.
\end{defn}

More generally, we can define contraction for $U(O)$-labelled planar trees for any operad $O$.  Suppose $T$ is a $U(O)$-labelled planar tree with an internal edge $e$.  We define the $U(O)$-labelled planar tree $T / e$ as follows.  Its underlying planar tree is the underlying planar tree of $T$ with its edge $e$ contracted.  We label all the vertices
other than new vertex $x$ just as in $T$.  As for $x$, suppose that the vertices $t(e)$ and $s(e)$ are labelled by the operations $f\in O_k$ and $g \in O_{\ell}$, respectively,
and suppose that $e$ is the $i$th child of $t(e)$.   Then we label the vertex $x$
by the operation $f\circ_i g$. This yields a $U(O)$-tree: we have $f\circ_ig\in O_{k + \ell-1}$, and by definition $x$ has $k+\ell-1$ children.

Contraction is well-defined on isomorphism classes, so we can define contraction
for $U(O)$-trees.  For example, if we contract the edge between $f$ and $g$ in the $U(O)$-tree at left, we get the one at right:
\[
\begin{tikzpicture}[scale = 0.9]
\node at (-3.5,3.2) {$4$};
\node at (0,-0.5) {$\bullet$};
\node at (3.5,3.2) {$2$};
\node at (0,1) {$\bullet$};
\node at (-2,3.2) {$3$};
\node at (1,2) {$\bullet$};
\node at (2,3.2) {$1$};
\node at (-0.3,-0.5) {$e$};
\node at (-0.3,0.9) {$f$};
\node at (0.2,2.5) {$h$};
\node at (0,-1.2) {$0$};
\node at (0.5,2.5) {$\bullet$};
\node at (1.25,2) {$g$};
\node at (4,1) {$\rightsquigarrow$};

\node at (5,3.2) {$4$};
\node at (8,0) {$\bullet$};
\node at (11,3.2) {$2$};
\node at (8,1.5) {$\bullet$};
\node at (6.5,3.2) {$3$};
\node at (8,2.5) {$\bullet$};
\node at (9.5,3.2) {$1$};
\node at (7.7,0) {$e$};
\node at (7.5,1.3) {$f \circ_2 g$};
\node at (7.7,2.5) {$h$};
\node at (8,-1.2) {$0$};

\path[-,font=\scriptsize]
(0,-1) edge (0,0)
(0,-0.5) edge (-3.5,3)
(0,-0.5) edge (3.5,3)
(0,-0.5) edge (0,1)
(0,1) edge (-2,3)
(0,1) edge (1,2)
(1,2) edge (0.5,2.5)
(1,2) edge (2,3)

(8,-1) edge (8,0)
(8,0) edge (5,3)
(8,0) edge (11,3)
(8,0) edge (8,2)
(8,1.5) edge (6.5,3)
(8,1.5) edge (8,2.5)
(8,1.5) edge (9.5,3);
\end{tikzpicture}
\]

 Iterating this operation, we can assign to any $U(O)$-tree $T$ with $n$ leaves a unique $U(O)$-tree which is a corolla with $n$ leaves and with the unique vertex labelled by the composite of all the operations in $O$ labelling vertices of $T$. This assignment does not depend on the order in which we contract the internal edges, since the composition in $O$ is associative.  We denote the label of the vertex of this corolla by $\epsilon_O(T)$.  

We claim that the resulting map 
\[               \epsilon_O \maps U(O)\Tree \to O  \]
is an operad morphism.  To show this, the only nontrivial task is to show that
\[          \epsilon_O(T' \circ_i T) = \epsilon_O(T') \circ_i \epsilon_O(T)  \]
when $T$ and $T'$ are $U(O)$-trees.  To do this, we note that instead of contracting all the
internal edges of a tree, we could contract only those in a subtree.   Here we borrow
a definition from Fresse \cite[A 1.5]{Fre}:

\begin{defn}\label{defn:subtree}
A \define{subtree} $S=(V_S,E_S,\inc_S,s_S,t_S)$ of a planar $n$-tree $T=(V,E,s,t)$ is given by:
\begin{itemize}
\item a set of vertices $V_S\subseteq V$,
\item a set of edges $E_S\subseteq E$,
\item a set $\inc_S \subseteq V\sqcup \{1,\dots , n\}$ such that $\inc_S \cap V_S =\emptyset$,
\item an element $0_S\in V$ such that $0_S \notin V_S$ and such that there is a unique edge $e_0$ in $E_S$ with $t(e_0)=0_S$,
\item $s_S = s|_{E_S}$ and $t_S = t|_{E_S}$. 
\end{itemize}
This data satisfies the following requirement: an edge $e$ is in $E_S$ if and only if $t(e)\in V_S\sqcup \{0_S\}$ if and only if $s(e)\in V_S \sqcup \inc_S$.
\end{defn}
The last requirement in the definition ensures that in a subtree $S$ there is a unique directed edge path from any vertex to $0_S$, and also that if a vertex is in $V_S$ then  all its children and the edge with this vertex as its source are in $E_S$. Furthermore, a subtree is completely determined by its set of vertices or its set of edges, as noted by Fresse \cite[A 1.6]{Fre}.
We also note that we have modified Fresse's definition slightly, to ensure that all trees are subtrees of themselves.  Fresse requires the set of vertices $V_S$ to be non-empty so that trivial trees are not allowed to be subtrees.  Thanks to the  last requirement in the definition, our modification  allows trivial trees to be subtrees only of themselves.

The definition of subtree can be generalized to $C$-labelled planar trees and further to $C$-trees, where the subtree
inherits its labels from the original tree.  For example, given this $C$-3-tree:
\[ \begin{tikzpicture}[scale=0.6]
\node at (-1,1) {$\bullet$};
\node at (0,0) {$\bullet$};
\node at (-0.6,0) {$$};
\node at (-1.3,1) {$$};
\node at (-2,2.3) {$2$};
\node at (0,2.3) {$1$};
\node at (2,2.3) {$3$};
\node at (-0.5,0) {$f$};
\node at (-1.5,1) {$g$};
\node at (0,-1.3) {$0$};
\path[-,font=\scriptsize]
(0,0) edge (-1,1)
(0,0) edge (0,-1)
(0,0) edge (2,2)
(-1,1) edge (0,2)
(-1,1) edge (-2,2);
\end{tikzpicture}
\] 
this is a $C$-subtree:
\[ 
\begin{tikzpicture}[scale = 0.6]
\node at (-0.7,-0.3) {$0_S$};
\node at (-1,1) {$\bullet$};
\node at (-1.3,1) {$$};
\node at (-2,2.3) {$2$};
\node at (0,2.3) {$1$};
\node at (-1.5,1) {$g$};
\path[-,font=\scriptsize]
(-1,0) edge (-1,1)
(-1,1) edge (0,2)
(-1,1) edge (-2,2);
\end{tikzpicture}
\] 
while this is not: 
\[
\begin{tikzpicture}[scale = 0.6]
\node at (0,-1.3) {$0_S$};
\node at (0,0) {$\bullet$};
\node at (-0.5,0) {$f$};
\node at (0,1.3) {$3$};
\path[-,font=\scriptsize]
(0,0) edge (0,-1)
(0,0) edge (0,1);
\end{tikzpicture}
\]
because an edge that is a child of the vertex labelled $f$ is not included.

Given a tree $T$ with a subtree $S$, call an edge $e\in E_S$ \define{internal to} $S$ if $s(e)$ and $t(e)$ lie in $V_S$, and \define{external to} $S$ otherwise.  As noted by Fresse \cite{Fre}, contracting the edges internal to a subtree $S$ of a $U(O)$-tree we obtain another $U(O)$-tree.  We call this operation \define{contraction of the subtree $S$}. For example, we can contract the subtree containing the vertices in the green ellipse at left, and obtain the $U(O)$-tree at right:
\[
\begin{tikzpicture}
\node (1) at (0,0) {$\bullet$};
\node at (0.3,0) {$f$};
\node at (-1,1) {$\bullet$};
\node at (-1.3,1) {$g$};
\node (2) at (1,1) {$\bullet$};
\node at (1.3,1) {$h$};
\node at (0.5,2) {$\bullet$};
\node at (0.5,2.3) {$k$};
\node at (0,-1.2) {$0$};
\node at (-1.5,2.7) {$2$};
\node at (-0.5,2.7) {$1$};
\node at (1,2.7) {$4$};
\node at (1.5,2.7) {$3$};
\path[-,font=\scriptsize]
(0,0) edge (0,-1)
(0,0) edge (-1,1)
(0,0) edge (1,1)
(1,1) edge (0.5,2)
(1,1) edge (1,2.5)
(1,1) edge (1.5,2.5)
(-1,1) edge (-1.5,2.5)
(-1,1) edge (-0.5,2.5);
  \ellipsefoci{draw,green}{0,0}{2,2.8}{1.2}
\end{tikzpicture}\qquad
\raisebox{4em}{
\begin{tikzpicture}
\node at (0,5) {$\rightsquigarrow$};
\end{tikzpicture}}\qquad
\begin{tikzpicture}
\node (1) at (0,0) {$\bullet$};
\node at (1,0) {$f\circ_2(h \circ_1 k)$};
\node at (-1,1) {$\bullet$};
\node at (-1.3,1) {$g$};
\node at (0,-1.2) {$0$};
\node at (-1.5,2.7) {$2$};
\node at (-0.5,2.7) {$1$};
\node at (0.5,2.7) {$4$};
\node at (1.5,2.7) {$3$};
\path[-,font=\scriptsize]
(0,0) edge (0,-1)
(0,0) edge (-1,1)
(0,0) edge (0.5,2.5)
(0,0) edge (1.5,2.5)
(-1,1) edge (-1.5,2.5)
(-1,1) edge (-0.5,2.5);
\end{tikzpicture}
\]

Now we can check that 
\[          \epsilon_O(T' \circ_i T) = \epsilon_O(T') \circ_i \epsilon_O(T)  \]
when $T$ and $T'$ are $U(O)$-trees.  At left, we first graft $T$ onto $T'$ and 
then contract the resulting $U(O)$-tree.  Thanks to the associativity of operadic composition, this is the same as grafting $T$ onto $T'$, then contracting the subtree $T$ of $T' \circ_i T$, and then contracting the resulting $U(O)$-tree.  But this is the same
as contracting $T'$, contracting $T$, and then composing the operations
in $O$ that label the two resulting corollas.  This is the expression at right. This completes the proof of Theorem \ref{thm:C-tree-2}.
\end{proof}

The simplest case of Theorem \ref{thm:C-tree-2} is when $C = U(\Com_+)$, where $\Com_+$
is the terminal operad:

\begin{corollary}
\label{cor:PTree}
Let $P\Tree$ be the operad whose $n$-ary operations are isomorphism classes of
planar $n$-trees, with composition defined by grafting and permutation group actions
given by relabelling leaves.  Then $P\Tree \cong FU(\Com_+)$.  
\end{corollary}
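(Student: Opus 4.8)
The plan is to obtain this as an immediate specialization of Theorem \ref{thm:C-tree-2}, taking the collection $C$ to be $U(\Com_+)$. That theorem says $C\Tree$ is the free operad on $C$ for every collection $C$, so $C\Tree$ is canonically (and naturally in $C$) a model for the value $F(C)$ of the left adjoint $F$ of Lemma \ref{monadic}; in particular $FU(\Com_+)\iso U(\Com_+)\Tree$. It then remains to identify $U(\Com_+)\Tree$ with $P\Tree$ as operads.

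For that identification, first I would use that $\Com_+$ is the terminal operad, so $U(\Com_+)_k$ is a one-point space for every $k\ge 0$. Hence in a $U(\Com_+)$-labelled planar $n$-tree each $k$-ary vertex admits exactly one label, so the labelling carries no information, and the notion of isomorphism of $U(\Com_+)$-labelled planar $n$-trees (Definition \ref{defn:isomorphism}) collapses to isomorphism of the underlying planar $n$-trees. Therefore the assignment sending a $U(\Com_+)$-$n$-tree (Definition \ref{defn:CTree}) to its underlying isomorphism class of planar $n$-trees is a bijection $U(\Com_+)\Tree_n \to P\Tree_n$.

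Next I would check this bijection respects all the operad structure. By construction (Lemma \ref{C-tree}) grafting of $C$-labelled planar trees has as underlying planar tree the grafting of the underlying planar trees, the $S_n$-action merely relabels leaves of the underlying planar tree, and the unit is the class of the tree with no vertices; these are precisely the composition, symmetric group actions, and unit declared for $P\Tree$ in the statement. Since every space in sight is discrete (being assembled from the one-point spaces $U(\Com_+)_k$), continuity is automatic in both directions, so the bijection is an isomorphism of topological operads. Composing with $FU(\Com_+)\iso U(\Com_+)\Tree$ yields $P\Tree \iso FU(\Com_+)$.

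I do not expect a real obstacle: the argument is bookkeeping built on top of Theorem \ref{thm:C-tree-2}. The only point needing a moment's care is verifying that isomorphism of $U(\Com_+)$-labelled planar trees genuinely reduces to isomorphism of bare planar trees — that the trivial labels neither create nor destroy automorphisms — which is immediate from Definition \ref{defn:isomorphism} once one observes each $U(\Com_+)_k$ is a singleton.
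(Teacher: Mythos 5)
Your proposal is correct and follows essentially the same route as the paper: invoke Theorem \ref{thm:C-tree-2} for freeness of $U(\Com_+)\Tree$, and use that $\Com_+$ has a unique operation in each arity to identify $U(\Com_+)\Tree$ with $P\Tree$ via Lemma \ref{C-tree}. Your extra care in checking that the bijection respects grafting, the symmetric group actions, the unit, and the (discrete) topology just makes explicit what the paper leaves as a brief remark.
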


\begin{proof}  
Since $\Com_+$ has just one operation of each arity, there is always
just one way to label vertices of a planar tree by operations of $\Com_+$.  
Thus, an operation $U(\Com_+)\Tree$ can be naturaly identified with an 
isomorphism class of planar trees, and by Lemma \ref{C-tree} we have
$U(\Com_+)\Tree \cong P\Tree$ as operads.  
The result then follows from Theorem \ref{thm:C-tree-2}.
\end{proof}

More generally, we make the following definition, closely tied to Definition \ref{defn:CTree}:

\begin{defn}
\label{defn:OTree}
For any operad $O$, we define an \define{$O$-$n$-tree} to be a $U(O)$-$n$-tree, where $U(O)$ is the underlying collection of $O$.  We define an \define{$O$-tree} to be a
$U(O)$-tree.
\end{defn}

Thus, an $O$-tree is an operation in $FU(O)$.   Recall that $\Com_+$ is the 
terminal operad, so there is a unique operad morphism $!_O \maps O \to \Com_+$.
This in turn gives a morphism
\[        FU(!_O)) \maps FU(O) \to FU(\Com_+)
\iso \PTree  \]
sending each $O$-tree to the isomorphism class of its underlying 
planar $n$-tree.   For example:
\[
\begin{tikzpicture}[scale = 0.7]
\node at (-3.5,0){$FU(!_O) \maps$};
\node at (-1,0){$\bullet$};
\node at (-0.3,0){$g_1$};
\node at (1,0){$\bullet$};
\node at (1.4,0){$g_2$};
\node at (2.5,0){$\bullet$};
\node at (3,0){$g_3$};
\node at (1,-1){$\bullet$};
\node at (1.5,-1){$f$};
\node at (-1.5,1.4){$3$};
\node at (-0.5,1.4){$2$};
\node at (2.5,1.4){$1$};
\node at (4.5,0){$\mapsto$};
\node at (1,-2.5){$0$};                    
\path[-,font=\scriptsize]
(-1.5,1) edge (-1,0)
(-0.5,1) edge (-1,0)
(2.5,1) edge (2.5,0)
(-1,0) edge (1,-1)
(1,0) edge (1,-1)
(2.5,0) edge (1,-1)
(1,-1) edge (1,-2);
\end{tikzpicture}
\qquad
\begin{tikzpicture}[scale = 0.7]
\node at (-1,0){$\bullet$};
\node at (1,0){$\bullet$};
\node at (2.5,0){$\bullet$};
\node at (1,-1){$\bullet$};
\node at (-1.5,1.4){$3$};
\node at (-0.5,1.4){$2$};
\node at (2.5,1.4){$1$};
\node at (1,-2.5){$0$};                    
\path[-,font=\scriptsize]
(-1.5,1) edge (-1,0)
(-0.5,1) edge (-1,0)
(2.5,1) edge (2.5,0)
(-1,0) edge (1,-1)
(1,0) edge (1,-1)
(2.5,0) edge (1,-1)
(1,-1) edge (1,-2);
\end{tikzpicture}
\]
This clarifies the special role of planar $n$-trees in the theory of operads.

On the other hand, the counit of the adjunction between operads and collections
\[             \epsilon_O \maps FU(O) \to O  \]
maps each $O$-tree to an operation in $O$.   For example:
\[
\begin{tikzpicture}[scale = 0.7]
\node at (-2.5,0){$\epsilon_O \maps$};
\node at (-1,0){$\bullet$};
\node at (-0.3,0){$g_1$};
\node at (1,0){$\bullet$};
\node at (1.4,0){$g_2$};
\node at (2.5,0){$\bullet$};
\node at (3,0){$g_3$};
\node at (1,-1){$\bullet$};
\node at (1.5,-1){$f$};
\node at (-1.5,1.4){$3$};
\node at (-0.5,1.4){$2$};
\node at (2.5,1.4){$1$};
\node at (4.5,0){$\mapsto$};
\node at (1,-2.5){$0$};                    
\path[-,font=\scriptsize]
(-1.5,1) edge (-1,0)
(-0.5,1) edge (-1,0)
(2.5,1) edge (2.5,0)
(-1,0) edge (1,-1)
(1,0) edge (1,-1)
(2.5,0) edge (1,-1)
(1,-1) edge (1,-2);
\end{tikzpicture}
\qquad
\begin{tikzpicture}[scale = 0.7]
\node at (1,-1){$\bullet$};
\node at (3,-1){$f \circ (g_1, g_2, g_3)$};
\node at (-0.5,1.4){$3$};
\node at (1,1.4){$2$};
\node at (2.5,1.4){$1$};
\node at (1,-2.5){$0$};                    
\path[-,font=\scriptsize]
(-0.5,1) edge (1,-1)
(1,1) edge (1,-1)
(2.5,1) edge (1,-1)
(1,-1) edge (1,-2);
\end{tikzpicture}
\]
We can use this to describe operations in $O$ as equivalence classes of $O$-trees, in a way that will be useful later.

First note that we can act on a planar treee by permuting the children of a vertex. More precisely:

\begin{defn}\label{defn:perm subtree}
 Suppose $S$ is a subtree of a planar $n$-tree $T$, and that $S$ consists of a single vertex. Then the linear order on $\inc_S$ gives an order-preserving isomorphism $f\maps \inc_S\to [k]$ for some $k\geq 0$. Define the \define{permutation of $S$ by $\sigma$} to be the planar $n$-tree \define{$S\cdot \sigma$} with same underlying $k$-tree as $S$ and linear order on $\inc_S$ given by $\sigma^{-1}\circ f$. 
 \end{defn}
 \noindent
 
This definition can be generalized in a straighforward way to $C$-labelled planar $n$-trees and further to $C$-$n$-trees. We are now ready to state our result:

\begin{theorem}
\label{thm:counit}
Let $O$ be an operad.  Then $\epsilon_O$ maps two $O$-trees to the same operation of $O$ if and only if we can go from one $O$-tree to the other by a finite sequence of the following moves:

\begin{enumerate}
\item \label{item:composition}
Given any $O$-tree, replace any subtree consisting of a vertex together with its children and their source vertices by its contraction.

\item \label{item:unit} For any $O$-tree,  replace any edge by a corolla with one vertex labelled by the identity $1 \in O_1$.

\item \label{item:symmetry} For any $O$-tree, replace any subtree $S$ given by exactly one vertex $v$ labelled by $f\cdot \sigma$, where $\sigma \in S_k$ and $f \in O_k$, by the subtree obtained by permuting $S$ by $\sigma$ and substituting the label of $v$ by $f$.

\end{enumerate}
\end{theorem}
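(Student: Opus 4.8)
The plan is to prove the two implications separately. Throughout, write $T_1\sim T_2$ to mean that $T_1$ and $T_2$ are connected by a finite sequence of the moves (1)--(3). The ``only if'' part reduces to checking that each individual move leaves $\epsilon_O$ unchanged, so that $T_1\sim T_2$ implies $\epsilon_O(T_1)=\epsilon_O(T_2)$. For the ``if'' part, I would show that, using the moves, every $O$-tree can be brought to a corolla with leaves in the standard order $1<\cdots<n$, and that $\epsilon_O$ takes distinct such corollas to distinct operations; together with the first part this forces $\epsilon_O(T_1)=\epsilon_O(T_2)$ to imply $T_1\sim T_2$.

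For the first part I would work from the explicit recipe for $\epsilon_O$ in Theorem \ref{thm:C-tree-2}, namely: contract all internal edges, in any order. Move (1) contracts the subtree around a single vertex, which just performs some of these contractions; since the output of $\epsilon_O$ does not depend on the order of contraction (associativity of composition in $O$, as already used in the proof of Theorem \ref{thm:C-tree-2}), $\epsilon_O$ is unchanged. Move (2) inserts a $1$-ary vertex labelled by $1\in O_1$; contracting the new edge inserts a factor $1\circ_1(-)$ or $(-)\circ_j 1$ into the total composite, which is a no-op by the unit axiom of $O$, so again $\epsilon_O$ is unchanged (in the degenerate case where the edge is the unique edge of the trivial $1$-tree, move (2) produces the corolla with vertex labelled $1$, whose $\epsilon_O$-image is $1$, matching the convention $\epsilon_O(\text{trivial tree})=1$). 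Move (3) replaces the label $f\cdot\sigma$ of a single vertex $v$ by $f$ while permuting the children of $v$ by $\sigma$; here one must track how the planar order at $v$ feeds into the contraction recipe and how permuting the children of $v$ induces the corresponding block permutation of the leaves lying below it, and then invoke the equivariance axioms of $O$ to conclude that the total composite is unchanged. This verification is the part that requires the most care.

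For the second part, I claim every $O$-tree $T$ is $\sim$-equivalent to a corolla with leaves in standard order. If $T$ is the trivial tree ($n=1$), move (2) turns it into the corolla with one vertex labelled $1\in O_1$. Otherwise $T$ has at least one vertex; whenever it has two or more vertices there is an internal edge (since only one edge targets $0$), hence a vertex with a child vertex, and move (1) applied there strictly decreases the number of vertices. Iterating, we reach a corolla $N$ with one vertex and $n$ leaves in some planar order; writing its vertex label as $f\cdot\sigma$ with $\sigma$ the permutation that straightens the leaves, move (3) converts $N$ into a corolla $C$ with leaves in standard order. Since all three moves preserve $\epsilon_O$, we have $\epsilon_O(C)=\epsilon_O(T)$; but $\epsilon_O$ of a corolla with leaves in standard order is its vertex label (Theorem \ref{thm:C-tree-2}), so the vertex label of $C$ is $\epsilon_O(T)$. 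Hence $T$ is $\sim$-equivalent to the standard corolla with vertex labelled $\epsilon_O(T)$, which depends only on $\epsilon_O(T)$. Applying this to $T_1$ and $T_2$ and using $\epsilon_O(T_1)=\epsilon_O(T_2)$ gives $T_1\sim T_2$.

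The main obstacle I expect is the compatibility of move (3) with $\epsilon_O$: it forces one to unwind precisely how the $S_k$-actions on the spaces $O_k$, the planar orderings of children, and the edge-contraction definition of $\epsilon_O$ interact, and to see that permuting the children of a vertex exactly cancels the twist of its label by an element of $S_k$. A cleaner but more abstract alternative for the ``if'' direction would be to use Lemma \ref{monadic}: since $\epsilon_O\maps FU(O)\to O$ is a coequalizer of $\epsilon_{FU(O)}$ and $FU(\epsilon_O)$, the pairs of $O$-trees with equal image under $\epsilon_O$ form the equivalence relation generated by that parallel pair, and one then identifies this with the relation generated by moves (1)--(3) by inspecting ``trees of $O$-trees''. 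Besides this, a few routine points need to be settled: that the iteration of move (1) genuinely terminates at a corolla, and that the trivial tree and spurious $1$-ary identity vertices are handled by move (2).
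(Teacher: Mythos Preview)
Your proposal is correct and takes a genuinely different route from the paper. The paper proves both directions simultaneously by invoking the monadicity Lemma~\ref{monadic}: since $\epsilon_O$ is the coequalizer of $\epsilon_{FU(O)}$ and $FU(\epsilon_O)$, two $O$-trees have the same image under $\epsilon_O$ precisely when they are related by the equivalence relation generated by $\epsilon_{FUO}(H)\sim FU\epsilon_O(H)$ for $H\in FUFU(O)$; the paper then identifies this relation with the one generated by moves (1)--(3) by analyzing ``trees of $O$-trees'' and inducting on the number of vertices of $H$ carrying non-corolla labels. You instead argue directly: verify that each move preserves $\epsilon_O$, and then produce a normal form---every $O$-tree is $\sim$-equivalent to the standard corolla with vertex label $\epsilon_O(T)$---so that two trees with equal $\epsilon_O$-image share a normal form. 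Amusingly, you describe the paper's coequalizer argument in your final paragraph as the ``cleaner but more abstract alternative'', so you have essentially found both proofs.

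Your approach buys concreteness: it avoids the monadicity machinery entirely and exhibits an explicit canonical representative in each equivalence class, which is exactly what one wants later when describing reduced trees in the coproduct (Section~\ref{sec:coproduct_unary}). The paper's approach buys structure: it explains \emph{why} these particular moves are the right ones---they are what the two maps $\epsilon_{FU(O)}$ and $FU(\epsilon_O)$ do to a tree of trees---and it ports immediately to the coproduct setting. The technical core is the same in both: the compatibility of move~(3) with the symmetric group actions, which you correctly flag as the delicate point. Your termination argument for move~(1) is fine: with $|V|\ge 2$ there is an internal edge, and taking $v$ to be its target gives a vertex with at least one vertex child, so the subtree of $v$ together with its children and their source vertices has at least one internal edge to contract.
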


The following is a move of type 1:
\[
\begin{tikzpicture}[scale = 0.8]
\node at (-2,1){}; 
\node at (-1,1){};
\node at (0,1){};
\node at (0,0){};
\node at (2,1){};
\node at (3,1){};
\node at (-1,0){$\bullet$};
\node at (-0.3,0){$g_1$};
\node at (1,0){$\bullet$};
\node at (1.4,0){$g_2$};
\node at (2.5,0){$\bullet$};
\node at (3,0){$g_3$};
\node at (1,-1){$\bullet$};
\node at (1.5,-1){$f$};
\node at (7.5,-1){$\bullet$};
\node at (9.3,-1){$f \circ (g_1, g_2, g_3)$};
\node at (4.5,-1){$\sim$};
\path[-,font=\scriptsize]
(-2,1) edge (-1,0)
(-1,1) edge (-1,0)
(0,1) edge (-1,0)
(2,1) edge (2.5,0)
(1,1) edge (1,0)
(3,1) edge (2.5,0)
(-1,0) edge (1,-1)
(1,0) edge (1,-1)
(2.5,0) edge (1,-1)
(1,-1) edge (1,-2)
(5,1) edge (7.5,-1)
(6,1) edge (7.5,-1)
(7,1) edge (7.5,-1)
(8,1) edge (7.5,-1)
(9,1) edge (7.5,-1)
(10,1) edge (7.5,-1)
(7.5,-1) edge (7.5,-2);
\end{tikzpicture}
\]
This is a move of type 2:
\[
\begin{tikzpicture}
\node (1) at (0,2){};
\node (2) at (0,1) {$\bullet$}; 
\node at (0.2,1) {$1$};
\node (3) at (0,0){};
\node at (2,1) {$\sim$};
\path[-,font=\scriptsize]
(0,0) edge (0,2)
(4,0) edge (4,2);
\end{tikzpicture}
\]
and for $\sigma = \left(\begin{array}{ccc} 1 & 2 & 3 \\ 2 & 1 & 3\end{array} \right)$, this is a move of type 3:
\[
\begin{tikzpicture}[scale=1, every node/.style={scale=1}]
\node at (-1,1) {$\bullet$};
\node at (0,0) {$\bullet$};
\node at (0,1) {$\bullet$};
\node at (-0.2,1) {$s$};
\node at (-1.3,1) {$g$};
\node at (-0.5,0) {$f\cdot \sigma $};
\node at (2.5,0) {$\sim$};
\node at (4,1) {$\bullet$};
\node at (5,0) {$\bullet$};
\node at (4.7,0) {$f$};
\node at (4.7,1) {$g$};
\node at (3.8,1) {$s$};
\node at (5,1) {$\bullet$};
\path[-,font=\scriptsize]
(-1,1) edge (-2,2)
(-1,1) edge (0,2)
(-1,1) edge (0,0)
(0,0) edge (0,1)
(0,0) edge (2,2)
(0,0) edge (0,-1)
(5,0) edge (4,1)
(5,0) edge (5,1)
(5,1) edge (4,2)
(5,1) edge (6,2)
(5,0) edge (7,2)
(5,0) edge (5,-1);
\end{tikzpicture}
\]

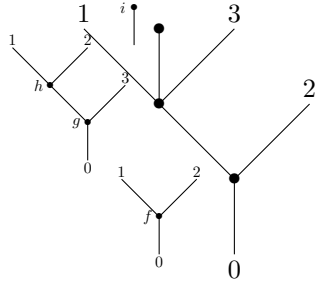
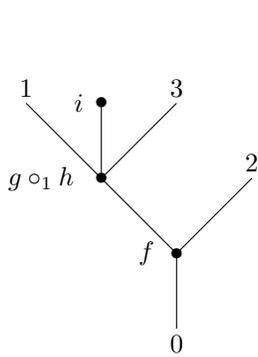
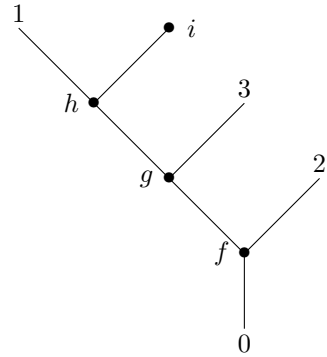
\begin{figure}[h!]
\centering
 \qquad
\subfigure[{An operation in $FUFU(O)_3$.}]{
\label{subf oper}
\centering
\begin{tikzpicture}
\node at (0,0) {$\bullet$};
\node at (-1,1) {$\bullet$};
\node at (-1,2) {$\bullet$};
\node at (-1.4,2.1){$
\begin{tikzpicture}[scale=0.5, every node/.style={scale=0.6}]
\node at (0,0) {$\bullet$};
\node at (-0.3,0) {$i$};
\path[-,font=\scriptsize]
(0,0) edge (0,-1);
\end{tikzpicture}$};
\node at (-2.2,1) {$\begin{tikzpicture}[scale=0.5, every node/.style={scale=0.6}]
\node at (-1,1) {$\bullet$};
\node at (0,0) {$\bullet$};
\node at (-0.3,0) {$g$};
\node at (-1.3,1) {$h$};
\node at (-2,2.2) {$1$};
\node at (0,2.2) {$2$};
\node at (1,1.2) {$3$};
\node at (0,-1.2) {$0$};
\path[-,font=\scriptsize]
(0,0) edge (-1,1)
(0,0) edge (0,-1)
(0,0) edge (1,1)
(-1,1) edge (0,2)
(-1,1) edge (-2,2);
\end{tikzpicture}$};
\node at (-1,-0.5) {$\begin{tikzpicture}[scale=0.5, every node/.style={scale=0.6}]
\node at (0,0) {$\bullet$};
\node at (-0.3,0) {$f$};
\node at (-1,1.2) {$1$};
\node at (1,1.2) {$2$};
\node at (0,-1.2) {$0$};
\path[-,font=\scriptsize]
(0,0) edge (-1,1)
(0,0) edge (0,-1)
(0,0) edge (1,1);
\end{tikzpicture}$};
\node at (-2,2.2) {$1$};
\node at (0,2.2) {$3$};
\node at (1,1.2) {$2$};
\node at (0,-1.2) {$0$};
\path[-,font=\scriptsize]
(0,0) edge (-1,1)
(0,0) edge (0,-1)
(0,0) edge (1,1)
(-1,1) edge (0,2)
(-1,1) edge (-2,2)
(-1,1) edge (-1,2);
\end{tikzpicture}
}\\
\subfigure[{
The morphism $FU\epsilon_O$ sends the operation in Figure~\ref{subf oper}  to the  $O$-tree obtained by applying $\epsilon_O$ to the vertex labels.}]
{
\centering
\begin{tikzpicture}
\node at (0,0) {$\bullet$};
\node at (-1,1) {$\bullet$};
\node at (-1,2) {$\bullet$};
\node at (-1.3,2) {$i$};
\node at (-1.8,1) {$g\circ_1h$};
\node at (-0.4,0) {$f$};
\node at (-2,2.2) {$1$};
\node at (0,2.2) {$3$};
\node at (1,1.2) {$2$};
\node at (0,-1.2) {$0$};
\path[-,font=\scriptsize]
(0,0) edge (-1,1)
(0,0) edge (0,-1)
(0,0) edge (1,1)
(-1,1) edge (0,2)
(-1,1) edge (-2,2)
(-1,1) edge (-1,2);
\end{tikzpicture}
}\hspace{2cm}
\subfigure[{
The morphism  $\epsilon_{FUO}$ sends the operation in Figure~\ref{subf oper} to the  tree obtained by  grafting the vertex labels.
}]{\centering
\begin{tikzpicture}
\node at (0,0) {$\bullet$};
\node at (-1,1) {$\bullet$};
\node at (-2,2) {$\bullet$};
\node at (-1,3) {$\bullet$};
\node at (-0.3,0) {$f$};
\node  at (-1.3,1) {$g$};
\node at (-2.3,2) {$h$};
\node at (-0.7,3) {$i$};
\node at (-3,3.2) {$1$};
\node at (0,2.2) {$3$};
\node at (1,1.2) {$2$};
\node at (0,-1.2) {$0$};
\path[-,font=\scriptsize]
(0,0) edge (-1,1)
(0,0) edge (0,-1)
(0,0) edge (1,1)
(-1,1) edge (0,2)
(-1,1) edge (-2,2)
(-2,2) edge (-1,3)
(-2,2) edge (-3,3);
\end{tikzpicture}
}
\caption{Example of an operation in $FUFU(O)_3$ and its image under $FU\epsilon_O$ and $\epsilon_{FUO}$.}\label{F:equivalence_ex}
\end{figure}

\begin{proof}

By Lemma \ref{monadic} we know that $(O,\epsilon_O)$ is the coequalizer of the following diagram:
\[        
\xymatrix{
FUFU(O)  \ar@<1ex>[rr]^{\epsilon_{FU(O)}}  \ar@<-1ex>[rr]_{FU(\epsilon_O)} &&
FU(O)
}
\]

Furthermore by Theorem \ref{thm:C-tree-2} we know that operations in $FUFU(O)$ are $FU(O)$-trees, so they are isomorphism classes of planar trees with vertices labelled by $O$-trees. 
We say that two $O$-trees $T,T'\in FU(O)_n$ are \define{equivalent} if and only if $T$ and $T'$ are related by the smallest equivalence relation with 
\[         \epsilon_{FUO}(H) \sim FU\epsilon_O(H)   \]
for all $H \in FUFU(O)_n$.  We give an example of an operation in $FUFU(O)$ and its image under the morphisms   $\epsilon_{FUO}$ and  $FU\epsilon_O$ in Figure \ref{F:equivalence_ex}. To prove the theorem, it suffices to show that two trees in $FU(O)_n$ are equivalent if and only if they differ by a finite sequences of moves  (\ref{item:composition})--(\ref{item:symmetry}).   For the `if' direction, it is enough to show that $T$ and $T'$ are equivalent if they differ by exactly one of these moves.

For moves of type (\ref{item:composition}) and (\ref{item:unit}) it is enough to consider trees with leaves labelled by $1<\dots <n$, since $\epsilon_{FUO}$ and  $FU\epsilon_O$ are equivariant. We call such trees \define{unpermuted}. Furthermore, if $T$ is an unpermuted tree and  $T'=T\cdot \sigma$ we say that $T'$ has \define{leaves permuted by $\sigma$}.

First suppose that $T'$ is obtained from $T$ by applying a move of type (\ref{item:composition}) in the forward direction. This means that $T$ contains a subtree $S$ with $k\leq n$ leaves and $T'$ is obtained from $T$ by contracting $S$. 
We denote  by $v$ the vertex of $T'$ corresponding to the contraction of $S$. 
Let $H$ be the $FU(O)$-tree whose underlying isomorphism class of planar $n$-trees is the same as that of $T'$ and 
whose vertex $v$ is labelled by $S$, while every other vertex $v_i$ is labelled by a corolla with unpermuted leaves and vertex labelled by the label of $v_i$ in $T'$.  In this case have $T = \epsilon_{FUO}(H)$ and $T' = FU\epsilon_O(H)$ as desired.

Next suppose that $T'$ is obtained from $T$ by applying move (\ref{item:unit}) in the forward direction. Let $v$ be the vertex of $T$ labelled by the identity that is replaced by an edge in $T'$. Then we let $H$ be the $FU(O)_n$-tree with underlying isomorphism class of planar $n$-trees that of $T$ and such that $v$ is labelled by the isomorphism class of the tree with no vertices. We again have $T = \epsilon_{FUO}(H)$ and $T' = FU\epsilon_O(H)$.

Finally, suppose that $T$ and $T'$ have arbitrary leaf labellings and that $T'$ is obtained from $T$ by a move of type (\ref{item:symmetry}). Let $H$ be the $FU(O)$-tree with underlying $n$-tree a corolla with unpermuted leaves and its only vertex labelled by $T$, and similarly let $H'$ be the  $FU(O)$-tree with underlying $n$-tree a  corolla with unpermuted leaves and its only vertex labelled by $T'$.  Then we clearly have that $T=\epsilon_{FU(O)}(H)$ and $T'=\epsilon_{FU(O)}(H')$. Furthemore, the equality $FU\epsilon_O(H)=FU\epsilon_O(H')$ shows that  $T$ and $T'$ are equivalent.

Conversely, we have to show that for any tree $H\in FUFU(O)_n$ we can go from $\epsilon_{FUO}(H)$ to $FU\epsilon_O(H)$ with a finite sequence of moves (\ref{item:composition})--(\ref{item:symmetry}).  We prove this by induction on the number $n$ of vertices of $H$ that are labelled by trees that are not unpermuted corollas. 

For $n=0$ we have $\epsilon_{FUO}(H)=FU\epsilon_O(H)$.  So, assume that $H$ has exactly $n+1$ vertices labelled by trees other than unpermuted corollas. Let  $v$ be one of these vertices and denote its label by $S$. First, suppose that $S$ is not the isomorphism class of the tree with no vertices. Let $\widetilde{H}$ be the $FU(O)$-tree obtained from $H$ by substituting the label of $v$ by a corolla with its vertex labelled by $\epsilon_O(S)$. Then we have $FU\epsilon_O(H)=FU\epsilon_O(\widetilde{H})$.  Let $T=\epsilon_{FU(O)}(H)$ and  $\widetilde{T}=\epsilon_{FU(O)}(\widetilde{H})$. The label of $v$ is sent in $T$ to a subtree with underlying tree that of $S$ and same labels on the vertices, while it is sent in $\widetilde{T}$ to a vertex labelled by $\epsilon_O(S)$. Thus we can go from $T$ to $\widetilde{T}$ with a move of type (\ref{item:composition}) or type (\ref{item:symmetry}) or both.  Next, assume that $S$ is the isomorphism class of the tree with no vertices. Then we let $\widetilde{H}$ be the $FU(O)_n$-tree obtained from $H$ by deleting $v$. In this case we have $\epsilon_{FU(O)}(H)=\epsilon_{FU(O)}(\widetilde{H})$, and we can go from $FU\epsilon_O(H)$ to $FU\epsilon_O(\widetilde{H})$ with a move of type (\ref{item:unit}) in the forward direction. 

The claim now follows by the induction hypothesis. This completes the proof of Theorem \ref{thm:counit}. \end{proof}

\section{Coproducts of operads}
\label{sec:coproduct}

We can use Theorem \ref{thm:C-tree-2} and Theorem \ref{thm:counit} to describe the coproduct of operads.  Given operads $O$ and $O'$, their coproduct is an operad $O+O'$.  Its algebras are easy to describe: by the universal property of the coproduct, an algebra of $O+O'$ is a topological space that is an algebra both of $O$ and $O'$.  Its collection of operations, on the other hand, is a bit complicated.  

Leinster \cite{Lei} has described the coproduct for non-symmetric operads in the category of sets.  To prove our result, we adapt his result to the operads we are considering: symmetric topological operads.  

To build $O+O'$, first note that there are epimorphisms
\[         \epsilon_O \maps FU(O) \to O , \qquad \epsilon_{O'} \maps FU(O') \to O' . \]
Taking their coproduct, we obtain an epimorphism
\[         \epsilon_O + \epsilon_{O'} \maps FU(O) + FU(O') \to O + O' . \]
On the other hand, left adjoints preserve coproducts, so we have a canonical isomorphism
\[   FU(O) + FU(O') \cong F(U(O) + U(O')) \]
This gives us, with a slight abuse of notation, an epimorphism
\[         \epsilon_O + \epsilon_{O'} \maps F(U(O) + U(O')) \to O + O' . \]
By Theorem \ref{thm:C-tree-2}, operations in $F(U(O) + U(O'))$ can
be seen as $U(O) + U(O')$-trees.   The epimorphism above thus lets us
describe operations of $O + O'$ as equivalence classes of $U(O) + U(O')$-trees.

What is the equivalence relation?  This is answered by the following result, which is based on Theorem \ref{thm:counit}.
To state the result we will need the following definition:
\begin{defn}
Let $O$ and $O'$ be operads. A \define{$O$-subtree} of a $U(O)+U(O')$ tree is a subtree having vertices labelled only by operations of $O$.
\end{defn}

\begin{theorem}
\label{thm:coproduct}
Let $O$ and $O'$ be operads.  Operations in $F(U(O) + U(O'))$ may be identified with $U(O)+U(O')$-trees.  Two $U(O)+U(O')$-trees map to the same operation of $O+O'$ via the operad morphism
\[         \epsilon_O + \epsilon_{O'} \maps F(U(O) + U(O')) \to O + O'  \]
if and only if we can go from one to the other by a finite sequence of the following moves:

\begin{enumerate}
\item \label{item:composition_2}
For any $U(O)+U(O')$-tree, we can replace any $O$-subtree by its contraction.

\item \label{item:unit_2} For any $U(O)+U(O')$-tree, we can replace any edge by a corolla with its vertex labelled by the identity $1 \in O_1$.

\item \label{item:symmetry_2} For any $U(O)+U(O')$-tree, we can replace any $O$-subtree given by exactly one vertex $v$ labelled by $f\cdot \sigma$, where $\sigma \in S_k$ and $f \in O_k$, by the subtree obtained from $S$ by permuting $S$ by $\sigma$ and substituting the label of $v$ by $f$.

\item \label{item:composition_3} The same as (\ref{item:composition_2}) with $O'$ instead of $O$.

\item \label{item:unit_3} The same as (\ref{item:unit_2}) with $O'$ instead of $O$.

\item \label{item:symmetry_3} The same as (\ref{item:symmetry_2}) with $O'$ instead of $O$.
\end{enumerate}
\end{theorem}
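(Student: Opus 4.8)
The plan is to mirror the proof of Theorem~\ref{thm:counit}, replacing the bar resolution of a single operad by the coproduct of the bar resolutions of $O$ and $O'$; this is the symmetric, topological analogue of Leinster's computation \cite{Lei}.

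First I would record a coequalizer presentation. By Lemma~\ref{monadic}, each of $O$ and $O'$ is the coequalizer in $\Op$ of its bar diagram, $O = \mathrm{coeq}\bigl(\epsilon_{FU(O)},\, FU(\epsilon_O)\maps FUFU(O)\rightrightarrows FU(O)\bigr)$ and similarly for $O'$. Since coproducts, being colimits, commute with coequalizers, $O+O'$ is the coequalizer of the coproduct diagram $FUFU(O)+FUFU(O') \rightrightarrows FU(O)+FU(O')$, with coequalizing map exactly $\epsilon_O+\epsilon_{O'}$. Because $F$ is a left adjoint it preserves coproducts, so $FU(O)+FU(O')\cong F(U(O)+U(O'))$, whose operations are $U(O)+U(O')$-trees by Theorem~\ref{thm:C-tree-2}, and $FUFU(O)+FUFU(O') \cong F(UFU(O)+UFU(O'))$, whose operations are planar trees each of whose $k$-ary vertices is labelled by an $O$-tree or an $O'$-tree with $k$ leaves; call these \emph{doubly-labelled trees}. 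Tracing the two parallel maps, and using that the counit $\epsilon_{A+B}$ restricts along the coprojections to $\epsilon_A$ and $\epsilon_B$, one sees that $\epsilon_{FU(O)}+\epsilon_{FU(O')}$ substitutes each vertex-label-tree into its vertex, producing a $U(O)+U(O')$-tree, while $FU(\epsilon_O)+FU(\epsilon_{O'})$ replaces each vertex-label-tree $S$ by the single operation $\epsilon_O(S)$ (resp.\ $\epsilon_{O'}(S)$). Thus two $U(O)+U(O')$-trees represent the same operation of $O+O'$ if and only if they lie in the smallest equivalence relation identifying the two images of every doubly-labelled tree $H$ --- the reduction to this set-theoretic statement being justified exactly as in the proof of Theorem~\ref{thm:counit}.

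It then remains to show that this equivalence relation is generated by moves (\ref{item:composition_2})--(\ref{item:symmetry_3}). For the ``if'' direction I would exhibit, for each single application of a move, a doubly-labelled tree $H$ (or, for the symmetry moves, a pair $H,H'$) witnessing that the two trees have the same image: for a move of type (\ref{item:composition_2}), take $H$ to be the contracted tree with the collapsed vertex relabelled by the whole $O$-subtree and the remaining vertices labelled by corollas carrying their current labels, so that the substitution map returns the uncontracted tree and the collapsing map returns the contracted one; for type (\ref{item:unit_2}), label the inserted vertex by the $O$-tree with no vertices; for type (\ref{item:symmetry_2}), use two doubly-labelled trees carrying the permutation in the outer layer, exactly as in the corresponding case of Theorem~\ref{thm:counit}; and symmetrically for (\ref{item:composition_3})--(\ref{item:symmetry_3}) using $O'$-subtrees. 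For the ``only if'' direction I would show, for an arbitrary doubly-labelled tree $H$, that its two images are related by moves (\ref{item:composition_2})--(\ref{item:symmetry_3}), by induction on the number of vertices of $H$ whose label is not an unpermuted corolla: collapsing one such label via $\epsilon_O$ (resp.\ $\epsilon_{O'}$) changes the substitution-image by a move of type (\ref{item:composition_2}) and/or (\ref{item:symmetry_2}) (or, if that label is the tree with no vertices, by a move of type (\ref{item:unit_2})), leaves the collapsing-image unchanged, and reduces the count; the base case is an equality. This follows the same structure as the argument for Theorem~\ref{thm:counit}, carried out one summand at a time.

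The main obstacle is the bookkeeping in the second paragraph: one must check carefully that the two legs of the coproduct bar diagram act on doubly-labelled trees as ``substitute the label-trees'' and ``collapse each label-tree,'' which rests on $F$ preserving coproducts together with the explicit description of the counit $\epsilon$ from the proof of Theorem~\ref{thm:C-tree-2} and its compatibility with the coprojections into $O+O'$. The only other delicate point is the symmetry moves (\ref{item:symmetry_2}) and (\ref{item:symmetry_3}), where the $S_n$-action and the planar structure interact; these are dispatched exactly as in Theorem~\ref{thm:counit}.
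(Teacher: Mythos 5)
Your proposal is correct, but it follows a genuinely different route from the paper's. The paper does not redo a bar-resolution argument for the coproduct: after reducing to unpermuted trees by equivariance, it writes each $U(O)+U(O')$-tree as an iterated partial composite of pure $O$-trees and $O'$-trees, passes to minimal such decompositions, argues that two trees have the same image under $\epsilon_O+\epsilon_{O'}$ exactly when the corresponding minimal pieces have the same images under $\epsilon_O$ (resp.\ $\epsilon_{O'}$), and then applies Theorem~\ref{thm:counit} piece by piece. You instead present $O+O'$ as the coequalizer of the coproduct of the two canonical presentations, identify the middle term $F(UFU(O)+UFU(O'))$ with doubly-labelled trees, compute the two legs as ``graft the vertex labels'' and ``apply $\epsilon_O$ or $\epsilon_{O'}$ to the vertex labels'' (this bookkeeping, which you rightly flag, does check out on generators via the triangle identities and naturality of the unit), and then rerun the two-directional argument of Theorem~\ref{thm:counit} on this larger presentation. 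What you gain: you avoid the paper's most delicate step---the matching of minimal decompositions, which moreover invokes that $i_O,i_{O'}$ are monic, a fact the paper only derives afterwards (Corollary~\ref{cor:suboperad}) from this very theorem. What you must still supply, since ``exactly as in Theorem~\ref{thm:counit}'' is not quite literal: (i) the reduction to the set-level quotient statement---the canonical presentation used there is $U$-split, whereas your combined pair is only reflexive, so you need either that $U$ preserves reflexive coequalizers of topological operads or a direct check that the generated relation is an operadic congruence; and (ii) for the symmetry moves, the witness used for Theorem~\ref{thm:counit} (a single corolla labelled by the whole tree) is unavailable when the tree mixes $O$- and $O'$-labels, so the pair $H,H'$ must be localized at the vertex in question, e.g.\ labelling it by the unpermuted corolla on $f\cdot\sigma$ and by the corolla on $f$ with leaves permuted by $\sigma$, which have the same image under $\epsilon_O$. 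With these two points spelled out, your argument is complete and, arguably, cleaner than the paper's.
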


\begin{proof}
We know from Theorem \ref{thm:counit} that operations in $O+O'$  are equivalence classes of $U(O)+U(O')$-trees, while operations in $F(U(O)+U(O'))$ are $U(O)+U(O')$-trees by Theorem \ref{thm:C-tree-2}.

Since $\epsilon_O+\epsilon_{O'}$ is equivariant, we may prove the statement only for unpermuted $U(O)+U(O')$-trees.
Given a $O$-tree $T$, we have $\epsilon_O+\epsilon_{O'}(T)=i_O(\epsilon_O(T))$ where $i_O\colon O\to O+O'$ is part of the data of the coproduct. Similarly, we have $\epsilon_O+\epsilon_{O'}(T')=i_{O'}(\epsilon_{O'}(T'))$ for any $O'$-tree $T'$.
We can write any $U(O)+U(O')$-tree as a composition of  $O$- and $O'$-trees, and call such a collection of trees a \define{decomposition} of $T$.   
We write 
\[
 T=S_1\circ_{j_1}S_2\circ_{j_2}\dots \circ_{k-1}S_k
 \]
for such a decomposition, where $S_i$ is either a $O$-tree or a $O'$-tree.

We define a partial order on the set of decompositions of a tree: a decomposition $D$ is smaller then a decomposition $D'$ if by substituting none or a finite number  of trees in $D$ by their decomposition we obtain $D'$.  It is easy to see that every tree has maximum and minimum decompositions which are unique up to a permutation of the terms.  We then have 
\[
 \epsilon_O+\epsilon_{O'}(T)=\epsilon_{O+O'}(S_1)\circ_{j_1}\epsilon_{O+O'}(S_2)\circ_{j_2}\dots \circ_{k-1}\epsilon_{O+O'}(S_k)
 \] 
 \noindent 
 where each term $\epsilon_{O+O'}(S_i)$ is either $i_O\epsilon_O(S_i)$ or $i_{O'}\epsilon_{O'}(S_i)$ depending on whether $S_i$ is an $O$-tree or $O'$-tree.
  This does not depend on the decomposition of the tree because $\epsilon_O$ and $\epsilon_{O'}$ are operad morphisms.
 The morphism $\epsilon_O+\epsilon_{O'}$ sends two trees $T$ and $T'$ to the same equivalence class in $O+O'$ if and only if 
\[
\epsilon_{O+O'}(S_1)\circ_{j_1}\epsilon_{O+O'}(S_2)\circ_{j_2}\dots \circ_{j_{k-1}}\epsilon_{O+O'}(S_k)
=
\epsilon_{O+O'}(S'_1)\circ_{j'_1}\epsilon_{O+O'}(S'_2)\circ_{j'_2}\dots \circ_{j_{k'-1}}\epsilon_{O+O'}(S'_{k'})
\]
If we take the minimum decompositions of $T$ and $T'$, we necessarily have $k=k'$, and  $\epsilon_{O+O'}(S_p)=\epsilon_{O+O'}(S'_p)$ for all $1\leq p\leq k$ (if necessary take a permutation of the decomposition of one of the trees).  Since $i_O$ and $i_{O'}$ are monomorphisms, this is equivalent to either $\epsilon_O(S_p)=\epsilon_O(S'_p)$ or $\epsilon_{O'}(S_p)=\epsilon_{O'}(S_p')$. By Theorem \ref{thm:counit} we know that this is  the case if and only if we can go from $S_p$ to $S'_{p}$ with a finite sequence of moves \ref{item:composition_2}, \ref{item:unit_2}, \ref{item:symmetry_2}. We thus obtain the desired result.
\end{proof}

Theorem \ref{thm:coproduct} tells us what the operations of $(O+O')_n$ are, but what about its topology?   We know that there is an epimorphism  $\epsilon_O+\epsilon_{O'}$ from $F(U(O)+U(O'))$ to $O+O'$.  The topology of the spaces $F(U(O)+U(O'))_n$ underlying the free operad $F(U(O)+U(O'))$ is the finest topology making the maps $i_n\maps O_n\to F(U(O)+U(O'))_n$ and $i'_n\maps O'_n\to F(U(O)+U(O'))_n$ continuous.  It is also easy to show that the topology of $(O+O')_n$ is the finest topology  making the maps $(\epsilon_O+\epsilon_{O'})_n\maps F(U(O)+U(O'))_n\to O+O'_n$ continuous.  We will need this fact in Appendix \ref{A:topology of phyl}, where we study the topology of the space of phylogenetic trees. 

We can also use Theorem \ref{thm:coproduct} to show that for any operads $O$ and $O'$, any suboperad of $O$ is a suboperad of $O+O'$:

\begin{defn}
Given a pair of operads $P$ and $Q$, we say that $P$ is a \define{suboperad} of $Q$ if it is equipped with a monomorphism $\iota \maps P \to O$.
\end{defn}
 
\begin{corollary}
\label{cor:suboperad}
Let $O$ and $O'$ be operads. Then the morphisms $\iota_O \maps O\to O+O'$ and 
$\iota_{O'} \maps O \to O + O'$ that are part of the data of the coproduct are monomorphisms. 
As a consequence, any suboperad of $O$ becomes a suboperad of $O+O'$. 
\end{corollary}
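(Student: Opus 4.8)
The plan is to deduce everything from Theorem~\ref{thm:coproduct}. First I would reduce the statement to a purely set-theoretic claim: an operad morphism $\phi$ is a monomorphism in $\Op$ as soon as each component $\phi_n$ is injective, since if $a,b\maps P\to O$ satisfy $\iota_O\circ a=\iota_O\circ b$ then $(\iota_O)_n\circ a_n=(\iota_O)_n\circ b_n$ for every $n$, and injectivity of $(\iota_O)_n$ forces $a_n=b_n$. Granting that $(\iota_O)_n$ is injective for all $n$, the second assertion is then immediate: if $\iota\maps P\to O$ is a monomorphism exhibiting $P$ as a suboperad of $O$, then $\iota_O\circ\iota$ is a composite of monomorphisms, hence a monomorphism. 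So the whole problem is to show that $(\iota_O)_n\maps O_n\to(O+O')_n$ is injective.

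By Theorem~\ref{thm:coproduct}, an element of $(O+O')_n$ is an equivalence class of $U(O)+U(O')$-trees under the moves (\ref{item:composition_2})--(\ref{item:symmetry_3}), and restricting $\epsilon_O+\epsilon_{O'}$ to $O$-trees equals $\iota_O\circ\epsilon_O$; hence $(\iota_O)_n(f)$ is the equivalence class of the corolla $C_f$ carrying the single vertex labelled by $f\in O_n$ with leaves ordered $1<\cdots<n$. Thus I must prove: if $C_f$ and $C_g$ are joined by a finite sequence of moves, then $f=g$. For this I would introduce a \emph{normal form} for $U(O)+U(O')$-trees. Orient the moves by regarding the contractions (\ref{item:composition_2}) and (\ref{item:composition_3}), the symmetry moves (\ref{item:symmetry_2}) and (\ref{item:symmetry_3}), and the \emph{reverse} directions of (\ref{item:unit_2}) and (\ref{item:unit_3}) (which replace an identity-labelled corolla by a bare edge) as elementary reductions. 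Reductions other than the symmetry ones strictly decrease the number of vertices, and the symmetry moves, which do not change the vertex count, can be absorbed by working with vertex labels taken up to the relevant $S_k$-action; so iterated reduction terminates, and a normal form is either the no-vertex $1$-tree or a tree in which no internal edge joins two $O$-vertices or two $O'$-vertices and no vertex is labelled by an identity.

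The crux is that normal forms are unique within an equivalence class --- equivalently, that this rewriting system is confluent. By Newman's lemma it suffices to check local confluence: reductions supported on disjoint parts of a tree commute, and the remaining overlaps (an identity-removal adjacent to a contraction, two contractions sharing a vertex, a symmetry move meeting a contraction) can each be resolved to a common tree using the associativity, unit and equivariance axioms of an operad, exactly as in Leinster's analysis of the non-symmetric set-theoretic case~\cite{Lei}. Granting confluence, the normal form of $C_f$ is $C_f$ itself when $f$ is not the identity of $O_1$, and the no-vertex $1$-tree when it is (in which case $n=1$ and $f$ is forced); in either case the normal form determines the vertex label together with the leaf-ordering up to the simultaneous $S_n$-action, so $C_f\sim C_g$ with both leaf-orderings standard forces $f=g$. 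The main obstacle is precisely this confluence check; everything else is bookkeeping. If one prefers to avoid rewriting theory, the same conclusion can be reached by a direct commutation argument: in any sequence of moves from $C_f$ to $C_g$ one may push all identity-introducing moves to the front and all identity-removing moves to the back, reducing to a situation in which the only non-identity vertex present at any stage is the one carrying $f$, which is then never altered except by the equivariance move --- again forcing $g=f$.
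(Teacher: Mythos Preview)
Your argument is correct and follows the same skeleton as the paper's: reduce to injectivity of each $(\iota_O)_n$ and then invoke the description of $(O+O')_n$ from Theorem~\ref{thm:coproduct}. The paper's own proof is much terser on both points. For the reduction it cites Fresse for the fact that limits of operads are computed pointwise (hence monomorphisms are pointwise monomorphisms in $\Top$, i.e.\ continuous injections), whereas you give a direct left-cancellation argument; and for the injectivity itself the paper simply declares it ``easy to see from the explicit description of $O+O'$ given in Theorem~\ref{thm:coproduct}, together with the description of the topology on $O+O'$,'' with no further comment. Your normal-form/rewriting argument --- orienting the moves of Theorem~\ref{thm:coproduct}, invoking Newman's lemma for confluence, and reading off that the corolla $C_f$ is already its own normal form --- is exactly what is needed to substantiate that one-line assertion, and the local-confluence check you flag as the main obstacle is indeed routine against the associativity, unit and equivariance axioms of an operad. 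In short, your proof is the paper's proof with the ``easy to see'' actually filled in.
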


\begin{proof}
We note that since limits of operads can be computed pointwise \cite[Prop.\ I.1.2.4]{Fre}, a morphism $\iota \maps P \to Q$ is monic if and only if all the maps $\iota_n \maps P_n \to Q_n$ 
is monic in $\Top$. Furthermore, the monomorphisms in $\Top$ are the continuous injections.  
Thus, we only need to prove that each map $\iota_{O,n} \maps O_n \to (O+O')_n$ is a continuous injection.  This is easy to see from the explicit description of $O + O'$ given in Theorem \ref{thm:coproduct}, together with the description of the topology on $O+O'$.
\end{proof}

\section{The coproduct of an operad and a unary operad} 
\label{sec:coproduct_unary}

We next give an explicit description of the operations of the coproduct $O+O'$ when $O'$ is an operad having only unary operations.

\begin{defn} Let $C$ be a collection and $M$ a set.  A \define{$(C,M)$-labelled planar
$n$-tree} is a $C$-$n$-tree together with a map $\ell \colon E\to M$ assigning a label in $M$ to each edge.  An \define{isomorphism} of $(C,M)$-labelled planar $n$-trees is an isomorphism of the underlying $C$-$n$-trees that preserves the labelling of edges.
\end{defn}

\begin{defn}
A \define{$(C,M)$-$n$-tree} is an isomorphism class of $(C,M)$-labelled planar $n$-trees. A \define{$(C,M)$-tree} is a $(C,M)$-$n$-tree for some $n$.
\end{defn}
\noindent
 We further make the following definition, closely linked to Definition \ref{defn:OTree}:
\begin{defn}
For any operad $O$ and any set $M$ we define an \define{$(O,M)$-$n$-tree} to be a $(U(O),M)$-$n$-tree. An \define{$(O,M)$-tree} is a $(U(O),M)$-tree.
\end{defn}

\noindent
The notion of subtree introduced in Definition \ref{defn:subtree} can be extended to $(C,M)$-trees, where the subtree inherits its labels from the original tree. More precisely, a \define{subtree} of a $(C,M)$-tree $(T,l)$ is a $(C,M)$-tree $(S,l|_{E_S})$ where $S$ is a subtree of $T$.
We can also extend the definition of permutation on subtrees to $(C,M)$-trees. Suppose that  $S$ consists of a single vertex and that $\inc_S$ consists of $k$ elements. The \define{permutation of $(S,l|_{E_S})$ by $\sigma\in S_k$} is the $(C,M)$-tree  $(S\cdot \sigma,l|_{E_S})$ where $S\cdot \sigma$ is the permutation of $S$ by $\sigma$ (see Definition \ref{defn:perm subtree}).\\

We are now ready to state the result that we need to prove Theorem \ref{thm:bijection}. For this we first give the following definition of equivalence relation:

\begin{defn}\label{defn:eq relation}
 Let $O$ and $O'$ be operads, and suppose that $O'$ only has unary operations. We say that two $(O,O'_1)$-trees are  \define{equivalent} if we can reach one from the other by a finite sequence of moves of this type: for any such tree $(T,\ell)$, we can replace any $(O,O'_1)$-subtree $(S,\ell|_{E_S})$ where $S$ has exactly one vertex $v$ labelled by $f \cdot \sigma$, with $\sigma \in S_k$ and $f \in O_k$, by the subtree obtained from $(S,\ell|_{E_S})$ by permuting $(S,\ell|_{E_S})$ by $\sigma$ and substituting the label of $v$ by $f$.
\end{defn}

\begin{theorem}\label{thm:generalized_phyl_trees}
Suppose $O$ and $O'$ are operads and $O'$ has only unary operations.  The operations of the coproduct $O+O'$ are in bijection with equivalence classes of $(O,O'_1)$-trees such that no unary vertex is labelled by $1_O$ and no internal edge is labelled by $1_{O'}$, where the equivalence relation is as in Definition~\ref{defn:eq relation}.
\end{theorem}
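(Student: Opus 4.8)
The plan is to derive this from Theorem~\ref{thm:coproduct}, which already presents the operations of $O+O'$ as $U(O)+U(O')$-trees modulo the six moves listed there. Since $O'$ has only unary operations, the group $S_1$ is trivial, so the $O'$-symmetry move \ref{item:symmetry_3} is vacuous and only five moves remain. What is left is to repackage a $U(O)+U(O')$-tree as an $(O,O'_1)$-tree, absorbing the two contraction moves and the two identity-insertion moves into a choice of normal form, and matching the surviving $O$-symmetry move \ref{item:symmetry_2} with the equivalence relation of Definition~\ref{defn:eq relation}.

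First I would define a map $\Phi_0$ from $(O,O'_1)$-trees to $U(O)+U(O')$-trees: subdivide each edge $e$ by a new (automatically unary) vertex labelled by $\ell(e)\in O'_1$. This is equivariant and isomorphism-invariant, and a move of Definition~\ref{defn:eq relation} applied to $(T,\ell)$ is literally an $O$-symmetry move \ref{item:symmetry_2} applied to $\Phi_0(T,\ell)$; hence $\Phi_0$ descends to a map $\Phi$ from equivalence classes of $(O,O'_1)$-trees satisfying the two stated conditions to operations of $O+O'$. For the would-be inverse: given a $U(O)+U(O')$-tree $N$, first contract every maximal $O$-subtree and every maximal $O'$-subtree of $N$ (this is a finite iteration of the contraction moves of Theorem~\ref{thm:coproduct}, and is unambiguous by associativity of operadic composition, Theorem~\ref{thm:counit}). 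In the result no two adjacent vertices have the same type, so types alternate along every root-to-leaf path; reading the $O$-vertices as vertices and the unique intervening $O'$-label as the label of each edge — using $1_{O'}$ on an external edge with no intervening $O'$-vertex — yields an $(O,O'_1)$-tree $\psi_0(N)$. Finally, define $\psi_1$ on $(O,O'_1)$-trees by repeatedly deleting a unary vertex labelled $1_O$ (merging its two edges, multiplying their $O'$-labels) or an internal edge labelled $1_{O'}$ (merging its two endpoint vertices via operadic composition in $O$); each step removes a vertex, so this terminates at a tree obeying both conditions. Set $\psi=\psi_1\circ\psi_0$.

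The verifications I would then carry out are: (i) $\psi_1$ is confluent modulo the relation of Definition~\ref{defn:eq relation}, so $\psi$ is a well-defined map into the target; (ii) $\psi$ is unchanged by all six moves of Theorem~\ref{thm:coproduct} — contraction moves are absorbed by the contractions already performed in $\psi_0$; an identity-insertion move either lands inside a monochromatic block that is then contracted (where composing with an identity changes nothing), or creates an identity vertex or an internal $1_{O'}$-edge that $\psi_1$ removes, or an external $1_{O'}$-vertex whose presence does not alter the read-off of $\psi_0$; and the $O$-symmetry move is accounted for because we work modulo Definition~\ref{defn:eq relation} — so $\psi$ descends to $\bar\psi$ on operations of $O+O'$; (iii) $\bar\psi\circ\Phi=\mathrm{id}$, immediate since $\psi_0$ undoes the subdivision $\Phi_0$ and $\psi_1$ fixes trees already obeying the two conditions; (iv) $\Phi\circ\bar\psi=\mathrm{id}$, because $N$ is joined to $\Phi_0(\psi_0(N))$ by contraction moves together with identity-insertions that reinstate an $O'$-vertex on the external edges that lost one, and each reduction step of $\psi_1$ lifts, after applying $\Phi_0$, to an identity-insertion move (plus a contraction); hence $N$ and $\Phi_0(\psi(N))$ represent the same operation. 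Together (i)--(iv) show that $\Phi$ is a bijection, which is the assertion.

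The main obstacle is step (i): checking that the two reduction rules defining $\psi_1$ commute in the finitely many overlapping configurations — two unary $1_O$-vertices, two internal $1_{O'}$-edges, and a unary $1_O$-vertex incident to an internal $1_{O'}$-edge — as well as with the $O$-symmetry move. Each critical pair resolves using that composing an operation with an identity (in $O$ or in $O'$) returns the original operation, and that operadic composition is associative and equivariant. Once confluence is in hand, the remaining work is routine bookkeeping with the moves of Theorems~\ref{thm:counit} and~\ref{thm:coproduct}.
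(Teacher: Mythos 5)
Your proposal is correct and follows essentially the same route as the paper: both derive the result from Theorem \ref{thm:coproduct}, pass between $U(O)+U(O')$-trees and edge-labelled trees by converting unary $O'$-vertices into edge labels (your subdivision $\Phi_0$ is exactly the inverse of the paper's redrawing convention), and identify normal forms via a terminating, locally confluent reduction---the paper invokes Newman's Lemma where you check critical pairs---with the residual $O$-symmetry move matching Definition~\ref{defn:eq relation}. The only cosmetic difference is that the paper reduces within the class of redrawn $U(O)+U(O')$-trees and treats the root edge by a final relabelling rule, whereas you first pass to $(O,O'_1)$-trees and reduce there; no gap results.
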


For example, if $\sigma = \left(\begin{array}{ccc} 1 & 2 & 3 \\ 2 & 1 & 3\end{array} \right)$:
\[
\begin{tikzpicture}[scale=1, every node/.style={scale=1}]
\node at (-1,1) {$\bullet$};
\node at (0,0) {$\bullet$};
\node at (0,1) {$\bullet$};
\node at (-0.2,1) {$s$};
\node at (0.2,0.6) {$\ell_2$};
\node at (-1.3,1) {$g$};
\node at (-1.4,1.7) {$\ell_5$};
\node at (-0.6,1.7) {$\ell_6$};
\node at (-0.5,0.7) {$\ell_1$};
\node at (-0.5,0) {$f\cdot \sigma $};
\node at (1.4,1.7) {$\ell_3$};
\node at (0.2,-0.5) {$\ell_4$};
\node at (2.5,0) {$\sim$};
\node at (4,1) {$\bullet$};
\node at (5,0) {$\bullet$};
\node at (4.5,0.7) {$\ell_2$};
\node at (5.2,0.6) {$\ell_1$};
\node at (6.4,1.7) {$\ell_3$};
\node at (5.2,-0.5) {$\ell_4$};
\node at (4.6,1.7) {$\ell_5$};
\node at (5.4,1.7) {$\ell_6$};
\node at (4.7,0) {$f$};
\node at (4.7,1) {$g$};
\node at (3.8,1) {$s$};
\node at (5,1) {$\bullet$};
\path[-,font=\scriptsize]
(-1,1) edge (-2,2)
(-1,1) edge (0,2)
(-1,1) edge (0,0)
(0,0) edge (0,1)
(0,0) edge (2,2)
(0,0) edge (0,-1)
(5,0) edge (4,1)
(5,0) edge (5,1)
(5,1) edge (4,2)
(5,1) edge (6,2)
(5,0) edge (7,2)
(5,0) edge (5,-1);
\end{tikzpicture}
\]

\begin{proof}
We use Theorem \ref{thm:coproduct}, which describes any operation of $O+O'$ as an equivalence class of $U(O) + U(O')$-trees.  There is an operad morphism
\[         \epsilon_O + \epsilon_{O'} \maps F(U(O) + U(O')) \to O+O' \]
sending $U(O) + U(O')$-trees to operations of $O+O'$.  This map is onto, and Theorem \ref{thm:coproduct} says when two $U(O) + U(O')$-trees are sent to the same operation of $O+O'$.  We can use this to describe operations of $O+O'$.

To begin, recall from Definition \ref{defn:CTree} that a $U(O) + U(O')$-tree is an isomorphism class of planar $n$-trees where each vertex with $k$ children is labelled by:
\begin{itemize}
\item an operation in $O_k$  if $k > 1$ or $k=0$;
\item  either an  operation in $O'_1$ or in $ O_1$ if $k = 1$.
\end{itemize}
We can draw these trees in a simpler way as follows.  If a vertex $v$ has one child and is labelled by $\ell \in O'_1$, we redraw it by using this operation $\ell$ to label the unique edge having $v$ as its source.   For example, consider the following $U(O) + U(O')$-tree, where we use $f_1,\dots , f_5$ to denote operations from $O$ and $\ell_1,\dots , \ell_4$ to denote operations in $O'_1$:
\[
\begin{tikzpicture}[scale=1.1]
\node at (-2,1){$\bullet$}; 
\node at (-2.25,1){$f_4$};
\node at (-1,1){$\bullet$};
\node at (-0.7,1){$1_{O'}$};
\node at (0,1){$\bullet$};
\node at (0.3,1){$f_1$};
\node at (1,1){$\bullet$};
\node at (1.3,1){$f_2$};
\node at (3,1){$\bullet$};
\node at (3.3,1){$\ell_1$};
\node at (-1,0){$\bullet$};
\node at (-1.3,0){$f_3$};
\node at (1,0){$\bullet$};
\node at (1.3,0){$1_{O'}$};
\node at (2.5,0){$\bullet$};
\node at (2.9,0){$\ell_2$};
\node at (1,-1){$\bullet$};
\node at (1.4,-1){$f_5$};
\node at (-2.5,2.3){$4$};
\node at (-2,2.3){$1$}; 
\node at (-1.5,2.3){$3$}; 
\node at (-1,2.3){$2$}; 
\node at (0,2.3){$8$}; 
\node at (0.8,2.3){$6$}; 
\node at (1.2,2.3){$5$}; 
\node at (3,2.3){$7$};
\node at (1,-2){$\bullet$};
\node at (1.3,-2){$\ell_3$};
\node at (1,-3.3){$0$};
\node at (0,-0.5){$\bullet$};
\node at (0.3,-0.3){$\ell_4$};

\path[-,font=\scriptsize]
(-2.5,2) edge (-2,1)
(-2,2) edge (-2,1)
(-1.5,2) edge (-2,1)
(-1,2) edge (-1,1)
(0,2) edge (0,1)
(0.8,2) edge (1,1)
(1.2,2) edge (1,1)
(3,2) edge (3,1)
(-2,1) edge (-1,0)
(-1,1) edge (-1,0)
(0,1) edge (-1,0)
(1,1) edge (1,0)
(3,1) edge (2.5,0)
(-1,0) edge (1,-1)
(1,0) edge (1,-1)
(2.5,0) edge (1,-1)
(1,-1) edge (1,-2)
(1,-2) edge (1,-3);
\end{tikzpicture}
\]
We redraw this as follows:
\[
\begin{tikzpicture}[scale=1.1]
\node at (-2,1){$\bullet$}; 
\node at (-2.25,1){$f_4$};
\node at (-1,1){$\bullet$};
\node at (-0.75,1.5){$1$};
\node at (0,1){$\bullet$};
\node at (0.3,1){$f_1$};
\node at (1,1){$\bullet$};
\node at (1.3,1){$f_2$};
\node at (3,1){$\bullet$};
\node at (3.2,1.5){$\ell_1$};
\node at (-1,0){$\bullet$};
\node at (-1.3,0){$f_3$};
\node at (1,0){$\bullet$};
\node at (1.25,0.45){$1$};
\node at (2.5,0){$\bullet$};
\node at (3.1,0.5){$\ell_2$};
\node at (1,-1){$\bullet$};
\node at (1.4,-1){$f_5$};
\node at (-2.5,2.3){$4$};
\node at (-2,2.3){$1$}; 
\node at (-1.5,2.3){$3$}; 
\node at (-1,2.3){$2$}; 
\node at (0,2.3){$8$}; 
\node at (0.8,2.3){$6$}; 
\node at (1.2,2.3){$5$}; 
\node at (3,2.3){$7$}; 
\node at (1,-2){$\bullet$};
\node at (1.2,-1.5){$\ell_3$};
\node at (1,-3.3){$0$};
\node at (0,-0.5){$\bullet$};
\node at (-0.25,-0.1){$\ell_4$};

\path[-,font=\scriptsize]
(-2.5,2) edge (-2,1)
(-2,2) edge (-2,1)
(-1.5,2) edge (-2,1)
(-1,2) edge (-1,1)
(0,2) edge (0,1)
(0.8,2) edge (1,1)
(1.2,2) edge (1,1)
(3,2) edge (3,1)
(-2,1) edge (-1,0)
(-1,1) edge (-1,0)
(0,1) edge (-1,0)
(1,1) edge (1,0)
(3,1) edge (2.5,0)
(-1,0) edge (1,-1)
(1,0) edge (1,-1)
(2.5,0) edge (1,-1)
(1,-1) edge (1,-2)
(1,-2) edge (1,-3);
\end{tikzpicture}
\]
Note we are writing $1$ for the operation $1_{O'}$ labelling the edges.  This redrawing process never loses information, so henceforth we draw $U(O) + U(O')$-trees in this simplified way.  

By Theorem \ref{thm:coproduct}, an operation in $O + O'$ is an equivalence class of $U(O) + U(O')$-trees.  In terms of our simplified style of drawing these trees,  the equivalence relation is generated by the following moves:

\begin{enumerate}
\item
Suppose a subtree consists of a vertex $v$ labelled by $f\in O_k$ with children $e_1,\dots e_k$, with each $s(e_i)$ having $n_i$ children of its own, and labelled by $f_i\in O_{n_i}$, and suppose that the edges $e_1,\dots , e_k$ are either not labelled or labelled by $1$. Then we can replace this subtree by the corolla with  $n_1 + \cdots + n_k$ leaves and it unique vertex labelled by $f\circ(f_1,\dots, f_k)$, and edge labels (if any) those of the original tree.  For example:
\[
\begin{tikzpicture}[scale = 0.8]
\node at (-1,0){$\bullet$};
\node at (-1.4,0){$f_1$};
\node at (1,0){$\bullet$};
\node at (1.4,0){$f_2$};
\node at (2.5,0){$\bullet$};
\node at (2.8,0){$f_3$};
\node at (1,-1){$\bullet$};
\node at (1.5,-1){$f$};
\node at (2.9,0.7) {$$};
\node at (0,-0.8) {$1$};
\node at (4.5,-0.5){$\leadsto$};
\path[-,font=\scriptsize]
(-1.5,1) edge (-1,0)
(-0.5,1) edge (-1,0)
(2.5,1) edge (2.5,0)
(-1,0) edge (1,-1)
(1,0) edge (1,-1)
(1,0) edge (1,1)
(1,0) edge (1.5,1)
(1,0) edge (0.5,1)
(2.5,0) edge (1,-1)
(1,-1) edge (1,-2);
\end{tikzpicture}
\qquad
\begin{tikzpicture}[scale = 0.8]
\node at (1,-1){$\bullet$};
\node at (2.8,-1){$f\circ(f_1,f_2,f_3)$};                
\path[-,font=\scriptsize]
(0.7,1) edge (1,-1)
(0.1,1) edge (1,-1)
(-0.5,1) edge (1,-1)
(1.3,1) edge (1,-1)
(1.9,1) edge (1,-1)
(2.5,1) edge (1,-1)
(1,-1) edge (1,-2);
\end{tikzpicture}
\]
(This picture does not show the whole tree, only the subtree being modified.  The sources of the edges at top can be vertices or leaves; the target of the edge at bottom can be a vertex or the root.)

\vskip 1em
\item Suppose the edges $e \maps u \to v$, $e' \maps v \to w$ are labelled by operations $\ell, \ell' \in O'_1$, and suppose that $v$ is the target of just one edge, namely $e$.  Then we can remove the vertex $v$, replace the edges $e$ and $e'$ by a single edge $f \maps u \to w$, and label this new edge by $\ell \circ\ell'$.  In pictures:
\[
\begin{tikzpicture}
\node at (-1,2){$$};
\node (1) at (0,2){$$};
\node at (0.2,1.5) {$\ell$};
\node (2) at (0,1) {$\bullet$}; 
\node (3) at (0,0){$\bullet$};
\node at (0.25,0.5) {$\ell'$};
\node (4) at (4,2){$$};
\node (5) at (4,0){$\bullet$};
\node at (2,1) {$\leadsto$};
\node at (4.5,1){$\ell \circ \ell'$};
\path[-,font=\scriptsize]
(0,2) edge (0,1)
(0,1) edge (0,0)
(4,2) edge (4,0);
\end{tikzpicture}
\]
(The source of the edge at top left can be a vertex or leaf; the target of the edge at bottom left can only be a vertex, not the root.)

\vskip 1em 
\item  Suppose a vertex $v$ is labelled by $1_O$. Then we can remove the label of the vertex:
\[
\begin{tikzpicture}
\node at (0.4,1) {$1_O$};
\node at (0,1) {$\bullet$}; 
\node at (4,1) {$\bullet$}; 
\node at (0,0){$$};
\node at (4,2){$$};
\node at (4,0){$$};
\node at (2,1) {$\leadsto$};
\path[-,font=\scriptsize]
(0,2) edge (0,1)
(0,1) edge (0,0)
(4,2) edge (4,0);
\end{tikzpicture}
\]
(The source of the edge at top can be a vertex or leaf; the target can be a vertex or the root.)
\vskip 1em
\item Suppose a vertex $v$ is unlabelled and is the target of just one edge $e \maps u \to v$ and suppose there is an edge $e' \maps v \to 0$.  Suppose $e$ is labelled by the identity $1$.  Then we can remove the vertex $v$ and replace the edges $e \maps u \to v$, $e' \maps v \to 0$ by a single edge $f \maps u \to 0$, which is unlabelled:
\[
\begin{tikzpicture}
\node at (0.4,1.5) {$1$};
\node at (0,1) {$\bullet$}; 
\node at (0,0){$$};
\node at (4,2){$$};
\node at (4,0){$$};
\node at (0,-0.3) {$0$};
\node at (4,-0.3) {$0$};
\node at (2,1) {$\leadsto$};
\path[-,font=\scriptsize]
(0,2) edge (0,1)
(0,1) edge (0,0)
(4,2) edge (4,0);
\end{tikzpicture}
\]
(The source of the edge at right can be a vertex or leaf; the target must be the root.)

\vskip 1em
\item We can add the label $1$ to any unlabelled edge not incident to the root: 
\[
\begin{tikzpicture}
\node at (0.2,1.5) {$$};
\node (2) at (0,1) {$\bullet$}; 
\node at (2,1.5) {$\leadsto$};
\node at (4,2){$$};
\node at (4,1){$\bullet$};
\node at (4.4,1.5){$1$};
\path[-,font=\scriptsize]
(0,1) edge (0,2)
(4,1) edge (4,2);
\end{tikzpicture}
\]
(The source of this edge can be a vertex or leaf; the target must be a vertex, not the root.)

\vskip 1em
\item Suppose a vertex $v$ is labelled by $f\cdot \sigma$, where $f\in O_k$ and $\sigma\in S_k$. Then we can permute its children by $\sigma^{-1}$ and substitute the label of $v$ by $f$:
\[
\begin{tikzpicture}
\node at (0,0) {$\bullet$};
\node at (0.5,0) {$f\cdot \sigma$};
\node at (-1,1.35) {$T_1$};
\node at (1,1.35) {$T_k$};
\node at (0,1) {$\cdots$};
\node at (2,0.5) {$\leadsto$};
\node at (4,0) {$\bullet$};
\node at (4.3,0) {$f$};
\node at (3,1.35) {${T_{\sigma^{-1}\negmedspace(1)}}$};
\node at (5,1.35) {${T_{\sigma^{-1}\negmedspace(k)}}$};
\node at (4,1) {$\cdots$};
\path[-,font=\scriptsize]
(0,0) edge (-1,1)
(0,0) edge (1,1)
(0,0) edge (0,-0.8)
(4,0) edge (3,1)
(4,0) edge (5,1)
(4,0) edge (4,-0.8);
\end{tikzpicture}
\]
(The target of the bottom edge can be a vertex or root. Here $T_1,\dots T_k$ denote the `full subtrees ending in $v$', that is,  $T_i$ is the subtree with edge incident to the root the $i$-th child of $v$ and $\inc_{T_i}$ consisting only of leaves and termini of the original tree.)

\end{enumerate}

\noindent Move (1) here corresponds to item (\ref{item:composition_2}) of Theorem \ref{thm:coproduct}.  Move (2) corresponds to item (\ref{item:composition_3}) of that theorem.  Moves (3) corresponds to item (\ref{item:unit_2}) of that theorem.  Moves (4) and (5) correspond to item (\ref{item:unit_3}). Move (6) corresponds to item (\ref{item:symmetry_2}).    Item (\ref{item:symmetry_3}) does not arise, since $O'$ has only unary operations.

If we repeatedly apply moves (1)--(5) in the forward direction, this process eventually terminates.  The resulting $U(O) + U(O')$-tree is independent of which order we apply these moves, thanks to  Newman's Lemma \cite{New}, also called the Diamond Lemma, which says that a terminating abstract rewriting system is confluent if it is locally confluent.  We call a $U(O) + U(O')$-tree \define{reduced} if it is the result of this process.

In our example, we obtain this reduced $U(O) + U(O')$-tree:
\[
\begin{tikzpicture}[scale=1.1]
\node at (3,1.2){$\ell_1\circ \ell_2$}; 

\node at (-3.1,1.2){$1$};
\node at (-1.95,1.2){$1$};
\node at (-1.35,1.2){$1$};
\node at (-0.8,1.2){$1$};
\node at (-0.15,1.2){$1$};
\node at (1.1,1.2){$1$};
\node at (1.85,1.2){$1$}; 

\node at (0.3,1){$$};
\node at (1.3,1){$$};
\node at (-1,0){$\bullet$};  
\node at (-2.2,0){$f_3\circ(f_4,f_1)$};
\node at (1,-1){$\bullet$};
\node at (1.6,-1){$f_5\circ f_2$};
\node at (-4.2,2.3){$4$};
\node at (-3,2.3){$1$}; 
\node at (-2,2.3){$3$}; 
\node at (-1,2.3){$2$}; 
\node at (0,2.3){$8$}; 
\node at (0.8,2.3){$6$}; 
\node at (1.8,2.3){$5$}; 
\node at (3,2.3){$7$}; 
\node at (1,-2){$\bullet$};
\node at (1.3,-1.6){$\ell_3$};
\node at (1,-3.3){$0$};
\node at (0.1,-0.2){$\ell_4$};

\path[-,font=\scriptsize]
(-4,2) edge (-1,0)
(-3,2) edge (-1,0)
(-2,2) edge (-1,0)
(-1,2) edge (-1,1)
(0.8,2) edge (1,-1)
(1.8,2) edge (1,-1)
(1,-1) edge (3,2)
(-1,1) edge (-1,0)
(0,2) edge (-1,0)
(-1,0) edge (1,-1)
(1,-1) edge (1,-2)
(1,-2) edge (1,-3);
\end{tikzpicture}
\]
Here is an example of the reduction process that illustrates subtleties concerning the edge incident to the root:
\[  
\begin{tikzpicture}[scale=0.8]
\node at (0,1){$\bullet$};
\node at (-0.4,1){$f$};
\node at (0,0){$\bullet$};
\node at (0.4,0.5){$1$};
\node at (-1,3){$1$};
\node at (-1,3){$1$};
\node at (0,3){$2$};
\node at (1,3){$3$};
\node at (0,-1.5){$0$};
\node at (3,1){$\stackrel{(4)}{\leadsto}$};
\node at (4,1){$$};
\path[-,font=\scriptsize]
(-1,2.5) edge (0,1)
(0,2.5) edge (0,1)
(1,2.5) edge (0,1)
(0,1) edge (0,-1);
\end{tikzpicture} 
\begin{tikzpicture}[scale=0.8]
\node at (0,1){$\bullet$};
\node at (-0.4,1){$f$};
\node at (0,0){$\bullet$};
\node at (0.4,0.5){$1$};
\node at (-1,3){$1$};
\node at (0,3){$2$};
\node at (1,3){$3$};
\node at (0,-1.5){$0$};
\node at (3,1){$\stackrel{(3)}{\leadsto}$};

\node at (-0.8,1.8){$1$};
\node at (0.1,1.8){$1$};
\node at (0.8,1.8){$1$};

\path[-,font=\scriptsize]
(-1,2.5) edge (0,1)
(0,2.5) edge (0,1)
(1,2.5) edge (0,1)
(0,1) edge (0,-1);
\end{tikzpicture} 
\begin{tikzpicture}[scale=0.8]
\node at (-2,0){};
\node at (0,1){$\bullet$};
\node at (-0.4,1){$f$};
\node at (-1,3){$1$};
\node at (0,3){$2$};
\node at (1,3){$3$};
\node at (0,-1.5){$0$};

\node at (-0.8,1.8){$1$};
\node at (0.1,1.8){$1$};
\node at (0.8,1.8){$1$};
\path[-,font=\scriptsize]
(-1,2.5) edge (0,1)
(0,2.5) edge (0,1)
(1,2.5) edge (0,1)
(0,1) edge (0,-1);
\end{tikzpicture} 
\]

In a reduced $U(O) + U(O')$-tree, the edge incident to the root is unlabelled.  Edges incident to leaves are labelled by operations in $O'_1$.  Edges incident to neither leaves nor the root are labelled by operations in $O'_1$ different from the identity. So, to turn our reduced $U(O) + U(O')$-tree into a $(O,O'_1)$-labelled tree, we apply the following rule.  Only one of these two cases will apply:

\begin{itemize}
\item
Suppose $v$ is the target of a single edge $e \maps u \to v$ and there is an edge $e' \maps v \to 0$. Suppose $e$ is labelled by the operation $\ell \ne 1_{O'}$ in  $ O'_1$.  Then we remove the vertex $v$ and replace the edges $e$ and $e'$ by a single edge $f \maps u \to 0$, which is labelled by $\ell$:
\[
\begin{tikzpicture}
\node at (0.2,1.5) {$\ell$};
\node at (0,1) {$\bullet$}; 
\node at (0,0){$$};
\node at (4.2,1){$\ell$};
\node at (0,-0.4){$0$};
\node at (4,-0.4){$0$};
\node at (2,1) {$\leadsto$};
\path[-,font=\scriptsize]
(0,2) edge (0,1)
(0,1) edge (0,0)
(4,2) edge (4,0);
\end{tikzpicture}
\]
\item 
Suppose $v$ is the target of more than one edge and there is an edge $e \maps v \to 0$.  Then label the edge $e$ by $1$  (that is, $1_{O'}$).  For example:
\[
\begin{tikzpicture}
\node at (0,1) {$\bullet$}; 
\node at (4,1) {$\bullet$}; 
\node at (0,-0.4){$0$};
\node at (4.3,0.5){$1$};
\node at (4,-0.4){$0$};
\node at (2,1) {$\leadsto$};
\path[-,font=\scriptsize]
(0,2) edge (0,1)
(-1,2) edge (0,1)
(1,2) edge (0,1)
(0,1) edge (0,0)
(4,2) edge (4,0)
(4,2) edge (4,1)
(3,2) edge (4,1)
(5,2) edge (4,1);
\end{tikzpicture}
\]
\end{itemize}
The result is no longer a $U(O)+U(O')$-tree since now every edge, even the edge incident to the root, is labelled with an operation of $O'$.  

In our running example, this rule produces the following $(O,O'_1)$-tree:

\[
\begin{tikzpicture}[scale=1.1]
\node at (3,1.2){$\ell_1\circ \ell_2$}; 
\node at (1.9,1.2){$1$}; 
\node at (1.15,1.2){$1$};
\node at (-0.15,1.2){$1$};
\node at (-0.75,1.2){$1$};
\node at (-1.35,1.2){$1$};
\node at (-1.9,1.2){$1$};
\node at (-3,1.2){$1$};
\node at (0.3,1){$$};
\node at (1.3,1){$$};
\node at (-1,0){$\bullet$};  
\node at (-2.2,0){$f_3\circ(f_4,f_1)$};
\node at (1,-1){$\bullet$};
\node at (1.5,-1){$f_5\circ f_2$};
\node at (-4.2,2.3){$4$};
\node at (-3,2.3){$1$}; 
\node at (-2,2.3){$3$}; 
\node at (-1,2.3){$2$}; 
\node at (0,2.3){$8$}; 
\node at (0.8,2.3){$6$}; 
\node at (1.8,2.3){$5$}; 
\node at (3,2.3){$7$}; 
\node at (1.2,-1.6){$\ell_3$};
\node at (1,-2.3){$0$};
\node at (0.1,-0.2){$\ell_4$};

\path[-,font=\scriptsize]
(-4,2) edge (-1,0)
(-3,2) edge (-1,0)
(-2,2) edge (-1,0)
(-1,2) edge (-1,1)
(0.8,2) edge (1,-1)
(1.8,2) edge (1,-1)
(1,-1) edge (3,2)
(-1,1) edge (-1,0)
(0,2) edge (-1,0)
(-1,0) edge (1,-1)
(1,-1) edge (1,-2);
\end{tikzpicture}
\]

One can check that this rule always gives a $(O,O'_1)$-tree with no internal edge labelled by $1$ (that is, $1_{O'}$) and no unary vertex labelled by $1_O$, and that it loses no information. Furthermore, it is easy to see that move (6) generates the equivalence relation in the statement of the lemma. Finally, one can check that every such equivalence class of $(O,O'_1)$-trees arises from a $U(O)+U(O')$-tree via this process.  Thus, we obtain the desired result.
\end{proof}

We are now finally able to prove Theorem \ref{thm:bijection}:


\vskip 1em
\noindent \textbf{Theorem 9.}  \textit{The $n$-ary operations in the phylogenetic operad are in one-to-one correspondence with phylogenetic $n$-trees.}

\begin{proof}  
The statement of this theorem is somewhat inadequate, because we really have a specific bijection between $n$-ary operations in $\Phyl$ and phylogenetic $n$-trees in mind.  By Lemma \ref{thm:generalized_phyl_trees} we know that there is a bijection between operations of $\Com+[0,\infty)$ and equivalence classes of $(\Com,[0,\infty))$-trees without  unary vertices and such that no internal edge is labelled by $0$.  Since the symmetric group action on $\Com$ is trivial, each such equivalence class $[T]$ consists of all the trees obtained from $T$ by varying its planar structure. Hence the bijection of Lemma \ref{thm:generalized_phyl_trees} sends operations of $\Com+[0,\infty)$ to phylogenetic trees.
\end{proof}

\section{The $\wco$ construction and the phylogenetic operad}
\label{section:w}

The $\wco$ construction was introduced by Boardman and Vogt  \cite{BV} to study homotopy invariant algebraic structures on topological spaces.    In their construction, Boardman and Vogt used elements of $[0,1]$ to label edges of trees, using the fact that this space becomes a commutative topological monoid under the operation
\[    x \star y = x + y - xy  \]
However, this topological monoid is isomorphic to the monoid $[0,\infty]$ introduced in the previous section:

\begin{lemma} \label{monoid iso}
There is an isomorphism 
$
\psi \maps ([0,\infty],+)\to ([0,1],\star).
$
\end{lemma}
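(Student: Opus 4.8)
The plan is to factor $\psi$ through the multiplicative monoid $([0,1],\cdot)$, which makes both halves transparent. The first observation is the identity
\[ 1-(x\star y) = 1 - x - y + xy = (1-x)(1-y), \qquad x,y\in[0,1]. \]
Hence the involution $r\maps[0,1]\to[0,1]$, $r(x)=1-x$, is a homeomorphism carrying $\star$ to ordinary multiplication and the $\star$-identity $0$ to the multiplicative identity $1$; that is, $r$ is an isomorphism of topological monoids $([0,1],\star)\xrightarrow{\ \sim\ }([0,1],\cdot)$.

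Second, I would exhibit an isomorphism of topological monoids $([0,\infty],+)\xrightarrow{\ \sim\ }([0,1],\cdot)$ via $t\mapsto e^{-t}$, with the convention $e^{-\infty}=0$. This map is a bijection; it is a monoid homomorphism since $e^{-(s+t)}=e^{-s}e^{-t}$ for $s,t\in[0,\infty)$ and $e^{-(\infty+t)}=0=e^{-\infty}e^{-t}$, and it sends $0$ to $1$; it is continuous, including at $\infty$, because $e^{-t}\to 0$ as $t\to\infty$ with respect to the topology on $[0,\infty]$ for which $\tan\maps[0,\pi/2]\to[0,\infty]$ is a homeomorphism; and being a continuous bijection from a compact space to a Hausdorff space it is a homeomorphism.

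Finally I would set $\psi = r^{-1}\circ(t\mapsto e^{-t})$, so explicitly $\psi(t)=1-e^{-t}$ for $t\in[0,\infty)$ and $\psi(\infty)=1$. As a composite of isomorphisms of topological monoids, $\psi$ is itself one, which proves the lemma. For the record one checks directly that $\psi(0)=0$, that $\psi(\infty)\star\psi(t)=1\star\psi(t)=1=\psi(\infty)$ (consistent with $\infty+t=\infty$), and, using the identity above,
\[ \psi(s)\star\psi(t) = 1-(1-\psi(s))(1-\psi(t)) = 1-e^{-s}e^{-t} = 1-e^{-(s+t)} = \psi(s+t). \]

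There is no serious obstacle here; the only points needing care are the bookkeeping at the point at infinity — confirming that $\psi$ is a continuous bijection there and respects $\infty+t=\infty$ — and checking that the topology placed on $[0,\infty]$ (the one making $\tan$ a homeomorphism) is the one under which $t\mapsto e^{-t}$ extends continuously to $\infty$. Both are immediate.
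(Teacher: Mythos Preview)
Your proof is correct and follows essentially the same approach as the paper: factor through $([0,1],\cdot)$ via the involution $x\mapsto 1-x$ and the exponential/logarithm between $[0,\infty]$ and $[0,1]$. You add a bit more care about continuity at $\infty$ than the paper does, but the argument is the same.
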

\begin{proof}
We use an argument due to Trimble \cite{Tri}.   Note that 
\[ x\star y=1-(1-x)(1-y). \]
Thus, there is an isomorphism of topological monoids 
\[ \begin{array}{ccl} 
\phi \maps ([0,1],\star) &\to& ([0,1], \cdot) \\                                                              x & \mapsto& 1-x. 
\end{array}
\]
Further, the topological monoid $([0,1],\cdot)$ is isomorphic to $([0,\infty],+)$ via
the map
\[ \begin{array}{ccl} 
([0,1],\cdot) &\to& ([0,\infty],+) \\
 x&\mapsto& -\ln x.
\end{array}
\qedhere
\]
\end{proof}

\noindent So, we can freely adapt Boardman and Vogt's original construction by using elements of $[0,\infty]$ instead of $[0,1]$. 
For this, we first recall the definition of $O$-trees, which we  introduced in Section \ref{sec:free}. Given two planar $n$-trees with  $k$-ary vertices labelled by $k$-ary operations of $O$, we say that they are isomorphic if there is an isomorphism of their underlying planar $n$-trees such that the labelling of each vertex in the first tree equals the labelling of the corresponding vertex in the second. Then an $O$-n-tree is an isomorphism class of planar $n$-trees with  $k$-ary vertices labelled by $k$-ary operations of $O$.

Given an operad $O$,  we define a new operad $\wco(O)$, where for any natural number $n=0,1,2,\dots $ an element of $\wco(O)_n$ is an equivalence class of pairs $(T,l)$, where
\begin{enumerate} 
\item $T$ is an $O$-$n$-tree
\item  a \define{length map} $\ell \maps E\rightarrow [0,\infty]$ such that external edges are mapped to $\infty$. For any $e\in E$ we call $l(e)$ the \define{length} of $e$.
\end{enumerate} 
The equivalence relation on these pairs is generated by the following moves.  
For any pair $(T,l)$ in $\wco(O)_n$:
\begin{enumerate}
\item \label{item: w identity} any subtree of $T$ consisting of one vertex labelled by $1_O \in O_1$ together with the two adjacent edges labelled by $\ell_1$ and $\ell_2$ can be replaced by an edge labelled by $\ell_1+\ell_2$:
\[
\begin{tikzpicture}
\node (1) at (0,2){};
\node at (-0.3,1.5) {$\ell_1$};
\node (2) at (0,1) {$\bullet$}; 
\node at (0.4,1) {$1_O$};
\node (3) at (0,0){};
\node at (-0.3,0.5) {$\ell_2$};
\node at (2,1) {$\sim$};
\node at (3.3,1){$\ell_1+ \ell_2$};
\path[-,font=\scriptsize]
(0,2) edge (0,1)
(0,1) edge (0,0)
(4,2) edge (4,0);
\end{tikzpicture}
\]

\item \label{item: w symmetry}  any subtree $S$ of $T$ formed by a vertex $v$ in $T$ of arity $r$ labelled by $f\cdot\sigma$, where $\sigma \in S_r$ and $f\in O_r$, 
can be substituted with the subtree obtained from $S$ by permuting $S$ by $\sigma$ and substituting the label of $v$ by $f$:
\[
\begin{tikzpicture}
\node (1) at (-1,1) {$T_1$};
\node  at (0,1) {$\dots$};
\node (2) at (1,1) {$T_r$};
\node (3) at (0,0) {$\bullet$};
\node at (0.8,0) {$f\cdot\sigma$};
\node at (3,0.5) {$\sim$};
\node (4) at (4,1) {$T_{\sigma^{-1}(1)}$};
\node at (5,1) {$\dots$};
\node (5) at (6,1) {$T_{\sigma^{-1}(r)}$};
\node (6) at (5,0) {$\bullet$};
\node at (5.4,0) {$f$};
\path[-,font=\scriptsize]
(1) edge (0,0)
(2) edge (0,0)
(4) edge (5,0)
(5) edge (5,0);
\end{tikzpicture}
\]
\item \label{item: w comp} any edge of length $0$ may be shrunk away by composing the labels of its adjacent vertices  using the composition in $O$.
\end{enumerate}
The space $\wco(O)_n$ inherits a topology from the spaces $O_0,\dots, O_n$ and from $[0,\infty]$.  Let $(T,\ell)$ be a pair of an $O$-$n$-tree and a length map, and denote the underlying isomorphism class of planar $n$-trees of $T$ by $\lambda$.  Given $\lambda$, $(T,\ell)$ is uniquely determined by the labels assigned to edges and vertices of $\lambda$, so we can see it as a point of the set
\[
\prod_{j} O_j^{m_j}\times [0,\infty]^{r}
\]
where $m_j$ is the number of vertices of $\lambda$ having arity $j$ and $r$ is the number of internal edges of $\lambda$.  We endow this set with the product topology. Taking the disjoint union over all isomorphism classes of planar $n$-trees and taking the quotient of the resulting space by the above equivalence relations, an element of $\wco(O)_n$ is a point in the following topological space: 
\[
\begin{fracinline}{\left(\coprod_{\lambda}\prod_{j_\lambda} O_{j_\lambda}^{m_{j_\lambda}}\times [0,\infty]^{r_\lambda}\right)}{\sim}\end{fracinline}
\]
where we take the topology to be the quotient topology. 
In \cite{BV} the space $\wco(O)_n$ is described as 
\[
\begin{fracinline}{\left(\coprod_{\lambda}\prod_{j_\lambda} O_{j_\lambda}^{m_{j_\lambda}}\times [0,\infty]^{r_\lambda}\times S_n\right)}{\sim}\end{fracinline}
\]
where $S_n$ is endowed with the discrete topology. This is because they consider $\lambda$ as being the underlying graph of a tree, while for us $\lambda$ is the underlying isomorphism class of planar $n$-trees, and we define an $n$-tree to have leaves labelled by $1,\dots , n$. 

Given two $O$-trees with length maps $(T,\ell)$ and $(T',\ell')$, we define their partial composite $(T,\ell)\circ_i(T',\ell')$ as the partial composite $T\circ_iT'$ of the underlying $O$-trees, together with the length function that sends every edge to its image under $l$ or $l'$ and the new internal edge that arises from the grafting to $\infty$. The unit for this composition is given by the $1$-tree without vertices and unique edge labelled by $\infty$. Similarly, for  any $O$-$n$-tree $T$ and $\sigma\in S_n$  we define $(T,l)\cdot\sigma=(T\cdot \sigma, l)$. These operations are easily seen to be well-defined on equivalence classes and  to be continuous, and thus endow $\wco(O)$ with the structure of a topological operad. 

The operad $\wco(O)$ is closely related to the coproduct $O+[0,\infty]$. To see this, 
recall that by Lemma \ref{thm:generalized_phyl_trees} operations of $O+[0,\infty]$ can be identified with equivalence classes of $O$-trees with no unary vertex labelled by $1_O$, and edges labelled by numbers in $[0,\infty]$ such that internal edges are not labelled by zero, where the equivalence relation is given by the symmetric group action on operations of $O$.  

Thus an operation of $O+[0,\infty]$ is in $\wco(O)$ if and only if it is an equivalence class of an $O$-$n$-tree with all external edges labelled by $\infty$.  From this we see that the unit of $O+[0,\infty]$ is not in $\wco(O)$, so $\wco(O)$ fails to be a suboperad of $O+[0,\infty]$.  However, $\wco(O)$ is a non-unital suboperad of $O+[0,\infty]$, and its unit is an idempotent of $O+[0,\infty]$:

\begin{theorem}
\label{thm:W-construction}
The inclusions $\iota_n\maps \wco(O)_n\to O+[0,\infty]_n$  induce a morphism of non-unital topological operads. Moreover, the spaces  $W(O)_n$ and $O+[0,\infty]_n$ are homotopy equivalent if $n\ne 1$.
\end{theorem}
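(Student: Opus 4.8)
The plan is to establish the two assertions separately, using throughout the description of operations of a coproduct $O+[0,\infty]$ as labelled trees given by Lemma~\ref{thm:generalized_phyl_trees}.

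\emph{That $\iota$ is a morphism of non-unital operads.} First I would identify the target of $\iota_n$ concretely. By Lemma~\ref{thm:generalized_phyl_trees}, an operation of $O+[0,\infty]$ is an equivalence class of $(O,[0,\infty])$-trees with no unary vertex labelled $1_O$ and no internal edge labelled $0$, the only remaining identifications being the symmetry moves of Definition~\ref{defn:eq relation}. Running the $\wco(O)$-moves~(\ref{item: w identity}) and~(\ref{item: w comp}) forward inside a $\wco(O)_n$-class removes every $1_O$-labelled unary vertex and contracts every $0$-length edge (legitimately, since external edges are labelled $\infty\ne 0$), so each such class has a representative of exactly this reduced shape, with the same residual move~(\ref{item: w symmetry}); hence $\iota_n$ is the inclusion of those reduced classes all of whose external edges are labelled $\infty$, and it is well defined and injective. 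It is $S_n$-equivariant (both actions relabel leaves) and respects partial composition: grafting merges the external root edge of one tree (labelled $\infty$) with the external $i$th leaf edge of the other (labelled $\infty$) into one internal edge labelled $\infty+\infty=\infty$, which is exactly the composition recipe of $\wco(O)$ and stays inside the subspace. Continuity is clear from the explicit quotient topology of Appendix~\ref{A:topology of phyl}, $\iota_n$ being induced by the inclusion of the sub-coproduct of strata with external-edge coordinates frozen at $\infty$. It is not a morphism of \emph{unital} operads: the unit of $\wco(O)$ is the $1$-tree with no vertices and edge $\infty$, not the unit of $O+[0,\infty]$ (edge $0$); as $\infty+\infty=\infty$, its image is idempotent.

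\emph{The homotopy equivalence for $n\ne 1$.} I would exhibit $\wco(O)_n$ as a strong deformation retract of $(O+[0,\infty])_n$. Choose a continuous increasing $c\maps [0,1]\to[0,\infty]$ with $c(0)=0$, $c(1)=\infty$ (say $c(t)=\tan(\pi t/2)$), and on the coproduct of strata underlying $(O+[0,\infty])_n$ let $\widetilde H_t$ add $c(t)$ to the length of every \emph{external} edge, leaving internal-edge lengths and vertex labels alone. This is continuous in (labels, $t$) since addition on $[0,\infty]$ is continuous, and it respects each generating move. The crucial case is the $1_O$-vertex-removal move: contracting a unary $1_O$-vertex whose adjacent edges have lengths $\mu,\nu$ produces an edge of length $\mu+\nu$, and for $n\ne 1$ at least one of those two edges is internal --- the only configuration with both external is a one-vertex $1$-tree, which forces $n=1$ --- so $\widetilde H_t$ adds $c(t)$ to exactly one summand before the merge and to the sum after it, and the two agree by associativity; moreover the merged edge is external in precisely the cases where a summand was shifted. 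The $0$-edge-contraction and symmetry moves are untouched by $\widetilde H_t$. Thus $\widetilde H_t$ descends to a continuous $H\maps (O+[0,\infty])_n\times[0,1]\to (O+[0,\infty])_n$ (the quotient map times $\id_{[0,1]}$ is again a quotient map, $[0,1]$ being locally compact). Then $H_0=\id$; $H_1$ sends every external edge length to $\ell+\infty=\infty$, so $H_1$ factors through $\wco(O)_n$; and $H_t$ is the identity on $\wco(O)_n$, where every external edge already has length $\infty$ and $\infty+c(t)=\infty$. Hence $\iota_n$ is a homotopy equivalence for $n\ne 1$.

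\emph{The main obstacle.} Everything rests on the compatibility of $\widetilde H_t$ with the $1_O$-vertex-removal gluing; this is what forces the homotopy to be \emph{additive} (rather than, say, a coordinatewise reparametrisation of $[0,\infty]$) and is exactly what breaks down for the one-vertex $1$-tree, where both edges adjacent to the $1_O$-vertex are external --- the source of the hypothesis $n\ne 1$. The remaining verifications (descent of $\widetilde H_t$ and of $\iota_n$ to the quotient, for which one leans on the concrete topology of Appendix~\ref{A:topology of phyl}, and the check that the merged edge's internal/external status matches the bookkeeping above) are routine, the only "coincidence" being $\infty+\infty=\infty$, which is already what makes $\wco(O)$ closed under the composition of $O+[0,\infty]$.
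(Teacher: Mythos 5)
Your proposal is correct and rests on the same core idea as the paper's proof: a homotopy of $(O+[0,\infty])_n$ that fixes all vertex labels and internal edge lengths and deforms only the external edge lengths, with the hypothesis $n\ne 1$ entering exactly where you locate it (the $1_O$-merging move with \emph{both} adjacent edges external occurs only for the one-unary-vertex $1$-tree, which is what would break equivariance of the deformation with the defining relations). The implementations differ in direction, though: you add $c(t)$ with $c(1)=\infty$, pushing external lengths to $\infty$ and exhibiting $\iota_n(\wco(O)_n)$ directly as a strong deformation retract --- which in fact shows the slightly sharper statement that $\iota_n$ itself is a homotopy equivalence --- whereas the paper transports external lengths through the isomorphism $([0,\infty],+)\cong([0,1],\star)$ of Lemma \ref{monoid iso} and scales by $1-t$, which as printed shrinks external lengths to $0$ and retracts onto the slice of trees whose external edges have length $0$; that slice then still has to be identified with $\wco(O)_n$ by relabelling $0$ as $\infty$, a step the paper leaves implicit and which your version avoids. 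Your verification of compatibility with the generating moves, of the $n\ne 1$ obstruction, and of continuity (via lifts to the strata and the fact that a quotient map times $[0,1]$ is a quotient map) is at least as complete as the paper's, which simply asserts continuity of the inclusion from the remarks at the end of Section \ref{sec:coproduct} and writes down the homotopy formula. The only point worth making explicit in your write-up is that $H_1$, corestricted to $\wco(O)_n$ with its own quotient topology rather than the subspace topology, is continuous (the same stratum-wise formula descends); this point-set detail is not addressed in the paper either.
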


\begin{proof}
By the previous discussion, and the remarks at the end of Section \ref{sec:coproduct}, it is easy to see that the inclusion is continuous. The contracting homotopy 
\[
F\maps  O+[0,\infty]_n \times [0,1]\to  O+[0,\infty]_n 
\]
is defined as follows:
\[
((T,l),t)\mapsto (T,\hat{l}_t)
\]
where 
\[
\hat{l}_t\maps E\to [0,\infty]\maps e \mapsto \begin{cases}
l(e), \text{ if $e$ is an internal edge}\\
\alpha((1-t)\alpha^{-1}(l(e)), \text{ otherwise.}
\end{cases}
\]
with $\alpha\maps ([0,1],\star)\to ([0,\infty],+)$  the inverse to the isomorphism of Lemma \ref{monoid iso}.
\end{proof}

On the other hand, the operads $O+[0,\infty]$ and $O+[0,\infty)$ are closely related to $\Com_+$. To see how, we first need to  choose a convenient category of topological spaces, such as the category of compactly generated Hausdorff spaces \cite{Str}. 
Note that the spaces $\Phyl_n$ are metric spaces and are thus compactly generated Hausdorff spaces. We consider the model structure on this category in which weak equivalences are weak homotopy equivalences and fibrations are Serre fibrations. This model structure induces a model structure on the category of operads in which weak equivalences and fibrations are given by  pointwise weak equivalences and fibrations, respectively \cite{BM1}. Berger and Moerdijk proved that for this model structure, if $O$ is a $\Sigma$-cofibrant and well-pointed operad, $\wco(O)$ gives a cofibrant resolution of $O$  \cite[Thm.\ 5.1]{BM2} and further they showed that in this case   algebras over $\wco(O)$ are invariant under homotopy in the sense of Boardman and Vogt \cite[Thm.\ 3.5]{BM1}.    So, we make the following definition:

\begin{defn}
A morphism of topological operads $f\maps O\to O'$ is a \define{weak equivalence} if for every $n$ the map $f_n$ is a weak homotopy equivalence. We say that $f$ is a \define{homotopy equivalence} if there exists $g\maps O'\to O$ such that $g_n$ is a homotopy inverse to $f_n$ for every $n$.
\end{defn}

Now, since the intervals $[0,\infty]$ and $[0,\infty)$ are contractible,  we have:

\begin{proposition}
\label{C:contra}
Suppose that $O$ is an operad in which every space $O_n$ is contractible. Then $O+[0,\infty]$ and $O+[0,\infty)$ are both homotopy equivalent to the terminal operad, $\Com_+$.
\end{proposition}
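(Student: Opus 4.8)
The plan is to reduce the proposition to one topological assertion: for $M=[0,\infty]$ and for $M=[0,\infty)$, every space $(O+M)_n$ is contractible. Granting this, the unique operad morphism $p\maps O+M\to\Com_+$ supplied by terminality of $\Com_+$ has each $p_n$ a homotopy equivalence (a map from a contractible space to a point); any operad section $s\maps\Com_+\to O+M$ of $p$ is then a pointwise homotopy inverse, which is exactly a homotopy equivalence of operads in the sense of the paper. Such a section is available whenever $O$ is equipped with a commutative monoid structure $(m\in O_2,\ u\in O_0)$---as $\Com_+$ is---by sending the binary and nullary operations of $\Com_+$ to the corollas on $m$ and on $u$ with all edges labelled $0$. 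So everything is in the contractibility claim; I will argue it for $M=[0,\infty]$, the case $M=[0,\infty)$ being word-for-word the same since $[0,\infty)$ is likewise a contractible interval.

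By Lemma \ref{thm:generalized_phyl_trees}, arity-$n$ operations of $O+M$ are equivalence classes of $(O,M)$-trees having no unary vertex labelled $1_O$ and no internal edge labelled $0$, the only relation being the move that permutes the children of an $O$-vertex and correspondingly untwists its label; and, by the description of the topology just after Theorem \ref{thm:coproduct}, $(O+M)_n$ is the resulting quotient of $\coprod_\lambda\bigl(\prod_j O_j^{\,m_{j,\lambda}}\times M^{E_\lambda}\bigr)$, where $\lambda$ runs over isomorphism classes of planar $n$-trees, $m_{j,\lambda}$ is the number of $j$-ary vertices of $\lambda$, and $E_\lambda$ is its set of edges. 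Fix a deformation retraction $\rho\maps M\times[0,1]\to M$ of $M$ onto $\{0\}$ with $\rho(x,s)\neq 0$ whenever $x\neq 0$ and $s<1$---obtained by transporting the straight-line contraction of $[0,1]$ onto the $\star$-unit $0$ through the isomorphism of Lemma \ref{monoid iso}. Define $H\maps(O+M)_n\times[0,1]\to(O+M)_n$ by scaling every edge length by $\rho(-,s)$ and, at $s=1$ where all edges then have length $0$, contracting the resulting internal edges and composing their adjacent $O$-labels. For $s<1$ this is well defined and continuous, since scaling the lengths commutes with the sole relation (which alters $O$-labels, not lengths) and never produces an internal edge of length $0$; moreover $H_0=\mathrm{id}$, $H$ fixes pointwise the subspace $K$ of corollas with all edges labelled $0$, and $H_1$ maps into $K$. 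Since $K\iso O_n$ for $n\ge 1$ and $K\iso O_0$ for $n=0$ (nonempty because $O_0$ is contractible), and $O_n$ is contractible by hypothesis, $(O+M)_n$ deformation retracts onto a contractible space, hence is contractible.

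I expect the one genuine obstacle to be the continuity of $H$ at the terminal time $s=1$: a tree with an arbitrarily short internal edge must be identified, in the limit, with the tree obtained by contracting that edge and composing the two adjacent $O$-operations, and one must invoke the explicit quotient topology on $\Phyl_n$---equivalently on $(O+M)_n$---constructed in Section \ref{sec:coproduct} and Appendix \ref{A:topology of phyl} to see that this degeneration is continuous, so that $H$ descends to a continuous map on $(O+M)_n\times[0,1]$. The remaining points are routine: that $\rho(-,s)$ commutes with the relabelling relation, the small bookkeeping for $n=1$ (where a corolla on $1_O$ collapses to the trivial tree, but the same scaling argument still shows $(O+M)_1$ is contractible), and that a section $s$ of $p$, once available, is automatically a pointwise homotopy inverse to $p$ because each $(O+M)_n$ is contractible.
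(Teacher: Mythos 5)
Your contractibility argument is, in substance, the paper's own: the paper also contracts each space $(O+M)_n$ by shrinking all edge labels to $0$ (it asserts outright that the class of an $n$-corolla with zero-length edges is a deformation retract, which implicitly also uses contractibility of $O_n$ to deal with the vertex label, exactly the role your subspace $K$ plays), and, like you, it leaves the delicate point---continuity of the degeneration at the final time, where a short internal edge must be identified with its contraction---to the explicit description of the quotient topology. The genuine gap is in your concluding step. You propose to realize the homotopy inverse as an operad section $s\maps \Com_+\to O+M$ of the terminal morphism $p$, built from a commutative monoid structure $(m\in O_2,\ u\in O_0)$; but no such structure is given by the hypotheses (and it is $O$, not $\Com_+$, that would need it). Contractibility of the spaces $O_n$ does not produce an $S_2$-invariant, associative, unital binary operation: for instance the Barratt--Eccles operad has every $O_n$ contractible but the $S_n$-action on $O_n$ free for $n\ge 2$, so $O_2$ has no $S_2$-fixed point, and since the only relation in Lemma \ref{thm:generalized_phyl_trees} untwists a label against a permutation of children, freeness of the actions rules out any $S_2$-invariant binary operation of $O+M$ as well; no operad section of $p$ exists in that generality.

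Fortunately the section is also unnecessary, and this is where your reading of ``homotopy equivalence'' departs from the paper's. The paper's proof never constructs an operad morphism $\Com_+\to O+M$: it uses only the terminal morphism together with contractibility of each $(O+M)_n$, so that an arbitrary choice of point of $(O+M)_n$ serves as a homotopy inverse to $p_n$ for each $n$ separately. Read this way (which must be the intended reading, since under the stricter one the statement would already fail for the Barratt--Eccles example above), your first paragraph should end by concluding directly from the contractibility claim, with the commutative-monoid digression deleted. Two smaller points: your identification $K\iso O_n$ deserves a word of justification---continuity of the inverse $K\to O_n$ follows, for example, from the operad morphism $O+M\to O$ induced by the identity of $O$ and the constant morphism $M\to O_1$; and your description of the topology on $(O+M)_n$ as a quotient of $\coprod_\lambda\bigl(\prod_j O_j^{m_{j,\lambda}}\times M^{E_\lambda}\bigr)$ is itself a claim requiring the kind of analysis carried out in Appendix \ref{A:topology of phyl}, though here your level of detail matches the paper's.
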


\begin{proof}
First note that the underlying topological spaces of both operads $O+[0,\infty]$ and $O+[0,\infty)$ are contractible: the constant  map taking $O+[0,\infty)_n$, respectively $O+[0,\infty]_n$, to the one-point space consisting of the equivalence class of the $n$-corolla with all edges labelled by $0$  exhibit this one-point space as a deformation retract of $O+[0,\infty)_n$ and $O+[0,\infty]_n$.  Any operad with a one-point space in every arity is canonically isomorphic to the operad $\Com_+$, and furthermore the constant maps are easily seen to extend to morphisms of operads.  Therefore, both operads are homotopy equivalent to $\Com_+$.
\end{proof}

We thus have the following commutative diagram:
\[
\begin{tikzpicture}
\node (1) at (1,1) {$O+[0,\infty)$};
\node (2) at (4,1) {$O$};
\node (3) at (1, -1) {$O+[0,\infty]$};
\path[->]
(1) edge node[above] {$\alpha$}(2)
(1) edge (3)
(3) edge node[below] {$\beta$} (2);
\end{tikzpicture}
\]
in which the morphisms $\alpha$ and $\beta$ are weak equivalences, and hence so is the inclusion of $O+[0,\infty)$ in $ O+[0,\infty]$, by the $2$-out-of-$3$ property.

In conclusion, suppose we have a Markov process on a finite set $X$.
We saw in Theorem \ref{thm:coalgebra_of_phyl} that there is a unique way to extend this to a coalgebra of the phylogenetic operad.  In Theorem \ref{thm:extension} we saw that this can be further extended to a coalgebra of $\Com+[0,\infty]$.  We thus have the 
following commutative diagram:  
\[ 
\begin{tikzpicture}
\node (1) at (-2,2) {$\Com+[0,\infty) = \Phyl$};
\node (2) at (2,2) {$\Coend(\mathbb{R}^X)$};
\node (3) at (-6,0) {$\wco(\Com)$};
\node (4) at (-2,0) {$\Com+[0,\infty]$};
\node (5) at (-6,2) {$\Com$};
\path[->]
(1) edge node[above]{$$}(2)
(1) edge (4)
(3) edge node[below]{$\iota$}(4)
(1) edge node[above]{$\alpha$}(5)
(3) edge (5)
(4) edge node[below]{$\beta$} (5);
\draw[->](4) -- node[below] {$$} (2);
\end{tikzpicture}
\]
Except for the non-unital inclusion $\iota$, all the arrows are operad homomorphisms, and those in the two triangles at left are weak equivalences.  For further explorations of operads related to the phylogenetic operad, see the work of Devadoss and Morava \cite{DM1,DM2}.

\subsection*{Acknowledgements}

We thank the denizens of the $n$-Category Caf\'e for their help, especially Todd Trimble
for preparing a proof of a generalized version of Lemma \ref{monadic}, and Richard Garner for pointing out that the case we needed could already be found in the work of Boardmann and Vogt. We also thank  the anonymous referee for his/her many helpful comments.  The second author thanks the ETH Z\"urich for supporting her visit to U.\ C.\ Riverside, during which part of this work was carried out.  She also thanks the Mathematics Department of U.\ C.\ Riverside for their hospitality.

\appendix

\section{The topology on $\Phyl_n$}\label{A:topology of phyl}
Here we   provide a proof of Theorem  \ref{thm:homeo}, in which we related the topology on the space of $n$-ary operations of the phylogenetic operad with the topology on the space $\T_n$ of metric $n$-trees introduced in \cite{BHV}:

\begin{customthm}{11} 
\label{thm:homeo app}
For every $n \ne 1$ there is a homeomorphism 
\[           \Phyl_n \cong \T_n  \times [0,\infty)^{n+1}  ,\]
and $\Phyl_1 \cong \T_1 \times [0,\infty)$. 
\end{customthm}  

\noindent To prove this, we give an explicit description of the topology on $\Phyl_n$ in Lemma \ref{lem:topology phylogenetic trees}. We first need to introduce some notation:

\begin{defn}\label{defn:basis phyl}
Let $T$ be any  isomorphism class of $n$-trees with no vertices of arity $0$ or $1$.
We define $U_T$  as follows:
\begin{enumerate}
\item If $T$ is an isomorphism class of $1$-trees, we let  $U_T$  be an open subset of $[0,\infty)$.
\item  If $T$ is an isomorphism class of binary $n$-trees, we have 
\[
U_T=U_1\times \dots \times U_{n-2} \times V_1\times \dots \times V_{n+1}
\]
 with $U_i$ an open subset of $(0,\infty)$ for $i=1,\dots , n-2$  and $V_j$ an open subset of $[0,\infty)$ for $j=1,\dots , n+1$.
\item Otherwise,  $T$ is an isomorphism class of $n$-trees with $k<n-2$ internal edges.
Consider the binary $n$-trees such that by contracting some of their internal edges $e_1,\dots, e_m$ we obtain $T$, where $m=n-2-k$; denote by $\widetilde{T}_1,\dots , \widetilde{T}_a$  the isomorphism classes of such binary trees.   We then define
\[
U_T=U_{T_0}\sqcup U_{\widetilde{T}_1}\sqcup \dots \sqcup U_{\widetilde{T}_a}.
\] 
Here
\[
U_{T_0}=U_1\times \dots \times U_k\times V_1\times \dots \times V_{n+1}
\]
where $U_i$ is an open subset of $(0,\infty)$ for $i=1,\dots , k$ and $V_j$ is an open subset of $[0,\infty)$ for $j=1,\dots, n+1$.  Furthermore, for any $l=1,\dots , a$, 
  \[
 U_{\widetilde{T_l}}=U_{T_0}\times U_{e_1}\times \dots \times U_{e_m}
 \] 
where each set  $U_{e_i}$ is of the form $(0,r_i)$ for some $r_i$ in $(0,\infty)$  for $i=1,\dots , m$. 
\end{enumerate}
\end{defn}

\begin{lemma}\label{lem:topology phylogenetic trees}
The bijection $\psi\maps \Com+[0,\infty)_n\to \Phyl_n$ of Theorem \ref{thm:bijection} endows the space of  phylogenetic $n$-trees with the topology whose basis is given by sets of the  form
\[
\bigsqcup_T U_T
\]
where $T$ ranges over isomorphism classes of $n$-trees with no unary vertices, and the sets $U_T$ are as in Definition \ref{defn:basis phyl}.

\end{lemma}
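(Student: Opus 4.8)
The plan is to read the topology off the coproduct presentation and then match it against Definition~\ref{defn:basis phyl}. Recall from the discussion following Theorem~\ref{thm:coproduct} that $\Phyl_n=(\Com+[0,\infty))_n$ carries the quotient topology induced by the surjection
\[
q\maps F\bigl(U(\Com)+U([0,\infty))\bigr)_n \longrightarrow \Phyl_n ,
\]
and that by Theorem~\ref{thm:C-tree-2} the source is, as a space, a disjoint union $\coprod_\mu P_\mu$ over isomorphism classes $\mu$ of planar $n$-trees with no $0$-ary vertices, where $P_\mu$ is the space of labellings of the unary vertices of $\mu$ by $\{1_\Com\}\sqcup[0,\infty)$ with the product topology; thus $W\subseteq\Phyl_n$ is open iff $q^{-1}(W)\cap P_\mu$ is open for every $\mu$. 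The first thing I would do is cut this down: collapsing each maximal chain of unary vertices by summing its $[0,\infty)$-labels and discarding the $1_\Com$-labels gives continuous maps compatible with $q$, so it suffices to test openness on the orthants $[0,\infty)^{E(\hat\mu)}$, one for each $n$-tree $\hat\mu$ with no unary vertices, where the induced map $\rho_{\hat\mu}\maps[0,\infty)^{E(\hat\mu)}\to\Phyl_n$ sends a length assignment $\ell$ to the phylogenetic tree obtained by contracting the internal edges on which $\ell$ vanishes and forgetting the planar structure. The payoff of this reduction is that on each orthant the edge-lengths of $\rho_{\hat\mu}(\ell)$ are coordinate functions of $\ell$ and the combinatorial type of $\rho_{\hat\mu}(\ell)$ is $\hat\mu$ with its zero-length internal edges contracted.

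Next I would prove that every set $B=\bigsqcup_T U_T$ of the form in Definition~\ref{defn:basis phyl} is open. For a fixed $\hat\mu$, only the finitely many $T$ that arise from $\hat\mu$ by contracting internal edges contribute to $\rho_{\hat\mu}^{-1}(B)$, and one checks that this preimage is a finite union of product boxes in $[0,\infty)^{E(\hat\mu)}$ whose union is open. The crucial feature of Definition~\ref{defn:basis phyl} is exactly what makes this true: if $\ell_0$ has internal edges $e_1,\dots,e_r$ vanishing, so that $\rho_{\hat\mu}(\ell_0)$ has underlying tree $\hat\mu/\{e_1,\dots,e_r\}=:T'$ and lies in the piece $U_{(T')_0}$ of $U_{T'}$, then a nearby $\ell$ with $e_1,\dots,e_r$ small but positive has underlying tree a refinement of $T'$ and lies in a piece $U_{\widetilde T_l}=U_{T_0}\times\prod_i(0,r_i)$ of some $U_{T_0}$ with $T_0\preccurlyeq\hat\mu$. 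The half-open factors $(0,r_i)$ together with the decomposition of each $U_T$ into $U_{(T)_0}$ and its binary refinements are precisely what supply the ``missing boundary faces'' of the boxes; for this to work across all $\hat\mu$ the outer disjoint union must run over \emph{all} $T$ with no unary vertices and the boxes must be chosen coherently (the nested form $U_{\widetilde T_l}=U_{T_0}\times\prod_i(0,r_i)$ is itself part of this coherence). Granting coherent choices, $q^{-1}(B)$ meets every $P_\mu$ in an open set, so $B$ is open.

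Conversely I would show these sets form a basis. Given an open $W$ and a point $[S]\in W$ with underlying $n$-tree $T_0$ (which has no unary vertices, by definition of a phylogenetic tree), $q^{-1}(W)$ meets each orthant $[0,\infty)^{E(\hat\mu)}$ with $\hat\mu\succeq T_0$ in an open set containing the point recording the lengths of $S$ on the edges of $T_0$ and $0$ on the remaining edges of $\hat\mu$. Choosing product-neighborhoods of these points inside $\rho_{\hat\mu}^{-1}(W)$ and shrinking them so that the interval assigned to a given edge of $T_0$ is the same in every relevant $\hat\mu$ — there are only finitely many such $\hat\mu$ once the lengths on $T_0$ are fixed, since each is a contraction of a binary refinement of $T_0$ — one repackages the resulting intervals into the data $U_i,V_j,r_i$ of Definition~\ref{defn:basis phyl} for $T_0$, for each of its binary refinements, and for the intermediate contractions, taking $U_T$ trivial for the remaining $T$. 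This produces $B=\bigsqcup_T U_T$ with $[S]\in B\subseteq W$. The case $n=1$ is immediate: $\Phyl_1\cong[0,\infty)$ and Definition~\ref{defn:basis phyl}(1) says a basic open set is just an open subset of $[0,\infty)$.

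The operad-theoretic reductions in the first paragraph are routine. The substantive bookkeeping — and the step I expect to be the main obstacle — is verifying that the rigid product pieces of Definition~\ref{defn:basis phyl}, summed over all contractions of a given $\hat\mu$, genuinely assemble into an open subset of each orthant (so that $B$ is open), and dually that the neighborhoods extracted from $q^{-1}(W)$ in the various orthants can be chosen compatibly and forced into that rigid product form. Keeping the external edges (never contracted, possibly of length $0$) and the internal edges (contracted exactly at length $0$) straight, and maintaining coherence of all the choices across the refinements of $T_0$, is the delicate point.
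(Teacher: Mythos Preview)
Your outline is correct and takes a genuinely different route from the paper. The paper never makes your reduction to the orthants $[0,\infty)^{E(\hat\mu)}$; instead it works directly in the free operad $F(U(\Com)+U([0,\infty)))_n$, describing the full fibre $\epsilon^{-1}\psi^{-1}(z)$ of a phylogenetic tree $z$ by a list of combinatorial moves (splitting an edge-label $\ell$ into a chain of unary vertices summing to $\ell$, inserting $1_\Com$-vertices, expanding a higher-arity $\Com$-vertex into a subtree with zero-length internal edges, and changing the planar structure). It then writes down, in three cases according to Definition~\ref{defn:basis phyl}, an explicit open box in $F(U(\Com)+U([0,\infty)))_n$ around an arbitrary preimage point and checks it lies inside $\epsilon^{-1}\psi^{-1}(U)$. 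For the converse --- that the proposed basis generates the quotient topology --- the paper argues by contraposition: if $U$ is not open in the proposed topology, some factor $V_{i_s}$ or $\widetilde V_{i_t}$ fails to be open, and then (using $\phi_0=\id$) the corresponding piece of $\epsilon^{-1}\psi^{-1}(U)$ is not open either.

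Your reduction buys a cleaner geometric picture: once you have factored $q|_{P_\mu}=\rho_{\hat\mu}\circ c_\mu$ with $c_\mu$ continuous, all of the move-by-move bookkeeping in the paper's proof collapses to checking openness of product boxes in finitely many Euclidean orthants, and your constructive argument for the basis property is more transparent than the paper's contrapositive. The price is that the reduction itself has to be justified (continuity of the collapsing maps $c_\mu$ and of the sections defining $\rho_{\hat\mu}$); this is indeed routine, but it is an extra step the paper avoids by staying in the free operad. The residual bookkeeping you flag --- making the boxes in the various orthants cohere with the rigid form $U_{\widetilde T_l}=U_{T_0}\times\prod_i(0,r_i)$ of Definition~\ref{defn:basis phyl} --- is real and is essentially the same difficulty the paper handles in its case (3); neither approach escapes it.
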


\begin{proof}
First note that a basis for the topology on $F(\Com+[0,\infty))_n$ is given by sets of the form  
\[
\bigsqcup_{T} W_T
\]
where $T$ ranges over isomorphism classes of planar $n$-trees, and for $T$ having $k$ unary vertices and $m_j$ vertices of arity $j\geq2$, $W_T$ is an open set in 
 \[
 \bigsqcup_{i=0}^k\left([0,\infty)^i\times \Com_1^{k-i}\right)\times \prod_{j}\Com_j^{m_j}.
 \]
The sets of the form $\bigsqcup_T U_T$ as defined in Definition \ref{defn:basis phyl}  are easily seen to satisfy the properties of a basis.

First we show that these sets are open in the quotient topology, namely that for any such set $U=\bigsqcup_T U_T$ its preimage $\epsilon^{-1}\circ \psi^{-1}(U)$ is open in $F(\Com+[0,\infty))_n$, where we write $\epsilon$ instead of $\epsilon_\Com+\epsilon_{[0,\infty)}$.  Let $x\in \epsilon^{-1}\circ \psi^{-1}(U)$. Then there exists $z\in U$ such that $x\in \epsilon^{-1}\circ \psi^{-1}(z)$. 
 Let $\ell_1,\dots , \ell_k$ for ($k\leq n-2$) be the labels of the internal edges of $z$, and $h_1,\dots,h_{n+1}$ the labels of the external edges.  
 To describe the elements of the 
set $\epsilon^{-1}\circ \psi^{-1}(z)$ we introduce the following notation:
 let $\widetilde{z}\in \epsilon^{-1}\circ \psi^{-1}(z)$ be  the unique (up to non-planar isomorphism) $U(\Com)+U([0,\infty))$-tree that is obtained from $z$ by substituting every edge labelled by $\ell$ with the $1$-corolla with its unique vertex labelled by $\ell$ and choosing any planar structure for $\widetilde{z}$.  Recall that we denote by $f_j$ the unique operation of $\Com_j$, for $j$ any natural number. Now  we can obtain all elements of $\epsilon^{-1}\circ \psi^{-1}(z)$ from $\widetilde{z}$ through the following moves:
 
\begin{enumerate}[label=(\alph*)]
\item every subtree of $\widetilde{z}$ consisting of a $2$-ary vertex $v$ labelled by $f_2$ is substituted by a subtree with one vertex of arity $2$ labelled by $f_2$ and $u_v\geq 0$ unary vertices labelled by $f_1$
 \item every subtree of $\widetilde{z}$ consisting of a $j$-ary vertex $v$ labelled by $f_j$ ( for $j>2$) is substituted by a  subtree with $r_v\geq1$ vertices of arity $1\leq j_1,\dots , j_{r_v}\leq j$ labelled by $f_{j_1},\dots ,f_{j_{r_v}}$ with 
 $\sum_{i=1}^{r_v}(j_i -1)+1=j$ and $m_v\geq 0$ unary vertices labelled by $0\in [0,\infty)$
 \item every subtree of $\widetilde{z}$ consisting of a unary vertex $v$ labelled by $\ell\in [0,\infty)$ is substituted by a subtree with $d_\ell\geq 1$ unary vertices labelled by $\ell_1,\dots , \ell_{d_\ell}\in [0,\infty)$ with $\sum_{i=1}^{d_\ell}\ell_i=\ell$ and $m_v\geq 0$ unary vertices labelled by the identity of $\Com$
 \item choose a planar structure for the resulting tree.
 \end{enumerate}

If $y$ is a $U(\Com)+U([0,\infty))$-tree that was obtained from $\widetilde{z}$ through moves (a)--(d), then we also say that its underlying isomorphism class of planar trees $H$ was obtained from $\widetilde{z}$  through moves (a)--(d).  Similarly, if $T$ is the underlying isomorphism class of trees of $z$ we denote by $\widetilde{T}$ the underlying isomorphism class of planar trees of $\widetilde{z}$, and we say that $H$ is obtained from $\widetilde{T}$ through moves (a)--(d).

We next need to introduce some notation.  For $n\ge 1$ let $\phi_n$ be the continuous map
\[
\begin{array}{ccl}		\phi_n\maps [0,\infty)^{n+1} &\to& [0,\infty) \\ (x_1, \dots, x_{n+1})& \mapsto& x_1 + \cdots + x_{n+1} .
\end{array}
\]
We also set $\phi_0$ be the identity on $[0,\infty)$.   If $W$ is open, then $\phi_n^{-1}(W)$ is an open subset of $[0,\infty)^{n+1}$.  

We now consider three cases:
\begin{enumerate}
\item $z\in U_T$, with $U_T$ as in item (1) of Definition \ref{defn:basis phyl}.
\item $z\in U_T$, with $U_T$ as in item (2) of Definition \ref{defn:basis phyl}.
\item $z\in U_T$, with $U_T$ as in  item (3) of Definition \ref{defn:basis phyl}.
\end{enumerate}
In the first case we have
$x\in \phi^{-1}_n(U_T)$, with $U_T$ an open subset of $[0,\infty)$, so $\phi^{-1}_n(U_T)$ is open in $F(\Com+[0,\infty))_1$.

In the second case, $z$ has $n-2$ internal edges. We then have
		\[
		x\in  
		\; \bigsqcup_H
		   \;\prod_{p=1}^{n-2} \phi^{-1}_{d_{\ell_p}}(U_p)\times
		  \prod_{q=1}^{n+1} \phi^{-1}_{d_{h_q}}(V_q) \times
		  \prod_{r=1}^{n-1}\left(\Com_2\times \Com_1^{u_{v_r}}\right).
		\]
Here $H$ ranges over isomorphism classes of planar $n$-trees obtained 
from $\widetilde{z}$ by moves (a), (b), and (d).  
Thus, the disjoint union is taken over all numbers  $d_{\ell_1},\dots , d_{\ell_{n-2}}, d_{h_1},\dots d_{h_{n+1}}\geq 0$ and $u_{v_1},\dots , u_{v_{n-1}}\geq 0$ for $v_1,\dots , v_{n-1}$ the $2$-ary vertices  of $z$. 
This set is open in $F(\Com+[0,\infty))_n$ and is contained in $\epsilon^{-1}\circ \psi^{-1}(U_T)$ and therefore in $\epsilon^{-1}\circ \psi^{-1}(U)$.

For the third case, we have that $z$ has $k<n-2$ internal edges and $k+1$ vertices. Let $b$ be the number of $2$-ary vertices of $z$ and $c=k+1-b$ the number of 		vertices of arity greater than two. We then have:	
	\begin{alignat}{2}
		x \; \in  
		 \;\bigsqcup_H
		 \; &\notag \prod_{p=1}^{k} \phi^{-1}_{d_{\ell_p}}(U_p)\times
		  \prod_{q=1}^{n+1} \phi^{-1}_{d_{h_q}}(V_q) \times
		  \prod_{r=1}^{b}\left (\Com_2\times \Com_1^{u_{v_r}}\right)\times\\
		 &\notag   \prod_{i=1}^{c} \left ( \phi^{-1}_{m_{w_i}}([0,\delta_{w_i}))\times
		 \prod_{s=1}^{r_{w_i}}
		 \Com_{j_{i_s}} \right)
		\end{alignat}				
Here $H$ ranges over isomorphism classes of planar $n$-trees obtained from $\widetilde{z}$ by moves (a)--(d).  Thus, the disjoint union is taken over all numbers  $d_{\ell_1},\dots , d_{\ell_{k}}, d_{h_1},\dots d_{h_{n+1}}\geq 0$ and $u_{v_1},\dots , u_{v_{b}}\geq 0$ for $v_1,\dots ,v_{b}$ the $2$-ary vertices  of $z$, and further for $i=1,\dots , c$ and $w_i$ a vertex of $z$ with arity $2<j_{w_i}$ 
the numbers $r_{w_i}\geq 1$, $m_{w_i}\geq 0$ and $1\leq j_{i_1},\dots, j_{i_{r_{w_i}}}\leq j_{w_i} $ such that $\sum_s (j_{i_s} - 1)+1=j_{w_i}$.
This is an open set in $F(\Com+[0,\infty))_n$, and furthermore by choosing the numbers $\delta_{w_i}$ appropriately, one has that this set is contained in $\epsilon^{-1}\circ \psi^{-1}(U_T)\subseteq \epsilon^{-1}\circ\psi^{-1}(U)$. Therefore the sets of the form $\bigsqcup_T U_T$ as defined in Definition \ref{defn:basis phyl} are open in the quotient topology.

It remains to show that the topology induced by these sets is the quotient topology. So we have to show that if $\epsilon^{-1}\circ \psi^{-1}(U)$ is open in $F(\Com+[0,\infty))_n$, then $U$ is open in $\Phyl_n$. We prove this by contradiction, namely we show that if $U$ is not open in $\Phyl_n$, then $\epsilon^{-1}\circ \psi^{-1}(U)$ is not open in $F(\Com+[0,\infty))_n$.  So suppose that $U\subseteq \Phyl_n$ is not open.
First note that if $T$ is an isomorphism class of  $n$-trees with $k$ internal edges and $m_j$ vertices of arity $j$, then the phylogenetic $n$-trees whose underlying isomorphism class of $n$-trees is $T$ are points in this space:
\[
\mathcal{U}_T =(0,\infty)^k\times [0,\infty)^{n+1}\times \prod_{j}\Com_j^{m_j}.
\]
Thus we can write $\Phyl_n$ as the space
\[
\Phyl_n=\bigsqcup_T \mathcal{U}_T
\]
where $T$ ranges over isomorphism classes of  $n$-trees, and we can write $U$ as 
\[
U=\bigsqcup_T (U \cap \mathcal{U}_T).
\]
Therefore for at least one $T$ the set $U\cap\mathcal{U}_T$ is not open. 
Since the $\Com_j$ are one-point sets, we must have that 
\[
U\cap\mathcal{U}_T=\bigcup_i V_i
\]
with 
\[
V_i=V_{i_1}\times \dots \times V_{i_k}\times\widetilde{V}_{i_1}\times \dots \times \widetilde{V}_{i_{n+1}}\times \prod_j \Com_j^{m_j}
\]
where $V_{i_s}\subseteq (0,\infty)$ for $s=1,\dots , k$ and $\widetilde{V}_{i_t}\subseteq[0,\infty)$ for $t=1,\dots , n+1$, and at least one of the $V_{i_s}$ or $\widetilde{V}_{i_t}$ is not open. 
Furthermore,  supposing that $k=n-2$, we have
\begin{alignat}{2}
\epsilon^{-1}\circ\psi^{-1}(U\cap\mathcal{U}_T)
&\notag= \bigcup_i \epsilon^{-1}\circ \psi^{-1}(V_i)\\  
&\notag= \bigcup_i \bigsqcup_{H}\phi_{d_1}^{-1}(V_{i_1})\times \dots \times \phi_{d_k}^{-1}(V_{i_k})\times \phi_{d_{k+1}}^{-1}(\widetilde{V}_{i_1})\times \dots \times \phi_{d_{k+n+1}}^{-1}(\widetilde{V}_{i_{n+1}})\\
&\notag \quad \times \prod_{r=1}^{n-1}\Com_2\times \Com_1^{u_{v_r}}.
\end{alignat}
Here $H$ ranges over  isomorphism classes of planar  $n$-trees obtained from $\widetilde{T}$ through moves $(a)$, $(b)$ and $(d)$. Thus, the coproduct is taken over all numbers $d_1,\dots, d_{k+n+1}\geq 0$ and $u_{v_1},\dots , u_{v_{n-1}}\geq 0$ for $v_1,\dots , v_{n-1}$ the $2$-ary vertices of $T$.
The case in which $k<n-2$ is similar. Now, suppose that any of the $V_{i_s}$ or $\widetilde{V}_{i_t}$ is not open. Then for $d_s=0$ and $d_t=0$ the set $\phi^{-d_s}(V_{i_s})$ or $\phi^{-d_t}(\widetilde{V}_{i_t})$ is not open, and so $\epsilon^{-1}\circ \psi^{-1}(U\cap \mathcal{U}_T)$ is not open, and hence neither is  $\epsilon^{-1}\circ \psi^{-1}(U)$. This completes the proof.   \end{proof}

\begin{proof}[Proof of Theorem \ref{thm:homeo app}]  
The claim is valid for $n=0$, since there are no metric $0$-trees, nor any phylogenetic $0$-trees.
A phylogenetic $1$-tree must have just one edge, labelled by a number in $[0,\infty)$, while the unique metric $0$-tree has its one edge labelled by zero. Thus there is a bijection between  $\Phyl_1$ and  $\T_1  \times [0,\infty)$, and by the explicit description of the topology on $\Phyl_1$ in Lemma \ref{lem:topology phylogenetic trees} it follows that this bijection is a homeomorphism.

For $n>1$
a phylogenetic $n$-tree gives a metric $n$-tree together with 
an $(n+1)$-tuple of lengths in $[0,\infty)$, namely the lengths labelling the 
external edges of the phylogenetic tree.  Conversely, a metric $n$-tree together 
with an $(n+1)$-tuple of lengths in $[0,\infty)$ gives a phylogenetic tree with these
lengths labelling its external edges.  We thus have a specific bijection between operations of $\Phyl_n$ and elements of  $\T_n  \times [0,\infty)^{n+1}$. We denote this bijection by 
\[
f\maps \Phyl_n\to \T_n  \times [0,\infty)^{n+1}.
\]

We now show that this assignment is a homeomorphism.  By Lemma \ref{lem:topology phylogenetic trees} we know that a basis for the topology on the set of phylogenetic $n$-trees is given by sets of the form $\bigsqcup_T U_T$ where $T$ is an isomorphism class of $n$-trees with no unary vertices and $U_T$ is described by item (2) or (3) in Definition~\ref{defn:basis phyl}. On the other hand, a basis for the topology on $\T_n$ is given by the balls $B(x,\epsilon)=\{y\in \T_n | d(x,y)< \epsilon\}$ for any $\epsilon>0$ and any  $x\in \T_n$. 

We first show that the bijection $f\maps \Phyl_n\to \T_n\times [0,\infty)^{n+1}$ is continuous.  Let $B\subseteq \T_n\times [0,\infty)^{n+1}$ be a basic open set. Then $B$ is of the form $W\times V$ with $W$ open in $\T_n$ and $V$ open in $[0,\infty)^{n+1}$. Let $x\in W$ and $y\in V$. Then there exists  a ball $B(x,\epsilon)$ such that $x\in B(x,\epsilon)\subseteq U$, and an open rectangle $r_y$ such that $y\in r_y \subseteq V$.

First, suppose that $x$ lies in the interior of an $(n-2)$-dimensional orthant.  The orthant corresponds to the isomorphism class of some binary $n$-tree $T$.
Let $\ell_1,\dots , \ell_{n-2}$ be the labels of the internal edges of $x$. Then for $\delta$ small enough, the set
\[
R=(\ell_1-\delta,\ell_1+\delta)\times \dots \times (\ell_{n-2}-\delta,\ell_{n-2}+\delta)\times r_y
\]
is such that 
\[
\begin{array}{ccl}
f^{-1}((x,y))
&\in & \{T\}\times R\\
& \subseteq & f^{-1}(B(x,\epsilon)\times r_y)\\
& \subseteq & f^{-1}(B)
\end{array}
\]
and $\{T\}\times R$ is a basic open set of $\Phyl_n$ satisfying item (2) of Definition \ref{defn:basis phyl}.

Now suppose that $x$ lies on the boundary of one or more $(n-2)$-dimensional orthants. Then this boundary corresponds to the isomorphism class of an $n$-tree $T$ with $k$ internal edges. Let $\ell_1,\dots , \ell_k$ denote the labels of the internal edges of $x$, and let $\widetilde{T}_1,\dots , \widetilde{T}_a$ be the isomorphism classes of binary $n$-trees corresponding to the $a$ neighboring $(n-2)$-dimensional orthants. 

Let $R$ denote the open rectangle
\[
(\ell_1-\delta,\ell_1+\delta)\times \dots \times (\ell_k-\delta,\ell_k+\delta)\times r_y.
\]
Then for $\delta$ small enough the set
\[
Q=\left(\{T\}\times R \right)\;\cup
 \bigcup_{i=1}^a
\left(\{\widetilde{T}_i\}\times R \times \underset{n-2-k\text{ times}}{\underbrace{(0,\delta)\times \dots \times (0,\delta)}}\right).
\]
has
\[
\begin{array}{ccl}
f^{-1}((x,y)) &\in& Q \\
&\subseteq & f^{-1}(B(x,\epsilon)\times r_y) \\
&\subseteq&  f^{-1}(B) .
\end{array}
\]
Furthermore, $Q$ is a basic open set of $\Phyl_n$ satisfying item (3) of Definition \ref{defn:basis phyl}. Therefore $f$ is continuous.

It remains to show that $f$ is open. For this, let $U\subseteq \Phyl_n$ be an open set, and let $z\in U$, and $(x,y)=f(z)$ with $x\in \T_n$ and $y\in [0,\infty)^{n+1}$.   First suppose that $z$ is binary. Let $\ell_1,\dots , \ell_{n-2}$ denote the labels of the internal edges of  $z$, and $h_1,\dots , h_{n+1}$ the labels of the external edges. For  all $i=1,\dots , n+1$ we set $V_i= (h_i-\delta,h_i+\delta)$ if $h_i\ne 0$ and $V_i=[0,\delta)$ otherwise. Then  the set  
\[  
\begin{array}{ccl}
f^{-1}((x,y))
&\in & \{T\}\times R\\ 
& \subseteq & f^{-1}(B(x,\epsilon)\times r_y)\\ 
& \subseteq & f^{-1}(B)
\end{array}
\]
 is a basic open set of $\Phyl_n$ which is a neighborhood of  $z$ and, for $\delta$ small enough, contained in $U$. Thus we have   
\[  
\begin{array}{ccl}
f(z)
&\in & B(x,\begin{frac}{\delta}{2}\end{frac})\times V_1\times \dots \times V_{n+1}\\
&\subseteq & f(R)\\
&\subseteq & f(U).
\end{array}
\]
Now suppose that $z$ is not binary.  That is, suppose the underlying isomorphism class $T$ of $n$-trees of $z$ does not contain binary trees.   Let $\widetilde{T}_1,\dots , \widetilde{T}_a$ be the isomorphism classes of binary $n$-trees corresponding to $T$ (as defined in Definition  \ref{defn:basis phyl}). 
Let $\ell_1,\dots , \ell_k$ denote the labels of the internal edges of $z$, and $h_1,\dots , h_{n+1}$ the labels of the external edges. Similarly as before, we define 
$V_i= (h_i-\delta,h_i+\delta)$ if $h_i\ne 0$ and $V_i=[0,\delta)$ otherwise,
for all $i=1,\dots , n+1$. Denote by $R$ the open rectangle 
\[
(\ell_1-\delta,\ell_1+\delta)\times \dots \times (\ell_k-\delta,\ell_k+\delta)\times V_1\times \dots \times V_{n+1}.\]
 Then the set
\[
Q=\{T\}\times R\; \cup \bigcup_{i=1}^a \left(\{\widetilde{T}_i\}\times  R
 \times \underset{n-2-k \text{ times }}{\underbrace{(0,\delta)\times \dots \times (0,\delta)}}\right)
\]
is a basic open set in $\Phyl_n$ which is a neighborhood  of $z$ and, for $\delta$ small enough, contained in $U$.  Finally, we have  
\[
\begin{array}{ccl}
f(z)
&\in & B(x,\begin{frac}{\delta}{2}\end{frac})\times V_1\times \dots \times V_{n+1}\\
&\subseteq&  f(Q)\\
&\subseteq & f(U).  
\end{array}
\]
Therefore $f$ is open. This completes the proof of Theorem  \ref{thm:homeo}.
\end{proof}


\end{document}